\documentclass{amsart}
\usepackage[utf8]{inputenc}
\usepackage{a4wide}
\usepackage{amssymb,exscale,xpatch,mathrsfs,nicefrac,dutchcal,tikz,pgfplots, booktabs, array, float}
\usepackage[hidelinks]{hyperref}
\usepackage{comment}

\newcommand{\un}{\ensuremath{\mathbf{1}}}

\pgfplotsset{width=6cm}

\numberwithin{equation}{section}
\theoremstyle{plain}
\newtheorem{theorem}{Theorem}[section]
\newtheorem{theorem*}{Theorem}
\newtheorem{lemma}[theorem]{Lemma}
\newtheorem{corollary}[theorem]{Corollary}

\newtheorem{proposition}[theorem]{Proposition}

\theoremstyle{definition}

\newtheorem{example}[theorem]{Example}

\theoremstyle{remark}
\newtheorem{remark}[theorem]{Remark}

\theoremstyle{plain}
\newtheorem{assumptionalt}{Assumption}
\newenvironment{assumptionp}[1]{
  \renewcommand\theassumptionalt{#1}
  \assumptionalt
}{\endassumptionalt}

\renewcommand{\arraystretch}{1.4} 

\newcommand{\dst}{\displaystyle}

\newcommand{\N}{\mathbb{N}}
\newcommand{\Z}{\mathbb{Z}}

\newcommand{\R}{\mathbb{R}}
\newcommand{\RR}{\mathbb{R}}
\newcommand{\C}{\mathbb{C}}

\newcommand{\T}{\mathbb{T}}

\newcommand{\ui}{\mathrm{i}}

\newcommand{\eps}{\varepsilon}
\newcommand{\ffi}{\varphi}

\renewcommand{\d}{\,\mathrm{d}}
\newcommand {\sfrac}[2] { {\,{}^{#1}\!\!/\!{}_{#2}\,}} 
\newcommand {\onehalf} {\sfrac{1}{2}}

\let\Re\relax\DeclareMathOperator{\Re}{Re}
\let\Im\relax\DeclareMathOperator{\Im}{Im}

\newcommand{\veps}{\varepsilon}

\newcounter{aufzi}
\newenvironment{aufzi}{\begin{list}{ {\upshape\alph{aufzi})}}{
        \usecounter{aufzi}
        \topsep1ex
        \parsep0cm
        \itemsep1ex
        \leftmargin0.8cm
        \labelwidth0.5cm
        \labelsep0.3cm
}}
{\end{list}}

\newcounter{aufzii}

\newcounter{aufziii}

\colorlet{yunlei}{purple}

\def\cc{{\mathcal C}}

\def\ee{{\mathcal E}}
\def\ff{{\mathcal F}}

\def\mm{{\mathcal M}}

\def\pp{{\mathcal P}}

\newcommand{\norm}[1]{{\left\|{#1}\right\|}}
\newcommand{\ent}[1]{{\left[{#1}\right]}}
\newcommand{\abs}[1]{{\left|{#1}\right|}}
\newcommand{\scal}[1]{{\left\langle{#1}\right\rangle}}

\newcommand{\proofskip}{\par\smallskip\noindent}

\begin{document}

\title[Curved Ingham Inequalities]{Curved Ingham inequalities and
  observability of the toroidal Schrödinger equation}

\author[Bernhard H. Haak]{Bernhard H. Haak}
\address{Univ. Bordeaux, CNRS, Bordeaux INP, IMB, UMR 5251, F-33400 Talence, France}
\email{bernhard.haak@math.u-bordeaux.fr}
\author[Philippe Jaming]{Philippe Jaming}
\address{Univ. Bordeaux, CNRS, Bordeaux INP, IMB, UMR 5251, F-33400 Talence, France}
 \email{philippe.jaming@math.u-bordeaux.fr} 
\author{Ming Wang}
\address{School of Mathematics and Statistics, HNP-LAMA, Central South University, Hunan, Changsha,
410083, PR China}
\email{m.wang@csu.edu.cn}
\author[Yunlei Wang]{Yunlei Wang}
\address{Department of Mathematics, Louisiana State University, Baton Rouge, LA 70803, USA}
\email{ywang30@lsu.edu}

\subjclass[2020]{Primary 35Q41; Secondary 93B07, 93B05, 42A20,  35B37, 35A23}

\keywords{Schrödinger equation; observability;   non-harmonic Fourier series;  stationary phase. }

\date{\today}

\begin{abstract}
  We prove that solutions of the toroidal Schrödinger equation can be
  observed from suitably curved space–time trajectories, thus of zero
  Lebesgue measure. To do so, we establish new upper and lower bounds
  for certain trigonometric sums along curves, in the spirit of the
  celebrated Ingham inequality.

  In a second part, we establish observability properties over
  arbitrarily short curves of the low- and high-frequency components
  separately. For the low-frequency component, we establish strong
  restrictions on the zero sets of the trigonometric sums under
  consideration.
\end{abstract}

\maketitle
\tableofcontents

\allowdisplaybreaks


\section{Introduction}

Can solutions of dispersive equations be observed from sets of zero
Lebesgue measure? On which curves can non-trivial trigonometric sums
vanish? These questions form the central themes of this article.

When the curves are segments, these questions can be addressed using
classical non-harmonic Fourier series, namely Ingham's inequality.  To
handle more general curves, we introduce a new family of inequalities
that exploits curvature conditions.

This study reveals a mechanism in which long-time coherence of frequency packets competes with dispersion. This interaction governs the behavior of trigonometric sums when restricted to curved space–time trajectories and plays a central role in the observability results obtained in this paper; see
Theorems~\ref{thm:schroedinger} and \ref{thm:highfreq}.

Although we focus on the fractional Schrödinger equation on the torus,
the arguments developed in this paper highlight a mechanism based on
the interplay between dispersion and frequency clustering.  We expect
that similar questions arise for other dispersive models when
observability is restricted to sets of low geometric dimension, but we
do not pursue this direction here.

\subsection{Observability of toroidal Schrödinger equations}
This article deals with a special situation that arises in the study
of the Schrödinger equation with bounded potential $V$ on $\T^d$,
where $\T=\R/\Z$,
\begin{equation}    \label{eq:schrtd}
    \left\{\begin{array}{l}
       \ui\, \partial_tu(t,x)= (-\Delta_x +V(t,x))\, u(t,x) \qquad t>0,x\in\T^d, \\[0.5ex]
        u(0,\cdot)=u_0(\cdot)\in L^2(\T^d).
    \end{array}\right.
\end{equation}
For \eqref{eq:schrtd} one is interested in observability
from subsets $\omega\subset[0,T]\times\T^d$ ($T>0$), that is,
existence of a constant $C_{\text{obs}}>0$ such that, for every
$u_0\in L^2(\T^d)$, the solution $u$ of \eqref{eq:schrtd} satisfies
\begin{equation}    \label{eq:obsschrtd}
     \|u_0\|_{L^2(\T^d)}\leq  C_{\text{obs}} \, \|u\un_\omega\|_{L^2(\R^+\times\T^d)}.
\end{equation}
A duality argument allows an equivalent formulation in terms of
controllability from $\omega$, see e.g  the ``Hilbert Uniqueness Method''
in \cite{Li1,Li2}, and also \cite{Ben} or \cite[Theorem~11.2.1]{TW09}.
That is, for every
$u_0,u_1\in L^2(\T^d)$, there exists a control
$h\in L^2([0,T]\times\T^d)$ such that
\[
    \left\{\begin{array}{l}
        \ui\, \partial_tu(t,x)= (-\Delta_x +V(t,x))\, u(t,x) +h(t,x)\un_{\omega}(t,x) \qquad t>0,x\in\T^d,\\[0.5ex]
             u(0,\cdot)=u_0(\cdot),\\
             u(T,\cdot)=u_1(\cdot).
    \end{array}\right.
\]
Observability of Schrödinger equations on compact manifolds is well
understood from open sets, following foundational works of Haraux,
Jaffard, Komornik, and later microlocal approaches of Burq–Zworski and
Anantharaman–Macià, see
\cite{Haraux,Jaf1988,Jaffard-1990,Kom,BurqZworski,AnanthMacia}.
Observability from arbitrary sets of positive measure has very
recently been proved by Burq-Zhu \cite{BurqZhu1,BurqZhu2}. A more
detailed overview of these developments is provided in
Section~\ref{sec:bib-table} in the Appendix.

The aim of our paper is to study observability inequalities from
certain space--time curves, thus estabishing observability from
certain sets of measure zero.

\subsection{Vanishing of trigonometric sums}
The main difference between observability inequalities along line
segments from \cite{JamingKomornik} and along power curves is that in
the former case, no minimal interval length is required while in the
latter, a minimal curve length is required.
It turns out that when $V=0$ and the
initial condition $u_0$ is a trigonometric polynomial of {\em fixed} degree
$N$, then no minimal length is needed.  Actually, in this case, the
solution $u$ of \eqref {eq:schrtd} is
\[
   u(t,x)=\sum_{|n|\leq N}c_ne^{2\ui \pi\bigl(nx+|n|^2t\bigr)}
\]
and what we want to prove is an inequality of the form
\[
\Bigl(\sum_{|n|\leq N}|c_n|^2\Bigr)^{\onehalf }\leq C\Bigl(
\int_\Gamma \; 
\Bigl| \sum_{|n|\leq N}c_ne^{2\ui\pi\bigl(nx+|n|^2t\bigr)} \Bigr|^2
\,\mbox{d}\sigma_\Gamma\Bigr)^{\onehalf }
\]
with $\mbox{d}\sigma_\Gamma$ the arc length measure on $\Gamma$. This
looks like a comparison of norms inequality in a finite dimensional
vector space. So the first step is to prove that the right hand side
is indeed a norm. This will be the case if (and only if) the only
trigonometric sum
\begin{equation}  \label{eq:reihe-n-quadrat}
\sum_{|n|\leq N}c_ne^{2\ui\pi\bigl(nx+|n|^2t\bigr)}
\end{equation}
that vanishes on $\Gamma$ is trivial, i.e. corresponding to $c_n=0$
for all $n$. In other words, in the non-trivial case, $\Gamma$ cannot
lie in the zero set of such a sum.

Zero sets of trigonometric polynomials are well studied objects, see
e.g.  \cite{BourgainRudnik,BurqGermainSorellaZhu,DL23,GermainMoyanoZhu} and
references therein. Most results deal with trigonometric sums that are
eigenfunctions of the Laplacian and are thus sums of the form
\[
   \sum_{m^2+|n|^2=N^2}c_{m,n} e^{2\ui\pi\bigl(m t+ \langle n,x \rangle \bigr)}.
\]
Thus, the sequence $(c_n)$ is supported on a sphere
$\{(m,n)\in\Z\times \Z^d\,:\ m^2+|n|^2=N^2 \}$.  In our case however,
the sequence is not supported on a sphere, but on a parabola, and much
less seems to be known in this case.  We will show that a continuous
curve $\Gamma$ within the zero set of a non-trivial sum
\eqref{eq:reihe-n-quadrat} satisfies very restrictive properties.  In
particular, such a curve needs to be analytic, but cannot be entire.

\subsection{Main results}
Let us now describe our results precisely.  Let $\T = \R / a\Z$ with $a>0$.
For brevity of notation,
we take $a=1$, all results can easily be extended to arbitrary $a$. 
We consider the one-dimensional fractional toroidal Schrödinger
equation
\begin{align}\label{eq:fraction-schroedinger}
\begin{cases}
    \frac{\ui}{2\pi}\partial_tu+\frac{1}{(2\pi)^s}|\partial_x|^su=0, \quad x\in \T,t\in \R,\\
    u(0,x)=u_0,    
\end{cases}
\end{align}
with initial data $u_0\in L^2(\T)$.
Writing $u_0$ as its Fourier series with coefficients $(c_n)_{n \in \Z}$,
its  solution 
is given by
\begin{equation}
\label{eq:solschr}
u(t,x)=\sum_{n\in\Z}c_ne^{2\ui\pi (nx+|n|^st)}.
\end{equation}
The focus of this paper is on observability inequalities for
\eqref{eq:fraction-schroedinger} that is, inequalities of the form
\begin{equation}\label{eq:obsintro}
\int_{\T} \Bigl| \sum_{n\in\Z}c_ne^{2\ui\pi nx} \Bigr|^2\,\mbox{d}x
\leq C\int_{[0,T]\times\T} \Bigl|\sum_{n\in\Z}c_ne^{2\ui\pi (nx+|n|^st)} \Bigr|^2\,\mbox{d}\mu(x,t)
\end{equation}
for every $(c_n)\in\ell^2(\Z)$ and where $C$ is a constant that depends
only on the measure $\mu$. As already explained, results of this form
are known when $\mu=\un_\omega\,\mbox{d}t\,\mbox{d}x$ is the
Lebesgue measure restricted to a set of positive measure.  Our aim
here is to restrict \eqref{eq:obsintro} to even smaller sets, namely
smooth curves in the plane, of course replacing the Lebesgue measure
$\mathrm{d}t\d x$ by arc length.  When the curves are straight lines,
this was investigated by the second author with V. Komornik
\cite{JamingKomornik} and the proofs were based on the celebrated
Ingham inequality \cite{Ingham}, that we recall for sake of comparison
with our results:

\begin{theorem}[Ingham's Inequality]
  Let $\gamma>0$ and $(\lambda_n)_{n\in\Z}$ be a sequence of real
  numbers with $\lambda_{n+1}-\lambda_n\geq \gamma$.  Then, for every
  $T>0$, there exists $C(T)$ such that, for every
  $(c_n)_{n\in\Z}\in\ell^2$,
\begin{equation}
  \label{eq:inghamd}
  \frac{1}{T}\int_0^T \Bigl| \sum_{n\in\Z}c_ne^{2\ui\pi\lambda_n t} \Bigr|^2\d t\leq C(T)\sum_{n\in\Z}|c_n|^2.
\end{equation}
Further, if $T>\dfrac{1}{\gamma}$, there exists $\tilde C(T)$ such that, for
every $(c_n)_{n\in\Z}\in\ell^2$,
\begin{equation}
  \label{eq:inghami}
  \frac{1}{T}\int_0^T\Bigl|\sum_{n\in\Z}c_ne^{2\ui\pi\lambda_n t} \Bigr|^2\d t\geq \tilde C(T)\sum_{n\in\Z}|c_n|^2.
\end{equation}
\end{theorem}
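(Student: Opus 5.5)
The plan is the classical kernel method: compare the $L^2$ norm on $[0,T]$ with a weighted integral against an auxiliary function whose Fourier transform is localized, so that off-diagonal terms can be bounded using the gap $\gamma$. By rescaling $t\mapsto t/T$ and replacing $\lambda_n$ by $T\lambda_n$ (gap $T\gamma$), we may assume the length is fixed. We also work first with finitely supported sequences $(c_n)$ and then pass to $\ell^2$ by density; the upper bound guarantees the series converges in $L^2(0,T)$.

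\emph{Upper bound \eqref{eq:inghamd}.} Pick a nonnegative $k\in L^1(\R)$ with $k\geq 1$ on $[0,T]$ and with $\widehat{k}$ supported in $[-\gamma/2,\gamma/2]$. For instance, take a translate and dilate of the Fejér-type function $\bigl(\tfrac{\sin \pi\delta t}{\pi \delta t}\bigr)^2$, multiplied by a suitable constant, with $\delta$ small. Then
\[
\int_0^T\Bigl|\sum c_ne^{2\ui\pi\lambda_nt}\Bigr|^2\d t\leq \int_\R k(t)\Bigl|\sum c_ne^{2\ui\pi\lambda_nt}\Bigr|^2\d t=\sum_{n,m}c_n\overline{c_m}\,\widehat{k}(\lambda_m-\lambda_n).
\]
The support condition kills every term with $n\neq m$, since $|\lambda_m-\lambda_n|\geq\gamma$. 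The expression therefore equals $\widehat{k}(0)\sum|c_n|^2$.

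Without a bounded-support transform, one can instead take $k$ with $|\widehat{k}(\xi)|\lesssim (1+|\xi|)^{-2}$. The off-diagonal terms are then controlled by Schur's test, because $\sum_{m\neq n}(1+|\lambda_m-\lambda_n|)^{-2}\leq 2\sum_{j\geq1}(1+j\gamma)^{-2}$ uniformly in $n$. Either way, the upper bound holds for every $T>0$.

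\emph{Lower bound \eqref{eq:inghami}.} This is the main step. We use Ingham's kernel on the centered interval $[-T/2,T/2]$; the translation only multiplies $c_n$ by unimodular factors. Set
\[
k(t)=\cos\!\left(\frac{\pi t}{T}\right)\un_{[-T/2,T/2]}(t).
\]
Since $0\leq k\leq 1$ on the interval, we have
\[
\int_{-T/2}^{T/2}|f|^2\geq\int k|f|^2=\sum_{n,m}c_n\overline{c_m}\,\widehat{k}(\lambda_m-\lambda_n).
\]
A direct computation gives
\[
\widehat{k}(\xi)=\frac{2T\cos(\pi T\xi)}{\pi\,(1-4T^2\xi^2)}.
\]
In particular $\widehat{k}(0)=2T/\pi$, and for $|\xi|\geq\gamma$ we have $|\widehat{k}(\xi)|\leq \frac{2T}{\pi(4T^2\xi^2-1)}$, which is finite because $T\gamma>1/2$.

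By $|c_n\overline{c_m}|\leq\frac12(|c_n|^2+|c_m|^2)$ and symmetry, the off-diagonal part is bounded by $\sum_n|c_n|^2\sum_{m\neq n}|\widehat{k}(\lambda_m-\lambda_n)|$. The gap condition gives $|\lambda_m-\lambda_n|\geq |m-n|\gamma$, so this is at most
\[
\sum_n|c_n|^2\cdot\frac{4T}{\pi}\sum_{j\geq1}\frac{1}{4T^2\gamma^2j^2-1}.
\]
Using the telescoping identity $\sum_{j\ge1}\frac1{4j^2-1}=\frac12$ and monotonicity in $T\gamma$, the last sum is strictly less than $\frac{1}{2T^2\gamma^2}$. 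This is where $T>1/\gamma$ enters: with $T\gamma>1$, one checks that $\frac{4T}{\pi}\sum_{j\ge1}(4T^2\gamma^2j^2-1)^{-1}<\frac{2T}{\pi}$. Concretely, since $4T^2\gamma^2j^2-1>4j^2-1$ when $T\gamma>1$, the sum is $<\tfrac12$. Hence
\[
\int|f|^2\geq \frac{4T}{\pi}\Bigl(\frac12-\sum_{j\ge1}\frac1{4T^2\gamma^2j^2-1}\Bigr)\sum|c_n|^2,
\]
and the constant in parentheses is positive. Dividing by $T$ gives $\tilde C(T)$.

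The only delicate point I expect is this final bookkeeping: checking that the strict gap $T>1/\gamma$ (rather than equality) yields a strictly positive constant. The density and normalization issues are routine.
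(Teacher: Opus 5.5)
Your argument is correct. It is the classical proof from Ingham's paper.

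\emph{Upper bound.} You use a nonnegative majorant of $\un_{[0,T]}$ whose Fourier transform vanishes off $(-\gamma,\gamma)$. Your Fej\'er choice works with $\delta\le\min(\gamma/2,1/T)$ and the factor $\pi^2/4$, since $\frac{\sin x}{x}\ge\frac{2}{\pi}$ for $|x|\le\frac{\pi}{2}$.

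\emph{Lower bound.} You use the minorant $\cos(\pi t/T)\un_{[-T/2,T/2]}$. Your formula $\widehat k(\xi)=\frac{2T\cos(\pi T\xi)}{\pi(1-4T^2\xi^2)}$ is right, as is the factor $2$ coming from $m=n\pm j$. The closing comparison $4T^2\gamma^2j^2-1>4j^2-1$, together with $\sum_{j\ge1}(4j^2-1)^{-1}=\frac12$, gives a strictly positive constant exactly when $T\gamma>1$. Your intermediate claim that the sum is below $\frac{1}{2T^2\gamma^2}$ is also true for $T\gamma>1$, because $T^2\gamma^2(4j^2-1)<4T^2\gamma^2j^2-1$. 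However, it does not follow from monotonicity alone, and you do not need it.

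The paper gives no proof of this statement; it only quotes it from \cite{Ingham} as a benchmark. The natural comparison is with the paper's proof of Theorem~\ref{thm:schroedinger}. That proof uses the same Gram-matrix scheme, $T\sum|c_n|^2$ plus $\sum_{n\neq m}c_n\overline{c_m}I_{n,m}$ with an unweighted Schur bound, but it integrates against the sharp cutoff $\un_{[0,T]}$.

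In the linear setting of Ingham's theorem, the sharp cutoff would fail. The estimate $\bigl|\int_0^Te^{2\ui\pi\xi t}\,\mathrm{d}t\bigr|\le\frac{1}{\pi|\xi|}$ only bounds the off-diagonal entries by $(\pi\gamma|n-m|)^{-1}$, which is not summable in $m$. So your weights are indispensable, and they are also what produce the sharp time $1/\gamma$.

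The paper can afford the sharp cutoff because the phase gaps $\bigl||n|^s-|m|^s\bigr|$ grow with $|n|$, which makes the relevant row sums finite (Lemma~\ref{lem:sum-unified}). The price is that the diagonal term $T$ must overpower off-diagonal contributions of size $O\bigl(T^{\frac{1-\eta}{2}}+1\bigr)$. This yields only an unspecified large $T(p)$ rather than an explicit threshold.
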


\noindent The constants $C(T)$ and $\tilde C(T)$ are explicit, and the
condition $T>\dfrac{1}{\gamma}$ is sharp in general.  However, Kahane
\cite{Kahane} and \cite{Haraux} proved that when the frequencies have
gaps growing to infinity, i.e.  $\lambda_{k+1}{-}\lambda_k\to+\infty$,
then \eqref{eq:inghami} is true for any $T>0$. We refer the reader to
{\it e.g.} the survey \cite{JamingSaba} for more on this subject.  The
idea in \cite{JamingKomornik} is that the restriction of a series
\eqref{eq:solschr} to a line segment is a non-harmonic Fourier series
that can be rewritten as
\[
  u(t_0+t,x_0+at)
  =\sum_{n\in\Z}c_ne^{2\ui\pi(nx_0+|n|^st_0)}e^{2\ui\pi (an+|n|^s)t}
  =\sum_{j=0}^{+\infty}d_je^{2\ui\pi\lambda_j t}
\]
where $(\lambda_j)_{j\geq 0}$ is the increasing rearrangement of the
sequence $(an+|n|^s)_{n\in\Z}$.  When $s>1$, for most parameters $a$,
this rearrangement is one-to-one and has gaps growing to infinity.
The $d_j$'s here are the corresponding rearrangement of the
$c_ne^{2\ui\pi(nx_0+|n|^st_0)}$'s.  A direct application of Ingham's
Inequality shows therefore, that for any $T>0$, there exists $A(T),B(T)$
such that
\[
A(T)\sum_{n\in\Z}|c_n|^2\leq
\int_0^T\Bigl| \sum_{n\in\Z}c_ne^{2\ui\pi \bigl(n(x_0+at)+|n|^s(t_0+t)\bigr)} \Bigr|^2\d t
\leq B(T)\sum_{n\in\Z}|c_n|^2.
\]
These inequalities contain two pieces of information: the upper bound shows
that the trace of the series \eqref{eq:solschr} on a line segment
defines an $L^2$ function, while the lower bound shows that this trace
controls the $\ell^2$ norm of the coefficients $(c_n)_n$. In terms of
the Schrödinger equation, this means that the solution of
\eqref{eq:fraction-schroedinger} has a trace along (almost) any line
segment and that this trace controls the initial data.

Our aim here is to replace segments by more general curves
$\Gamma=\bigl\{\bigl(t,p(t)\bigr),\ t\in[0,T]\bigr\}$ endowed with the
arc-length measure $\mathrm{d}\sigma=\sqrt{1+\gamma'(t)^2}\,\mathrm{d}t$.
The prototype we have in mind are power curves $(t,t^\alpha)$ with
$\alpha>1$. The first difficulty that occurs is that
\[
u\bigl(t,p(t)\bigr)=\sum_{n\in\Z}c_ne^{2\ui\pi \bigl(np(t)+|n|^st\bigr)}
\]
is not a (non-harmonic) Fourier series. So far, exponential sums of
this type fall outside the scope of existing non-harmonic Fourier
theory. We establish properties of the system of
functions $(\mathcal{E}_n)_{n\in I}$, $I\subset\Z$ where
\[
\mathcal{E}_n\,: \left\{ \begin{matrix}[0,T]&\mapsto&\C\\
t&\to&e^{2\ui\pi \bigl(np(t)+|n|^st\bigr)}\end{matrix} \right.
\]
In particular, we show that this family is a Riesz system in
$L^2([0,T],\mathrm{d}\sigma)$.  Note also that when considering only a
finite family $(\mathcal{E}_n)_{|n|\leq N}$ this is equivalent to the
linear independence of this family.
%

\proofskip In order to state our results, let us introduce the family of curves that we will consider.

\medskip

\begin{assumptionp}{${\rm \mathbf{(H_\alpha)}}$}\label{ass:H-alpha}
  Let $p\,:[0,\infty)\to \R$ be a function in
  $\cc^1([0,\infty[)\cap \cc^2(]0,\infty[)$ and $\alpha>1$.  We will say
  that $p$ satisfies Assumption~\ref{ass:H-alpha} with parameters
  $0<c_1<c_2$ and $c_3>0$ if, for every $t\geq 0$,
    \begin{equation}\label{ass:alpha-1}
        c_1 \, t^{\alpha-1} \le |p'(t)|\le c_2\, t^{\alpha-1},
\end{equation}
and, for every $t>0$,
\begin{equation}\label{ass:alpha-2}
    |p''(t)|\ge c_3 \, t^{\alpha-2}.
\end{equation}
\end{assumptionp}

\medskip

\begin{example}\label{example1}
  A typical simple example of such a curve is given by
  $p(t)=a+b\,t^\alpha$.  More generally, if $p$ is a so-called M\"untz
  polynomial, $P(t)=a_0+a_1t^{\alpha_1}+\cdots+a_nt^{\alpha_n}$ with
  $0<\alpha_1<\cdots<\alpha_n=\alpha$ then there exists $t_0$ such
  that $p(t)=P(t_0+t)$ satisfies  Assumption~\ref{ass:H-alpha}
  with $c_1=\dfrac{\alpha|a_n|}{2}$, $c_2=\dfrac{3\alpha|a_n|}{2}$ and $c_3=\dfrac{\alpha(\alpha-1)|a_n|}{2}$.
  \proofskip
  Further examples are given by $p(t)=a+\eta(t)t^{\alpha}$
  where $\eta$ is a smooth function such that
  $c_1\leq \alpha\eta(t)+t\eta'(t)\leq c_2$ and
  $\alpha(\alpha-1)\eta(t)+2\alpha t\eta'(t)+t^2\eta''(t)\geq
  c_3$. The case $\alpha=3$ and
  $\eta(t)=\dfrac{1}{3}\bigl(1+\dfrac{2}{\pi}\arctan t\bigr)$ is shown
  in Figure~\ref{fig:excurve}a) (left). Then the corresponding
  function $p$ satisfies Assumption~\ref{ass:H-alpha} with $\alpha=3$,
  $c_1=1$, $c_2=2$ and $c_3=2$. 

  \proofskip More generally, it is easy
  to construct functions $\eta$ satisfying those constraints.  It is
  enough to start with a smooth function $\ffi$ such that $\ffi(0)=0$
  while
  $\dst\lim_{t\to+\infty}\ffi(t)=\lim_{t\to+\infty}
  t\ffi'(t)=\lim_{t\to+\infty} t^2\ffi''(t)=0$.  Then, for $\beta$
  small enough, $\eta=1+\beta\ffi$ will fulfill all conditions. One
  such $\ffi$ is shown in Figure~\ref{fig:excurve}b) (right).
  \begin{figure}[ht]
      \centering
\begin{tikzpicture}[baseline]
  \begin{axis} [axis lines=center]
    \addplot [domain=0:3, smooth, thick] { (1+atan (x)/90)*x^3/3};
     \addplot [domain=0:3, smooth, thick,blue] {x^3/3};
     \addplot [domain=0:3, smooth, thick,blue] {2*x^3/3};
  \end{axis}
  \end{tikzpicture}\hskip 12pt
\begin{tikzpicture}[baseline]
  \begin{axis} [axis lines=center]
    \addplot [domain=0:5, smooth, thick] { pi*(atan (x)-2*atan(2*x)+atan(4*x))/100};
  \end{axis}
\end{tikzpicture}
      \caption{Illustration of Assumption~\ref{ass:H-alpha} in Example~\ref{example1}.\\
      a) Left hand side: the function 
      $p(t)=\dfrac{1}{3}\left(1+\frac{2}{\pi}\arctan t\right)t^3$
      and $c \tfrac{t^3}3$ for $c=1,2$ in blue in comparison.\\
      b) Right hand side: $\eta(t)=\arctan t-2\arctan 2t+\arctan 4t$.}
      \label{fig:excurve}
  \end{figure}
\end{example}

We can now state our first result:

\begin{theorem}[Curved Ingham Inequality]
\label{thm:schroedinger}
Let $\alpha>1$, $s> \dfrac{3}{2}$. Let
$p\in \cc^1([0,\infty[)\cap \cc^2(]0,\infty[)$ satisfying
Assumption~\ref{ass:H-alpha}. Then, for every $T>0$ there exists a
constant $C(T)$ such that, for every sequence
$(c_n)_{n\in\Z}\in\ell^2$, we have
\begin{equation}\label{eq:obs-curve-schroedinger-upper}
\int_0^T\abs{\sum_{n\in\Z}c_ne^{2\ui\pi (np(t)+|n|^st)}}^2\,\mathrm{d} t
\leq C(T) \sum_{n\in\Z}|c_n|^2.
\end{equation}
Further, there exist constants $T(p)>0$ and $\tilde C(p)>0$ such that
for every $T> T(p)$ and every sequence $(c_n)_{n\in\Z}\in\ell^2$, we
have
\begin{equation}\label{eq:obs-curve-schroedinger}
\tilde C(p) \sum_{n\in\Z}|c_n|^2\leq 
\frac{1}{T}\int_0^T\abs{\sum_{n\in\Z}c_ne^{2\ui\pi (np(t)+|n|^st)}}^2\,\mathrm{d} t.
\end{equation}
\end{theorem}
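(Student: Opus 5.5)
Expanding the square reduces both inequalities to estimates on a Gram matrix. With $\phi_n(t)=np(t)+|n|^st$ we have
\[
\int_0^T\Bigl|\sum_n c_n e^{2\ui\pi\phi_n(t)}\Bigr|^2\d t=\sum_{n,m}c_n\overline{c_m}\,G_{nm},\qquad G_{nm}=\int_0^T e^{2\ui\pi(\phi_n-\phi_m)(t)}\d t .
\]
The diagonal terms equal $T\sum|c_n|^2$. By Schur's test, both \eqref{eq:obs-curve-schroedinger-upper} and \eqref{eq:obs-curve-schroedinger} then follow from a uniform row bound $\sup_n\sum_{m\neq n}|G_{nm}|\le K$, where $K$ must not grow with $T$. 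The lower bound results once $T>T(p)$ is chosen with $T-K\ge \tilde C(p)\,T$. For the upper bound we may also take $T$ large, since the integral is monotone in $T$.

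To estimate the off-diagonal entries, fix $n\neq m$. The phase difference is $\psi(t)=(n-m)p(t)+(|n|^s-|m|^s)t$, so
\[
\psi'(t)=(n-m)p'(t)+|n|^s-|m|^s,\qquad |\psi''(t)|=|n-m|\,|p''(t)|\ge c_3|n-m|t^{\alpha-2}.
\]
By \eqref{ass:alpha-1}, $p'$ has constant sign and $|p'|$ is comparable to $t^{\alpha-1}$. Two cases arise.

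\emph{Non-stationary case.} Suppose $(|n|^s-|m|^s)/(n-m)$ has the same sign as $p'$. Take for instance $p'>0$, so that $n,m$ lie on the side where this quotient is positive. Then $|\psi'|\ge \bigl||n|^s-|m|^s\bigr|$, and since $\psi'$ is monotone (because $p''$ has a sign), one integration by parts gives $|G_{nm}|\lesssim 1/\bigl||n|^s-|m|^s\bigr|$.

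\emph{Stationary case.} Otherwise $\psi'$ may vanish, at $t_*$ with $p'(t_*)=-(|n|^s-|m|^s)/(n-m)=:\lambda_{nm}$, so $t_*\approx |\lambda_{nm}|^{1/(\alpha-1)}$. Van der Corput's second-derivative lemma on the piece near $t_*$ gives
\[
|G_{nm}|\lesssim \bigl(|n-m|\,t_*^{\alpha-2}\bigr)^{-1/2}.
\]
Away from $t_*$ we use first-derivative bounds; $\psi'$ is monotone, so van der Corput applies with constant $1$. If $t_*>T$ there is no stationary point in $[0,T]$, and $|\psi'|\ge(n-m)(|p'(T)|-|p'(t_*)|)$ instead.

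The main obstacle is summing these bounds uniformly in $n$ and independently of $T$. For $|\lambda_{nm}|\sim|n|^{s-1}$, with $m$ of opposite sign and comparable size, stationary points occur at times $\sim |n|^{(s-1)/(\alpha-1)}$. Only when this is $\le T$ does the stationary-phase contribution enter, and it is about $|n-m|^{-1/2}|n|^{-(s-1)(\alpha-2)/(2(\alpha-1))}$. I would split the sum over $m$ dyadically in $|n-m|$ and in the size of $\psi'(T)$. The case $\alpha<2$ (where $t^{\alpha-2}$ blows up at $0$) and the case $\alpha\ge 2$ need separate handling of the factor $t^{\alpha-2}$: for $\alpha\ge2$ the curvature is small near $0$, but there $\psi'$ is dominated by $|n|^s-|m|^s$.

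The hypothesis $s>3/2$ should enter precisely here. Near $m\approx -n$, the quotient $|n|^s-|m|^s\approx s|n|^{s-1}(|n|-|m|)$ makes $\lambda_{nm}$ small, and counting the pairs with $|\psi'|$ in a given range gives a series like $\sum_k k^{-1/2}\,(\cdots)^{-1}$. I expect convergence to require $2(s-1)>1$. Concretely, I would try to show
\[
\sum_{m\neq n}\min\Bigl(\bigl||n|^s-|m|^s\bigr|^{-1},\,|n-m|^{-1/2}\Bigr)\ \text{-type sums}\ \le K
\]
uniformly in $n$. If $K$ turns out to grow slowly in $T$, say like $T^{\theta}$ with $\theta<1$, the scheme still works: the diagonal term $T$ dominates for $T$ large. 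Finitely many small $|n|$ can be handled by crude bounds absorbed into $K$.
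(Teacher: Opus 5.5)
Your set-up is the paper's. You expand the square, read off $T\sum_n\abs{c_n}^2$ on the diagonal, bound the off-diagonal Gram entries by van der Corput, and close with an unweighted Schur test whose row sums are $o(T)$. Your stationary size $(\abs{n-m}\,t_*^{\alpha-2})^{-1/2}$ is exactly the case $\eta=1$ of Proposition~\ref{prop:vdc}(iii), since $t_*\approx A^{-1/(\alpha-1)}$ with $A=\abs{n-m}/\bigl||n|^s-|m|^s\bigr|$.

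The gap is that the substance of the theorem is the step you leave at ``I would try to show'': a bound on $\sup_n\sum_m\abs{G_{nm}}$ that is sublinear in $T$. Your first requirement, that $K$ not grow with $T$, cannot be met in general (for $p(t)=t^\alpha$ and $s<2$ it does grow, see below). The paper obtains this bound as follows:
\begin{itemize}
\item it sorts pairs by the threshold $\abs{\lambda_{nm}}<\tau=2c_2T^{\alpha-1}$ rather than by $t_*\le T$, which also covers $t_*$ slightly larger than $T$, where your lower bound for $\abs{\psi'}$ degenerates;
\item it trades the factor $t_*\lesssim T$ against powers of $A$ to reach $T^{\frac{1-\eta}{2}}\abs{n-m}^{-\gamma}\bigl||n|^s-|m|^s\bigr|^{-\delta}$;
\item it keeps the excision parameter $\le\frac12$ (Lemma~\ref{lem:bounded}) and sums with Lemma~\ref{lem:sum-unified}.
\end{itemize}
Your concrete target $\sum_m\min\bigl(\bigl||n|^s-|m|^s\bigr|^{-1},\abs{n-m}^{-1/2}\bigr)$ is not a majorant. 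When $\psi'$ vanishes inside $[0,T]$, $\abs{G_{nm}}$ has the size of the stationary term, i.e.\ the larger of the two. That sum of minima is trivially bounded for $s>\frac32$, so it hides the whole difficulty. The pairs $m=-n$ also fit neither of your cases: the quotient is $0$, and the critical point is the endpoint $t=0$, where $p''$ may vanish or blow up. The paper treats them separately, excising $[0,\delta]$ to get $\abs{n}^{-1/\alpha}$.

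Your guess that $s>\frac32$ enters through $2(s-1)>1$ is right for uniformity in $n$: for large $\abs{n}$, pairs with a critical point contribute about $T^{\alpha/2}\abs{n}^{3/2-s}$ to the row sum, whatever $\alpha$. But it ignores the growth in $T$. Carrying out the count, the row sums are of order $T^{\theta}$ with $\theta=1-\frac{\alpha}{2}+\frac{\alpha-1}{2(s-1)}$. Already in the row $n=0$, the pairs $m=-k$ with $k\lesssim T^{\frac{\alpha-1}{s-1}}$ have $t_*\approx k^{\frac{s-1}{\alpha-1}}$ and $\abs{G_{0,-k}}\approx k^{-\frac12-\frac{(s-1)(\alpha-2)}{2(\alpha-1)}}$. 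For $p(t)=t^\alpha$ these are true sizes by stationary phase. Testing the matrix $(\abs{G_{nm}})_{n\neq m}$ on $c_n=\un_{\abs{n}\le L}$ with $L\approx T^{\frac{\alpha-1}{s-1}}$ shows its norm is $\gtrsim T^{\theta}$, so even a weighted Schur test cannot do better.

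Since $\theta<1$ iff $s>2-\frac1\alpha$, your fallback ``$K\lesssim T^\theta$ with $\theta<1$'' is unavailable for $\alpha>2$ and $\frac32<s\le2-\frac1\alpha$. For $s<2-\frac1\alpha$, no argument that passes to $\abs{G_{nm}}$ gives the lower bound. The paper's bookkeeping hits the same wall. Its lower bound needs $\eta>-1$, while summability for $s\le2$ forces $\eta<-\frac{2-s}{s-1}(\alpha-1)$, and these are compatible only if $s>2-\frac1\alpha$. So, completed along these lines, your plan (like the paper's argument) gives the upper bound for all $s>\frac32$, but the lower bound only for $s>\max\bigl(\frac32,2-\frac1\alpha\bigr)$. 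The remaining range would require exploiting the oscillation of the phases of the $G_{nm}$.
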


\proofskip The first part of the theorem shows that a solution $u$ of
\eqref{eq:fraction-schroedinger} has a trace along a curve
$\bigl(t,p(t)\bigr)$ in the $L^2$-sense. Such a result was already
established in \cite[Theorem~11.1]{DL23}. However, our approach allows
the constant $C(T)$ to be explicitly computed.  The second part
\eqref{eq:obs-curve-schroedinger} then shows that this trace allows to
observe the solution, provided we follow the curve for some minimal
time $T(p)$. This is in strong contrast with the classical Ingham inequality
{\it i.e.} the case of $p$ being affine when no minimal time is required.
We will show that this condition can {\em not} be removed.
Whether the condition $s>\dfrac{3}2$ is necessary remains an open question.

When the series contains only high-frequency terms, then the
estimate \eqref{eq:obs-curve-schroedinger} is valid for any $T>0$.
This will follow from a more general result: consider a planar measure
$\mu$ that has polynomial Fourier decay.  We will show that under
suitable conditions on $s,N$ and the polynomial decay of
$\widehat{\mu}$, the $L^2(\mu)$-norm of a series of the form
$\dst\sum_{|n|\geq N}c_ne^{2\ui\pi(nx+|n|^st)}$ is equivalent to the
$\ell^2$-norm of the coefficients $(c_n)_{|n|\geq N}$ {\it i.e.}  that
the system of functions $(e^{2\ui\pi(nx+|n|^st)})_{|n|\geq N}$ is a
Riesz basis of its span in $L^2(\mu)$, when $N$ is large enough, see
Theorem~\ref{thm:highfreqmeas}.  A corollary of this result then
formulates as follows.

\begin{theorem}\label{thm:highfreq}
  Let $s>2$ and $T>0$. Let $\Gamma$ be a smooth curve in $[0,T]\times\T$ with
  non-zero curvature and $\sigma$ be the normalized arc-length measure
  on $\Gamma$.  Then there exists $N(s,\Gamma)$ such that, if
  $N\geq N(s,\Gamma)$, for every $(c_k)_{|k|\geq N}\in\ell^2$,
\begin{equation}
    \label{eq:thmhighfreqNbigcurve}
    \dfrac{1}{2}\sum_{|k|\ge N}|c_k|^2\leq  \int_{\Gamma} \abs{\sum_{|n|\geq N} e^{2\ui\pi(|k|^sx+ky)}}^2\,\mathrm{d}\sigma(x,y) 
    \leq \dfrac{3}{2}\sum_{|k|\ge N}|c_k|^2.
\end{equation}
\end{theorem}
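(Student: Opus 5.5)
The plan is to expand the square and control the off-diagonal terms through the Fourier decay of the arc-length measure. The statement has an evident typo: the integrand should carry $c_k$. Accordingly, with $\Phi(x,y)=\sum_{|k|\ge N}c_ke^{2\ui\pi(|k|^sx+ky)}$ and $\sigma$ a probability measure on $\Gamma$, we get
\[
\int_\Gamma|\Phi|^2\,\mathrm{d}\sigma=\sum_{|k|\ge N}|c_k|^2+\sum_{k\neq l}c_k\overline{c_l}\,\widehat{\sigma}\bigl(|l|^s-|k|^s,\,l-k\bigr),
\]
using the convention $\widehat\sigma(\xi)=\int e^{-2\ui\pi\langle\xi,z\rangle}\,\mathrm{d}\sigma(z)$. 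It is therefore enough to show that the off-diagonal matrix $M_{kl}=\widehat\sigma(|l|^s-|k|^s,l-k)$ for $k\ne l$, $|k|,|l|\ge N$, has operator norm at most $\tfrac12$ on $\ell^2$ once $N$ is large. We will bound this norm by Schur's test, that is, by $\sup_k\sum_{l\ne k}|M_{kl}|$. The matrix is Hermitian up to conjugation, so the row and column sums are the same.

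The first key input is stationary phase. Since $\Gamma$ is smooth, compact and has non-vanishing curvature, and its normalized arc-length density is smooth, we have $|\widehat\sigma(\xi)|\le C_\Gamma(1+|\xi|)^{-1/2}$. The endpoints of $\Gamma$ could spoil this decay. If so, I would either absorb them using a smooth cutoff argument (the endpoint contributions decay like $|\xi|^{-1}$ in non-stationary directions and are in any case no worse than $|\xi|^{-1/2}$), or use the paper's general Theorem~\ref{thm:highfreqmeas}, which presumably assumes exactly a bound $|\widehat\mu(\xi)|\le C|\xi|^{-\beta}$. I will take $\beta=\tfrac12$.

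The second step is the frequency-separation estimate. For $k\ne l$ we need a lower bound on $\bigl||l|^s-|k|^s\bigr|+|l-k|$. If $|l|\ne|k|$, the mean value theorem gives $\bigl||l|^s-|k|^s\bigr|\ge s\,\max(|k|,|l|)^{s-1}\bigl||l|-|k|\bigr|\ge s N^{s-1}$. If $l=-k$, the second coordinate is $2|k|\ge2N$. Hence, for fixed $k$,
\[
\sum_{l\ne k}|M_{kl}|\le C_\Gamma\Bigl(\sum_{\substack{|l|\ge N\\ |l|\ne|k|}}\bigl(s\max(|k|,|l|)^{s-1}\bigl||l|-|k|\bigr|\bigr)^{-1/2}+(2N)^{-1/2}\Bigr).
\]
We split the sum according to $m=\bigl||l|-|k|\bigr|\ge1$, each value of $m$ being attained by at most two $l$ of each sign.

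For $|l|\le 2|k|$ the terms are at most $|k|^{-(s-1)/2}m^{-1/2}$, and summing over $m\le 2|k|$ gives $O(|k|^{-(s-1)/2+1/2})=O(|k|^{1-s/2})$. For $|l|>2|k|$ we have $m\sim|l|$, so the terms are of order $|l|^{-s/2}$, and this sum converges with tail $O(|k|^{1-s/2})$ precisely when $s>2$. Since $|k|\ge N$, the whole Schur bound is $O(N^{1-s/2}+N^{-1/2})$, which tends to $0$. We then choose $N(s,\Gamma)$ so that it is at most $\tfrac12$, and this gives both inequalities in \eqref{eq:thmhighfreqNbigcurve}.

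The main obstacle is the range $|l|\le 2|k|$. There the Schur sum only just closes: the exponent $1-s/2$ must be negative, and this is exactly where $s>2$ enters. Some care is needed that the counting in $m$ does not lose more than constants. The uniform $|\xi|^{-1/2}$ decay of $\widehat\sigma$ for a curve with endpoints also needs justification. If the direct estimate is awkward, I would instead invoke Theorem~\ref{thm:highfreqmeas} with decay exponent $\tfrac12$ and only verify its hypotheses for $\sigma$.
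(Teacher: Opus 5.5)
Your proposal is correct and follows the paper's route. The paper derives this statement as Corollary~\ref{cor:highfreq} of Theorem~\ref{thm:highfreqmeas}: it expands the square, uses the stationary-phase bound $|\widehat\sigma(\xi)|=O(|\xi|^{-1/2})$, bounds the $l=-k$ terms by $N^{-1/2}$, and controls the remaining off-diagonal part by a Schur-type estimate via Lemma~\ref{lem:sum-unified} with $\gamma=0$, $\delta=\tfrac12$, which gives exactly your decay $N^{1-s/2}$.

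One small correction: the mean value theorem gives $\bigl||l|^s-|k|^s\bigr|\ge s\min(|k|,|l|)^{s-1}\bigl||l|-|k|\bigr|$, not the version with $\max$, which is false. The bound you actually need, $(a+m)^s-a^s\ge (a+m)^{s-1}m$, follows from $a^s\le a(a+m)^{s-1}$. Also, the endpoints of $\Gamma$ cause no trouble: Van der Corput's lemma gives the $|\xi|^{-1/2}$ decay for the sharply truncated arc-length measure as well.
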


\proofskip As a consequence, we obtain the following:
\begin{corollary}[High-Frequency Curved Ingham Inequality]
\label{cor:highfreqintro}
Let $s>2$, $\alpha\geq 2$ and
$p\in \cc^1([0,\infty[)\cap \cc^2(]0,\infty[)$ satisfying
Assumption~\ref{ass:H-alpha}.  Then, for every $T>0$ there exist constants
$N(p,T)\in\N$, $C(T,p)$ and $\tilde C(T,p)$ such that, for every
$N\geq N(T,p)$ and every sequence $(c_n)_{|n|\geq N}\in\ell^2$, we
have
\begin{equation}\label{eq:obs-curve-schroedinger-highfreq}
\tilde C(T,p)\sum_{|n|\geq N}|c_n|^2
\leq\int_0^T\abs{\sum_{|n|\geq N}c_ne^{2\ui\pi (np(t)+|n|^st)}}^2\,\mathrm{d} t.
\end{equation}
Moreover, there exists $s_p$ such that, if $s>s_p$, the previous
inequality is true for all $N\geq 0$, that is, for every sequence
$(c_n)_{n\in\Z}\in\ell^2$, we have
\begin{equation}\label{eq:obs-curve-schroedinger-highfreq-highdisp}
\tilde C(T,p)\sum_{n\in\Z}|c_n|^2
\leq\int_0^T\abs{\sum_{n\in\Z}c_ne^{2\ui\pi (np(t)+|n|^st)}}^2\d t.
\end{equation}
\end{corollary}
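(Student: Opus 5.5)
The plan is to deduce both parts of Corollary~\ref{cor:highfreqintro} from Theorem~\ref{thm:highfreq}, applied to the graph $\Gamma=\{(t,p(t)):\ t\in[\delta,T]\}$, together with an elementary change of measure from arc length to $\mathrm{d}t$. Note the coordinates: in Theorem~\ref{thm:highfreq} the phase is $|k|^sx+ky$, so $x$ plays the role of time and $y=p(t)$ the role of space.

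\textbf{Step 1 (the curve).} Assumption~\ref{ass:H-alpha} gives $|p''(t)|\ge c_3t^{\alpha-2}$. For $t\ge\delta>0$ this is at least $c_3\delta^{\alpha-2}>0$ when $\alpha\ge2$. Hence the graph of $p$ over $[\delta,T]$ is a $\cc^2$ curve with curvature $p''/(1+p'^2)^{3/2}$ bounded away from zero. We restrict to $[\delta,T]$ with, say, $\delta=T/2$, because the curvature may degenerate at $t=0$ when $\alpha>2$.

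There is a smoothness caveat. Theorem~\ref{thm:highfreq} asks for a smooth curve, whereas $p$ is only $\cc^2$. I would check that its proof (via Theorem~\ref{thm:highfreqmeas}) only uses the Fourier decay $|\widehat{\sigma}(\xi)|\lesssim|\xi|^{-1/2}$. This decay holds for $\cc^2$ curves with nonvanishing curvature by van der Corput's lemma. Verifying this is the only delicate point, and the main potential obstacle.

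\textbf{Step 2 (the first inequality).} Theorem~\ref{thm:highfreq} gives $N(s,\Gamma)=:N(p,T)$ such that
\[
\tfrac12\sum_{|n|\ge N}|c_n|^2\le\int_\Gamma\Big|\sum_{|n|\ge N}c_ne^{2\ui\pi(|n|^st+np(t))}\Big|^2\,\mathrm{d}\sigma .
\]
Here $\sigma$ is the normalized arc length, $\mathrm{d}\sigma=L^{-1}\sqrt{1+p'(t)^2}\,\mathrm{d}t$ with $L$ the length of $\Gamma$. By \eqref{ass:alpha-1}, $\sqrt{1+p'^2}\le\sqrt{1+c_2^2T^{2\alpha-2}}$. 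Bounding the integrand this way and then enlarging $[\delta,T]$ to $[0,T]$ yields \eqref{eq:obs-curve-schroedinger-highfreq} with
\[
\tilde C(T,p)=\frac{L}{2\sqrt{1+c_2^2T^{2\alpha-2}}}.
\]

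\textbf{Step 3 (the second inequality).} The point is to make $N(s,\Gamma)$ equal to $1$ for large $s$, and then add back the $n=0$ term. I would track in the proof of Theorem~\ref{thm:highfreq} how $N(s,\Gamma)$ arises. The off-diagonal Gram terms are $\widehat\sigma(|k|^s-|n|^s,\,k-n)$. For $k\ne n$ with $|k|,|n|\ge1$, the first coordinate satisfies $\big||k|^s-|n|^s\big|\ge 2^s-1$ unless $|k|=|n|$. By the decay bound, the Schur sum of these terms is controlled by a quantity of the form
\[
\sum_{m\ne n}\big(|m|^s-|n|^s\big)^{-1/2},
\]
which tends to $0$ as $s\to\infty$ uniformly once $|n|\ge1$.

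The pairs $k=-n$ need separate care. There the frequency is $(0,2n)$, and decay in the $y$-direction also holds since $p'\neq0$ on $[\delta,T]$. Consequently there is an $s_p$ with $N(s,\Gamma)\le1$ for $s>s_p$.

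Finally, the $n=0$ mode is the constant function. Its inclusion is handled by a compactness/finite-dimensional perturbation argument. Alternatively, the Gram matrix of the full system $\{\mathcal E_n\}_{n\in\Z}$ is a small perturbation of the identity, since $\widehat\sigma(|n|^s,n)\to0$ as well once $s$ is large. This gives \eqref{eq:obs-curve-schroedinger-highfreq-highdisp}.

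I expect the uniformity-in-$s$ bookkeeping, including the $k=-n$ and $n=0$ interactions, to be the main work.
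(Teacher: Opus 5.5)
Your Steps~1--2 are fine and match the paper, which obtains \eqref{eq:obs-curve-schroedinger-highfreq} directly from Theorem~\ref{thm:highfreq}. Your remarks on $\cc^2$ regularity are correct: only the $|\xi|^{-1/2}$ decay of $\widehat\sigma$ is used, and van der Corput provides it over $[T/2,T]$. Passing from arc length to $\mathrm{d}t$ is also correct.

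The gap is in Step~3, at the inclusion of $n=0$. Since $|n|^s=|n|$ for $|n|\le1$, the Gram entries among $\mathcal E_{-1},\mathcal E_0,\mathcal E_1$ are values of $\widehat\sigma$ at $(\pm1,\pm1)$ and $(0,\pm2)$. These are averages of $e^{2\ui\pi(t\pm p(t))}$ and $e^{\pm4\ui\pi p(t)}$ along the curve, and they do not depend on $s$. So your claim that $\widehat\sigma(|n|^s,n)\to0$ for large $s$ is false at $n=\pm1$. The Gram matrix of the full system therefore does not become a small perturbation of the identity as $s\to\infty$: it always contains this fixed $3\times3$ block.

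The ``compactness/finite-dimensional perturbation'' route does not escape the issue either. Adjoining $\mathcal E_0$ to the Riesz sequence $\{\mathcal E_n\}_{|n|\ge1}$ requires $\operatorname{dist}\bigl(\mathcal E_0,\overline{\operatorname{span}}\{\mathcal E_n\}_{|n|\ge1}\bigr)>0$. As $s\to\infty$, this Schur-complement condition becomes exactly the positive definiteness of that $3\times3$ block. That is the linear independence of $1$, $e^{2\ui\pi(t+p(t))}$, $e^{2\ui\pi(t-p(t))}$ on the interval. You never prove this, and it is not automatic: it fails when $p$ is constant or $p(t)=\pm t+\mathrm{const}$.

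This is precisely the ingredient the paper supplies. Lemma~\ref{lem:simple33} computes the Wronskian of $1$ and $e^{2\ui\pi(\gamma(x)\pm x)}$. It shows that a nontrivial combination of the three lowest modes can vanish on a $\cc^2$ curve only if the curve is a horizontal line or a line of slope $\pm1$. This gives the $s$-independent norm equivalence of Corollary~\ref{cor:simple33}.

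Theorem~\ref{th:merge} then splits $u=u_-+u_+$, with $u_-$ carried by $|n|\le1$ and $u_+$ by $|n|\ge2$, and applies \eqref{eq:corhighfreqsbigcurve} to $u_+$. For large $s$, every pair with $|m|\le1$, $|n|\ge2$ lies in $S_{\mathrm{good}}$, so Proposition~\ref{prop:vdc}(i) bounds $I^{(s)}_{m,n}\lesssim(|n|^s-1)^{-1}$. The cross terms thus have Schur sums tending to $0$ and are absorbed.

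You can repair your argument the same way. Keep $\{-1,0,1\}$ as a single block, prove its independence, and only require the inter-block terms to be small.

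A smaller imprecision concerns the pairs $k=-n$. Decay in $n$ does not make $\widehat\sigma(0,\pm2)$ small. What actually controls the $\{|n|\ge1\}$ block is $|\widehat\sigma(0,2n)|\le\eta<1$, as in Theorem~\ref{thm:highfreqmeas}.
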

\proofskip The second part of the corollary requires additional work
which is detailed in Section~\ref{Sec:highdisp}.

\bigskip Our last set of results concerns low-frequencies {\it i.e.}
functions of the form
\begin{equation}  \label{eq:4.1-copy}
  F(t,x)=\sum_{n=-N}^N  c_n \,  e^{\ui  \bigl(|n|^s t + \lambda_n x\bigr)}.
\end{equation}
When $N$ is fixed, these functions form a finite dimensional vector
space.  Now if a curve
$\Gamma=\bigl\{\bigl(t,\gamma(t)\bigr),\ t\in[0,T]\bigr\}$ is such
that the only function $F$ of the form \eqref{eq:4.1-copy} that
vanishes on $\Gamma$ is $F=0$ then
\[
(c_n)_{n=-N,\ldots,N}\to \int_0^T\abs{\sum_{n=-N}^N  c_ne^{2\ui\pi \bigl(|n|^s t + \lambda_n \gamma(t)\bigr)}}^2\,\mathrm{d}t
\]
is a norm on $\cc^{2N+1}$. Therefore, it is equivalent to the $\ell^2$ norm {\it i.e.} there exists
for every non-trivial continuous weight function some constant
$C=C(N,\gamma,w)\ge1$ such that, for all $(c_n)_{|n|\le N}$,
\begin{equation}  \label{eq:weighted-norm-equivalence}
\frac1C\sum_{n=-N}^N|c_n|^2
\le
\int_0^T |F(t,\gamma(t))|^2\,w(t)\,\mathrm dt
\le
C\sum_{n=-N}^N|c_n|^2.
\end{equation}

%

We will show the following rigidity result:

\begin{theorem}\label{thm:lowfreq-intro}
  Let $s,T>0$ and let $N\ge0$.  Let $(\lambda_n)_{n=-N}^N\subset\R$ be
  pairwise distinct frequencies and define $F$ as in
  \eqref{eq:4.1-copy}. Let $\gamma:[0,T]\to\R$ be continuous.
\begin{enumerate}
\item If
\[
  F(t,\gamma(t))\equiv0 \quad \text{for all } t\in(0,T),
\]
then either $F\equiv0$ or $\gamma$ extends holomorphically to a
complex neighborhood of $(0,T)$.

\item Assume that $\gamma$ extends holomorphically and satisfies at
  least one of the following conditions:
  \begin{itemize}
  \item $\gamma$ is entire and not affine;
  \item $\gamma$ extends meromorphically to a complex neighborhood of
    $[0,T]$ and has at least one pole.
\end{itemize}
Then
\[
  \{t\in(0,T):F(t,\gamma(t))=0\}\ \text{has an accumulation point}
  \quad\Longrightarrow\quad F\equiv0.
\]
\end{enumerate}
\end{theorem}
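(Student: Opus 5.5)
The plan is to complexify. For each $n$ put $a_n(z)=c_n e^{\ui |n|^s z}$, which is entire and never vanishes when $c_n\neq0$. Then $G(z,w)=\sum_{n} a_n(z)e^{\ui\lambda_n w}$ is an entire function on $\C^2$, and $F(t,\gamma(t))=G(t,\gamma(t))$. The basic fact used throughout is the following. Since the $\lambda_n$ are pairwise distinct, the exponentials $e^{\ui\lambda_n w}$ are linearly independent. Hence, if $F\not\equiv0$, then for every fixed $z_0$ the function $w\mapsto G(z_0,w)$ is a nonzero entire function of $w$.

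\emph{Part (1).} Fix $t_0\in(0,T)$ and set $x_0=\gamma(t_0)$. Since $G(t_0,\cdot)\not\equiv0$, the point $x_0$ is a zero of finite order $m$. The Weierstrass preparation theorem then gives, near $(t_0,x_0)$,
\[
G(z,w)=U(z,w)\,\bigl(w^m+a_1(z)w^{m-1}+\cdots+a_m(z)\bigr)=U(z,w)\,P(z,w),
\]
with $U$ nonvanishing and the $a_j$ holomorphic near $t_0$. By continuity, $\gamma(t)$ is a root of $P(t,\cdot)$ for $t$ close to $t_0$.

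Next factor $P$ into irreducible Weierstrass polynomials and let $D(z)$ be the discriminant of the reduced polynomial. $D\not\equiv0$, so its real zeros are isolated. Off these zeros the roots of $P$ are distinct and depend holomorphically on $z$ (implicit function theorem). A continuous selection $\gamma$ must therefore agree with a single holomorphic branch on each subinterval between consecutive zeros of $D$. At a zero of $D$ the branches are given by Puiseux series. A real-valued continuous $\gamma$ can only follow branches that are real on the real axis, and such branches are holomorphic.

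The main obstacle is excluding a \emph{switch} of branches at a crossing point, as in the model zero set $(w-z)(w+z)=0$ with the selection $|t|$. My first attempt is to show that crossings of real branches cannot occur for $G$ of the special form above. If that fails, the honest conclusion I can reach is that $\gamma$ is real-analytic off a discrete set, with holomorphic extension on each piece. I would then check whether the intended statement is this piecewise version.

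\emph{Part (2).} Here $\gamma$ is holomorphic on a connected neighbourhood $\Omega$ of $(0,T)$, with a pole removed in the meromorphic case. The function $h(z)=G(z,\gamma(z))$ is holomorphic on $\Omega$, and its zeros accumulate in $(0,T)$, so $h\equiv0$ on $\Omega$.

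\emph{Entire, non-affine case.} Now $h\equiv0$ on all of $\C$, and we have an identity $\sum c_n e^{g_n}\equiv0$ with entire exponents
\[
g_n(z)=\ui\bigl(|n|^s z+\lambda_n\gamma(z)\bigr).
\]
I would invoke Borel's theorem on exponential identities in its grouped form. Partition the indices with $c_n\neq0$ into classes on which the $g_n$ differ by constants; then the sum over each class vanishes. For $n\neq m$ with $g_n-g_m$ constant we would get
\[
(\lambda_n-\lambda_m)\gamma(z)=\text{const}-(|n|^s-|m|^s)z .
\]
Since $\lambda_n\neq\lambda_m$, this makes $\gamma$ affine, which is excluded. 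Hence every class is a singleton, so each $c_n$ vanishes and $F\equiv0$.

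\emph{Pole case.} Suppose $z_0$ is a pole of order $k$, so $\gamma(z)=b(z-z_0)^{-k}(1+O(z-z_0))$ with $b\neq0$. Approach $z_0$ along a ray on which $\ui b(z-z_0)^{-k}$ is real and tends to $+\infty$. Along this ray $\Re\bigl(\ui\gamma(z)\bigr)=R+o(R)$ with $R\to\infty$, while the factors $e^{\ui|n|^s z}$ stay bounded above and below. Let $\lambda_*$ be the largest $\lambda_n$ with $c_n\neq0$. The corresponding term has modulus of order $e^{\lambda_* R+o(R)}$ and dominates every other term by a factor $e^{(\lambda_*-\lambda_n)R+o(R)}\to\infty$. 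This contradicts $h\equiv0$, so $F\equiv0$.
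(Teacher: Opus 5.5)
\textbf{Part (1).} Your worry is justified, and the ``first attempt'' cannot succeed. Real branches do cross for sums of exactly this form, so Part (1) is false as stated and no proof of it exists.

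For any $s>0$ and $N=2$, take
\[
F(t,x)=(1-e^{\ui x})\bigl(e^{\ui t}-e^{\ui(2^st+3x)}\bigr)=e^{\ui t}-e^{\ui(t+x)}-e^{\ui(2^st+3x)}+e^{\ui(2^st+4x)}.
\]
This means $(c_1,c_{-1},c_2,c_{-2},c_0)=(1,-1,-1,1,0)$ and $(\lambda_1,\lambda_{-1},\lambda_2,\lambda_{-2},\lambda_0)=(0,1,3,4,2)$. The real zero set of $F$ contains the lines $x\in2\pi\Z$ and the lines $(2^s-1)t+3x\in2\pi\Z$. Set $t_0=2\pi/(2^s-1)$, $T=2t_0$ and $\gamma(t)=\frac13\max\bigl(0,2\pi-(2^s-1)t\bigr)$. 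Then $\gamma$ is continuous, $F(t,\gamma(t))\equiv0$ and $F\not\equiv0$, but $\gamma$ has a corner at $t_0$, so it has no holomorphic extension.

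The paper's proof (Lemma~\ref{lem:observation-curves-are-holomorphic}) breaks at exactly this point. The implicit function theorem applies to $F$ only where $F_1(t,\gamma(t))\neq0$. At points of $O_m\setminus O_1$ with $m\ge2$, the condition $F_m\neq0$ only describes the zero set of $F_{m-1}$, which $\gamma$ need not follow. In the example, $t_0\in O_2\setminus O_1$, since $x=0$ is a double zero of $F(t_0,\cdot)$.

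Your fallback is the correct repaired statement: $\gamma$ is real-analytic on $(0,T)$ outside a discrete set. Your argument for it, via Weierstrass preparation and the discriminant of the reduced polynomial, is sound. Drop the claim that real Puiseux branches are holomorphic, though: $w^3=z^2$ has the real continuous branch $|t|^{2/3}$. The piecewise conclusion does not need that claim.

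\textbf{Part (2).} This part is correct.

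Your pole case is the paper's Case~1: isolate the largest $\lambda_n$ with $c_n\neq0$ and approach the pole along a ray where $\ui\gamma\to+\infty$. Your version also covers a real pole outside $[0,T]$, which Theorem~\ref{thm:merhol} excludes by assuming $a\notin\R$. The paper additionally handles essential singularities via Picard, which the statement does not require.

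In the entire case you take a genuinely different route. Borel's theorem reduces everything to the observation that $g_n-g_m$ constant forces $\gamma$ to be affine. The paper instead substitutes $z\mapsto 1/z$ and writes $\gamma(1/z)=(a_1+h(z))/z$. It then picks $z_k\to0$ with $\ui h(z_k)/z_k$ real and tending to $+\infty$, and with $1/z_k=o\bigl(h(z_k)/z_k\bigr)$, and kills the dominant coefficient as in the pole case.
\begin{itemize}
\item The paper's route is elementary when $\gamma$ is a polynomial. For transcendental $\gamma$, however, Picard only gives the first property of $z_k$. The second property, $|h(z_k)|\to\infty$, is asserted without proof.
\item Your route treats polynomial and transcendental $\gamma$ uniformly and rigorously. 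The cost is quoting a Nevanlinna-type theorem.
\end{itemize}

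One minor point: take $\Omega$ to be a connected neighbourhood of $[0,T]$, not of $(0,T)$, because the accumulation point may be $0$ or $T$.
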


Finally, we will extend Theorem~\ref{thm:schroedinger} from Fourier
series to more general solutions of fractional Schrödinger equations
with bounded potentials $V\in L^\infty(\T)$
\begin{equation*}
\begin{cases}
    i\partial_tu=(-|\partial_x|^s+V)u, \quad x\in \T,t\in \R,\\
    u(0,x)=u_0   
\end{cases}
\end{equation*}
with initial data $u_0\in L^2(\T)$. We will show that solutions of
this equation admits $L^2$ traces along curves satisfying
Assumption~\ref{ass:H-alpha}.  Further, if the potential is small
enough and the time is large enough, we will show that this trace
controls the initial data $u_0$, see Theorem~\ref{th:schpot} for
details.  The question whether this result is true for arbitrary
potential $V\in L^\infty$ and arbitrary small time $T>0$ remains open.

\medskip The remaining of the paper is organized as follows: We prove
Theorem~\ref{thm:schroedinger} in Section~\ref{Sec:thm:schroedinger}.
This requires several intermediate steps: a Van der Corput type
estimate in Section~\ref{Sec:vdc}, and two simple lemmas about
sequences and series in Sections~\ref{Sec:seq} and \ref{Sec:ser}. The
proof itself is given in Section~\ref{Sec:pf:schroedinger}.  We then
pursue with the proof of Theorem~\ref{thm:highfreq} and
Corollary~\ref{cor:highfreqintro} in Section~\ref{Sec:Th2}. In
Section~\ref{Sec:pfTh2} we prove a more general result and show
how it implies Theorem~\ref{thm:highfreq}. We conclude the section with the proof of the
second part of Corollary~\ref{cor:highfreqintro} in
Section~\ref{Sec:highdisp}.  We then devote Section~\ref{Sec:lowfreq}
to the proof of Theorem~\ref{thm:lowfreq-intro} and conclude in
Section~\ref{Sec:schroedinger-potential} with the case of
Schrödinger equations with potentials.

\section{Curved Ingham Inequality}\label{Sec:thm:schroedinger}

\noindent Our strategy to prove the Curved Ingham Inequality is as
follows.  We write
\[
\int_0^T\abs{\sum_{n\in\Z}c_ne^{2\ui\pi (nt^\alpha+|n|^st)}}^2\d t=
\sum_{n,m\in\Z}c_n\overline{c_m}\, I_{n,m}
\]
with
\[
I_{n,m}=\int_0^T\abs{e^{2\ui\pi \bigl((n-m)p(t)+(|n|^s-|m|^s)t\bigr)}}^2\d t.
\]
Then, using stationary phase methods, we prove that, under good
conditions on $s,\alpha$ and if $T$ is large enough,  then $I_{n,m}$ has
sufficient off-diagonal decay to establish that
\[
\int_0^T\abs{\sum_{n\in\Z}c_ne^{2\ui\pi \bigl(np(t)+|n|^st\bigr)}}^2\d t\simeq\sum_{n\in\Z}|c_n|^2.
\]

\noindent The proof of Theorem~\ref{thm:schroedinger} is preceded by three
technical lemmata.

 \subsection{A first technical result}\label{Sec:seq}
Let us start with the following simple lemma that we will need later.

\begin{lemma}\label{lem:bounded}
  Let $\gamma>0$ and $s>1$. Then  
 the supremum
    \begin{equation*}
      M(\gamma,s)=\sup\left\{\frac{|n-m|}{\bigl||n|^s-|m|^s\bigr|^\gamma}\,:\ m,n\in\Z,\ |m|\neq|n|\right\}
    \end{equation*}
    is finite if and only if $(s-1)\gamma \ge 1$. In this case,  $M(\gamma,s) \le  4$.
\end{lemma}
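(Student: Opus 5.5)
The plan is to reduce to the moduli $a=|n|$ and $b=|m|$, give a lower bound for the denominator in terms of the larger modulus, and then pick an explicit family of pairs showing the condition is necessary. Since the expression is symmetric in $(n,m)$, I may assume $a>b\ge 0$, with $a,b$ integers. The numerator satisfies $|n-m|\le |n|+|m| = a+b\le 2a$.

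For the denominator I will use the elementary identity
\[
a^s-b^s = a^{s-1}(a-b) + b\,\bigl(a^{s-1}-b^{s-1}\bigr).
\]
Because $s>1$, the second term is nonnegative. Because $a-b\ge 1$, this gives $a^s-b^s\ge a^{s-1}\ge 1$. Hence
\[
\frac{|n-m|}{\bigl||n|^s-|m|^s\bigr|^\gamma}\le \frac{2a}{a^{(s-1)\gamma}} = 2\,a^{1-(s-1)\gamma}.
\]
If $(s-1)\gamma\ge 1$, the exponent is nonpositive and $a\ge 1$, so the quotient is at most $2\le 4$. This proves finiteness together with the claimed bound.

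For the converse, suppose $(s-1)\gamma<1$. I choose $n=k$ and $m=-(k-1)$ with $k\ge 1$, so that $|n|\ne|m|$ and $|n-m|=2k-1$ is as large as possible relative to the moduli. By the mean value theorem, $k^s-(k-1)^s\le s\,k^{s-1}$. The quotient is therefore at least
\[
\frac{2k-1}{s^\gamma k^{(s-1)\gamma}},
\]
which tends to $+\infty$ as $k\to\infty$ because $1-(s-1)\gamma>0$. Thus $M(\gamma,s)=\infty$.

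No step is a genuine obstacle. The only point needing care is the lower bound $a^s-b^s\ge a^{s-1}$ uniformly in $b<a$, including $b$ close to $a$ and $b=0$. The decomposition above handles all cases at once, without having to choose between the mean value theorem and the crude bound $a^s-b^s\ge a^s-(a-1)^s$.
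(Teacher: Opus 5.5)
Your proof is correct. The necessity half is the paper's argument: the paper takes $n=-m-1$ with $m\ge 1$, which is your family $|n|=|m|+1$ with opposite signs, and uses the same mean value estimate.

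The sufficiency half takes a different and shorter route.

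\textbf{The paper's route.} It sets $a=\min(|m|,|n|)$ and $k=\bigl||n|-|m|\bigr|$. From an integral representation it derives two lower bounds, $(a+k)^s-a^s\ge s\,a^{s-1}k$ and $(a+k)^s-a^s\ge k^s$. It bounds the numerator by $|n-m|\le 4\max(a,k)$, then splits into the cases $a\ge k$ and $k\ge a$, ending with the constant $4$.

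\textbf{Your route.} The identity $a^s-b^s=a^{s-1}(a-b)+b\bigl(a^{s-1}-b^{s-1}\bigr)$ replaces both lower bounds and the case split. It gives the single estimate $a^s-b^s\ge a^{s-1}(a-b)$ in terms of the larger modulus. Together with $|n-m|\le 2a$, this yields the sharper constant $M(\gamma,s)\le 2$.

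\textbf{What the paper's bookkeeping buys, and why you lose nothing.} Working with the gap $k$ mirrors the near/far splitting used later in Lemma~\ref{lem:sum-unified}, and it shows the quotient decays like $k^{-\gamma}$ in the gap. Your identity gives this too if you keep the factor $a-b$ rather than discarding it via $a-b\ge 1$. The quotient is then at most $2a^{1-(s-1)\gamma}(a-b)^{-\gamma}\le 2\,\bigl||n|-|m|\bigr|^{-\gamma}$ whenever $(s-1)\gamma\ge 1$.
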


\begin{proof} To show necessity, we simply
    choose $n=-m-1$ and $m\ge 1$. The Mean Value Theorem then implies
    \begin{equation*}
        \frac{|n-m|}{\bigl||n|^s-|m|^s\bigr|^\gamma}= \frac{2m+1}{\bigl|(m+1)^s-m^s\bigr|^\gamma} \sim\frac{2}{s^\gamma}|m|^{1-(s-1)\gamma}.
    \end{equation*}
    Letting $m\to+\infty$, this shows that the condition
    $(s-1)\gamma\geq 1$ is necessary to have a uniform upper bound.
    For the converse, we assume that $(s-1)\gamma\ge 1$. Let
  \[
      a=\min\{|m|,|n|\}\ge 0
    \quad\text{and}\quad 
    k=\bigl||n|-|m|\bigr|\geq 1
  \]
  since $|n|\not=|m|$. By construction $\max(|m|,|n|)=a+k$. This implies that
  $|n-m|\le 2\max(|m|,|n|)\leq 4\max\{a,k\}$. Write,
\[
  (a+k)^s-a^s = s\int_0^1 (a+tk)^{s-1}\,k\,\mbox{d}t.
\]
Using that $(a+tk)^{s-1}\geq a^{s-1}$ inside the integral we obtain 
\[
(a+k)^s-a^s\geq sa^{s-1}k
\]
while, using $(a+tk)^{s-1} \ge (tk)^{s-1}$ inside the integral gives
\[
(a+k)^s-a^s \;\ge\; k^s.
\]
Consequently, the quotient
\[
  f(n,m) := \frac{|n-m|}{\bigl||n|^s-|m|^s\bigr|^\gamma}
  \;\le\; \min\left(\frac{4\max\{a,k\}}{s^\gamma a^{(s-1)\gamma}k^\gamma},\frac{4\max\{a,k\}}{k^{s\gamma}}\right).
\]
We now split into two cases.

\proofskip
\emph{Case~1: $a\ge k\geq 1$.} We use the first bound 
\[
  f(n,m)\le \frac{4}{s^\gamma}\,\frac{1}{a^{(s-1)\gamma-1}k^\gamma}\le \frac{4}{s^\gamma k^\gamma}
\]
since $(s-1)\gamma\ge1$ and $a\geq 1$. Further, the right-hand side is uniformly bounded by $4/s^\gamma$.

\proofskip
\emph{Case~2: $k\ge a$.} Then the second bound gives
\[
  f(n,m)\le\frac{4}{k^{s\gamma-1}}\leq 4
\]
since $s\gamma\geq 1+\gamma>1$.

\proofskip
We have obtained a uniform bound $M(\gamma,s) \le  4\max(1,s^{-\gamma})=4$.
\end{proof}

\begin{remark}\label{rem:yunlei}
It turns out that
    \begin{equation*}
      \inf\left\{\frac{|n-m|}{\bigl||n|^s-|m|^s\bigr|^\gamma}\,:\ m,n\in\Z,\ |m|\neq |n|\right\}=0.
    \end{equation*}
    To see this, when $0<\gamma<1$, it is enough to consider $n=m+1$  and $m\to+\infty$
    (or $n=-m-1$  and $m \to {-}\infty$)  while for $\gamma\ge 1$, one may consider $n=2m$
    and $m\to+\infty$.  The result is unfortunate as a positive lower
    bound would have lead to a simpler proof of part of
    Theorem~\ref{thm:schroedinger}.
\end{remark}

\subsection{A Van der Corput type estimate}\label{Sec:vdc}

\begin{proposition}\label{prop:vdc}
  Let $s>1$, $\alpha>1$ and
  $p\in \cc^1([0,\infty[)\cap \cc^2(]0,\infty[)$ satisfying Assumption~\ref{ass:H-alpha} with constants $c_1,c_2,c_3$
  and such that $p'>0$. Let
  $T_0 := \left(\dfrac{3(\alpha-1)}{4\pi c_1}\right)^{\frac{1}{\alpha}}$.  For
  $T>0$ and $n \not= m$ define
\[
   I_{n,m}(T)=\abs{\int_0^Te^{2\ui\pi((n-m)p(t)+(|n|^s-|m|^s)t)}\,\mathrm{d}t}.
 \]
We define a threshold $\tau := 2c_2 T^{\alpha-1}$ and consider the
following partition of $\Z^2 \setminus \{ (n,\pm n): n \in \Z\}$:
\begin{align*}
    S^{+}_{\mathrm{good}} :=&\,\Bigl\{ (n,m)\in\Z^2:\ |n|\neq |m|,\ 
       \tfrac{|n|^s-|m|^s}{n-m} >0 \Bigr\},\\
    S^{-}_{\mathrm{good}}:=&\,\Bigl\{ (n,m)\in\Z^2:\ |n|\neq |m|,\ 
       \tfrac{|n|^s-|m|^s}{n-m}\leq -\tau \Bigr\},\\
    S_{\mathrm{bad}}:=&\,
    \Bigl\{ (n,m)\in\Z^2:\ |n|\neq |m|,\ 
        -\tau <\tfrac{|n|^s-|m|^s}{n-m}< 0\Bigr\}.
\end{align*}
\begin{figure}
    \centering
    \includegraphics[scale=0.5]{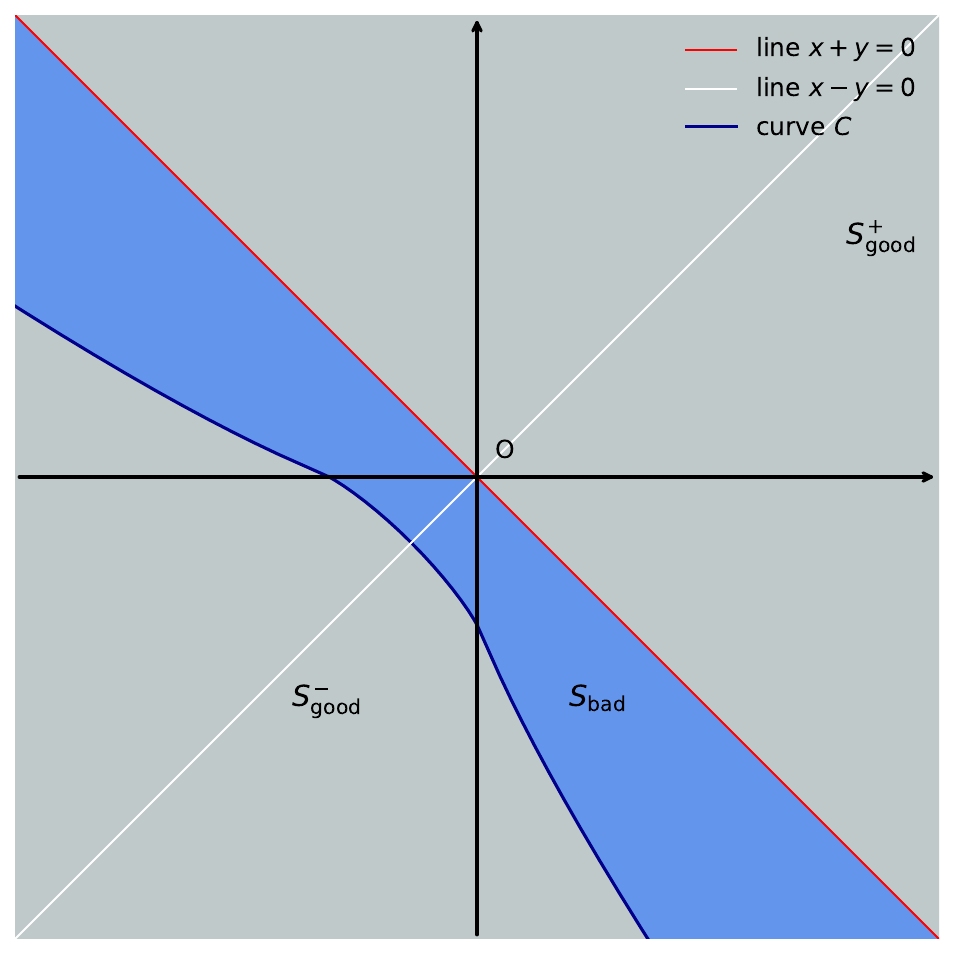}
    \caption{The sets $S^{+}_{\mathrm{good}}$ in red, $S^{-}_{\mathrm{good}}$
      in blue and $S_{\mathrm{bad}}$ in gray, when $s=1.5$
      and $\tau = 4$.}
    \label{fig:S-sets}
  \end{figure}
  We also write
  $S_{\mathrm{good}} = S^{+}_{\mathrm{good}}\cup
  S^{-}_{\mathrm{good}}$. Then the following estimates hold:
\begin{enumerate}
\renewcommand{\theenumi}{\roman{enumi}}
\item For $(m,n)\in S_{\mathrm{good}}$, we have  $I_{m,n}(T)\lesssim ||n|^s-|m|^s|^{-1}$.
\item For each  $T\geq T_0$ and $n\not=0$, we have  $I_{n,-n}(T)\lesssim n^{-\frac{1}{\alpha}}$.
\item There is a $T_1\geq 0$ depending on $p$ and $\alpha$ as well as
  a parameter $\eta \le 1$ according to Table~\ref{table:eta} such
  that, for each $T>T_1$ and $(n,m)\in S_{\mathrm{bad}}$ we have
  \[
    I_{n,m}(T)\lesssim
    T^{\frac{1-\eta}{2}}|n-m|^{-\frac{\eta}{2(\alpha-1)}}\abs{|n|^s-|m|^s}^{-\frac{\alpha-1-\eta}{2(\alpha-1)}}
  \]
  with implied constants being independent of $T$ and $m,n$, but not
  of the chosen $\eta$, nor on $T_1$.
\begin{table}[ht]
\centering
\renewcommand{\arraystretch}{1.3} 
\setlength{\tabcolsep}{18pt}      
\begin{tabular}{l c}  
\toprule
\rm{Condition on $s,\alpha$} & $\eta$ \\
\midrule
$1<s<1+\frac{1}{\alpha}$ & $  -(\alpha-1) <  \eta <  \dfrac{s-1}{2-s}(\alpha-1)$ \\ 
$1+\frac{1}{\alpha}\le s$ & $  -(\alpha-1) < \eta \le 1$ \\
\bottomrule
\end{tabular}
\vspace{1em}
\caption{Admissible $\eta$'s.}
\label{table:eta}
\end{table}
%
\end{enumerate}
\end{proposition}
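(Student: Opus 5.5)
We write $k=n-m\neq0$, $\Lambda=|n|^s-|m|^s$ and the phase $\phi(t)=kp(t)+\Lambda t$. Then
\[
\phi'(t)=kp'(t)+\Lambda=k\bigl(p'(t)+\Lambda/k\bigr)\qquad\text{and}\qquad \phi''(t)=kp''(t).
\]
Since $p$ is $\cc^2$ on $]0,\infty[$ and $|p''|\ge c_3t^{\alpha-2}>0$, $p''$ has constant sign. Together with $p'>0$, $p'(0)=0$ and $p'\sim t^{\alpha-1}$, this forces $p''>0$, so $p'$ is increasing. All three estimates will come from the classical Van der Corput lemmas. The first-derivative test says that if $|\phi'|\ge\lambda$ and $\phi'$ is monotone on an interval, then the integral over that interval is $\le C/\lambda$. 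The second-derivative test says that if $|\phi''|\ge\mu$ on an interval, then the integral is $\le C\mu^{-1/2}$. We will also use the trivial bound by the length of the interval.

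\textbf{Case (i).} On $S^+_{\mathrm{good}}$, $\Lambda/k>0$, so $|\phi'|=|k|(p'+\Lambda/k)\ge|\Lambda|$. On $S^-_{\mathrm{good}}$, $\Lambda/k\le-\tau=-2c_2T^{\alpha-1}$, while $p'(t)\le c_2T^{\alpha-1}$ on $[0,T]$; hence $|\phi'|\ge|k|\,|\Lambda/k|/2=|\Lambda|/2$. In both cases $\phi'$ is monotone because $\phi''=kp''$ has constant sign. The first-derivative test then gives $I_{n,m}\lesssim|\Lambda|^{-1}$ with an absolute constant.

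\textbf{Case (ii).} For $m=-n$ we have $\Lambda=0$ and $\phi=2np$, so $|\phi'|=2|n|p'(t)\ge 2c_1|n|t^{\alpha-1}$ and $|\phi''|\ge2c_3|n|t^{\alpha-2}$. We split $[0,T]$ at $\delta=|n|^{-1/\alpha}$, assuming $\delta\le T$ (otherwise the trivial bound already suffices).
\begin{itemize}
\item On $[0,\delta]$ the trivial bound gives $\delta$.
\item On $[\delta,T]$ the first-derivative test gives $\lesssim(|n|\delta^{\alpha-1})^{-1}=|n|^{-1/\alpha}$.
\end{itemize}
Summing, $I_{n,-n}\lesssim|n|^{-1/\alpha}$. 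The role of $T_0$ is only to ensure that the constant is uniform, for instance when $|n|^{-1/\alpha}$ is compared with $T$ for small $|n|$. We expect it to appear when one tracks the explicit constant $3(\alpha-1)/(4\pi c_1)$ coming from an integration by parts that includes the boundary term.

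\textbf{Case (iii)}, the main obstacle. Here $-\tau<\Lambda/k<0$, so $\phi'$ has a stationary point $t_*$ with $p'(t_*)=|\Lambda/k|$. Using \eqref{ass:alpha-1}, this gives $t_*\asymp|\Lambda/k|^{1/(\alpha-1)}\lesssim T$. Near $t_*$ we have
\[
|\phi''|\ge c_3|k|t^{\alpha-2}\asymp|k|\,t_*^{\alpha-2}
\]
on a comparable window. The second-derivative test there gives $\lesssim(|k|t_*^{\alpha-2})^{-1/2}$. Away from $t_*$ we use the first-derivative test together with $|\phi'(t)|\ge|k|\,|p'(t)-p'(t_*)|$, which is bounded below via the mean value theorem. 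When $\alpha<2$, $t^{\alpha-2}$ is decreasing, so the lower bound on $\phi''$ is best taken on $[0,T]$ as $|k|T^{\alpha-2}$. This yields a second family of bounds:
\begin{itemize}
\item the stationary bound $(|k|t_*^{\alpha-2})^{-1/2}\asymp|k|^{-1/2}(|\Lambda|/|k|)^{-(\alpha-2)/(2(\alpha-1))}$;
\item the crude global bound $T^{1/2}\cdot(\ldots)$ from the trivial estimate or from $\phi''\gtrsim|k|T^{\alpha-2}$.
\end{itemize}
These must then be interpolated geometrically with weight $\eta$ to produce the stated form
\[
T^{\frac{1-\eta}2}\,|k|^{-\frac{\eta}{2(\alpha-1)}}\,|\Lambda|^{-\frac{\alpha-1-\eta}{2(\alpha-1)}}.
\]
At $\eta=1$ the exponents are consistent with $(|k|t_*^{\alpha-2})^{-1/2}$ after substituting $t_*$, which is a sanity check. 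The delicate point is the admissible range of $\eta$. On $S_{\mathrm{bad}}$ the ratio $|\Lambda|/|k|$ is bounded, and Lemma~\ref{lem:bounded} (in the form $|k|\lesssim|\Lambda|^{\gamma}$ with $\gamma=1/(s-1)$) is what allows one to trade powers of $|k|$ against powers of $|\Lambda|$ when $\eta<0$. The restriction $\eta<\frac{s-1}{2-s}(\alpha-1)$ for $s<1+1/\alpha$ should come exactly from requiring this trade to be valid. I would first do the computation for $p(t)=t^\alpha$ to pin down exponents, then take $T_1$ large enough that $t_*$ and its window lie inside $[0,T]$ uniformly.
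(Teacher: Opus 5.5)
Parts (i) and (ii) are correct and follow the paper. In (ii), your remark that $\delta>T$ is covered by the trivial bound even makes $T_0$ unnecessary; in the paper $T_0$ only ensures that the optimising $\delta$ lies in $[0,T]$, not any boundary-term bookkeeping.

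The gap is in (iii). Your window argument, as set up, gives
\[
I_{n,m}\lesssim \bigl(|k|\,t_*^{\alpha-2}\bigr)^{-1/2}+\frac{1}{|\Lambda|}\asymp\Bigl(\frac{t_*}{|\Lambda|}\Bigr)^{1/2}+\frac{1}{|\Lambda|}.
\]
The second term is what the first-derivative test yields off the window. There $|\phi'|\gtrsim |k|\,t_*^{\alpha-1}\asymp|\Lambda|$, and no better lower bound is available because $\phi'(0)=\Lambda$. You never record this term, yet it alone decides the admissible $\eta$ and $T_1$.

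Geometric interpolation is not the mechanism. On $S_{\mathrm{bad}}$ you have $t_*\lesssim T$, hence $(t_*/|\Lambda|)^{1/2}\lesssim (t_*^{\eta}T^{1-\eta}/|\Lambda|)^{1/2}$ for every $\eta\le1$. The right-hand side, after substituting $t_*\asymp(|\Lambda|/|k|)^{1/(\alpha-1)}$, is exactly the bound claimed in (iii). This is the paper's trade $A^{-1/(\alpha-1)}\lesssim A^{-\eta/(\alpha-1)}T^{1-\eta}$. So the stationary term is harmless for all $\eta\le 1$. Interpolating between an $\eta=0$ and an $\eta=1$ bound is neither needed nor sufficient: it says nothing about $\eta<0$, and your decomposition only yields the $\eta=1$ bound once the remainder is absorbed.

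What is missing is absorbing $|\Lambda|^{-1}$ into $(t_*^{\eta}T^{1-\eta}/|\Lambda|)^{1/2}$. This amounts to
\[
T^{1-\eta}\gtrsim \frac{|k|^{\eta/(\alpha-1)}}{|\Lambda|^{(\alpha-1+\eta)/(\alpha-1)}}\qquad\text{uniformly on } S_{\mathrm{bad}}.
\]
\begin{itemize}
\item For $\eta=1$ it reads $\sup|k|/|\Lambda|^{\alpha}<\infty$, which Lemma~\ref{lem:bounded} gives when $s\ge 1+\frac{1}{\alpha}$.
\item For $0<\eta<1$ it reads $\sup|k|/|\Lambda|^{1+(\alpha-1)/\eta}<\infty$. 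By Lemma~\ref{lem:bounded} this holds iff $(2-s)\eta\le(s-1)(\alpha-1)$, which is the origin of the table, and the size of this supremum fixes $T_1$.
\item For $-(\alpha-1)\le\eta\le 0$ the right-hand side is $\le1$ because $|k|,|\Lambda|\ge1$, so $T_1=1$ suffices.
\end{itemize}

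So the form of Lemma~\ref{lem:bounded} you quote is the right tool. It is needed for $\eta>0$, to control the non-stationary remainder, not to trade powers when $\eta<0$.

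Likewise, $T_1$ has nothing to do with placing the window inside $[0,T]$. That is impossible: on $S_{\mathrm{bad}}$, $|\Lambda/k|$ can exceed $p'(T)\le c_2T^{\alpha-1}$, so $t_*>T$ occurs for every $T$. It is also unnecessary, since both Van der Corput tests apply on subintervals of $[0,T]$.

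Once this step is supplied, your dyadic window with the second-derivative test is a valid alternative to the paper's route. The paper instead bounds the sublevel set $J_\beta=\{|\phi'|<\beta\}$ trivially by $|J_\beta|\lesssim t_*\beta$, applies the first-derivative test on its complement, and optimises in $\beta$. There the constraint $\beta\le\frac{1}{2}$ is precisely where your remainder $|\Lambda|^{-1}$ reappears.
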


\begin{remark}
  The proof also shows that the estimate
  $I_{n,-n}(T)\lesssim n^{-\frac{1}{\alpha}}$ is also valid when $T>0$
  and $|n|\geq N_T:=\dfrac{3(\alpha-1)}{4\pi c_1 T^\alpha}$.
  It is also noteworthy that $T_1 \to 1^+$ as $\eta\to 0^+$.
\end{remark}

\begin{remark} 
If $p$ satisfies Assumption~\ref{ass:H-alpha}, $|p'(t)|>0$ for all $t>0$. In particular,
$p'$ has constant sign. So if $p'<0$, we can introduce $q=-p$ and notice
that
\begin{align*}
I_{m,n}^p(T):= & \; \abs{\int_0^Te^{2\ui\pi((n-m)p(t)+(|n|^s-|m|^s)t)}\,\mathrm{d}t}\\
= & \; \abs{\int_0^Te^{2\ui\pi(-(n-m)q(t)+(|n|^s-|m|^s)t)}\,\mathrm{d}t}\\
= & \; \abs{\int_0^Te^{2\ui\pi((-n-(-m))q(t)+(|-n|^s-|-m|^s)t)}\,\mathrm{d}t}
=I_{-m,-n}^q(T).
\end{align*}
We can thus obtain similar estimates in this case, after straight-forward
modification of the sets $S_{\mathrm{good}}^+, S_{\mathrm{good}}^-, S_{\mathrm{bad}}$.
\end{remark}

The proof of this proposition is based on the well-known Van der
Corput Lemma, that we recall for the convenience of the reader.  For
the proof, see for instance \cite[Corollary~2.9.3.(a)]{Grafakos}:

\begin{lemma}[Van der Corput]\label{lem:vdc}
  Let $-\infty<a<b<+\infty$, $\eta>0$,
  and $\phi$ a function of class $\cc^1$ on $[a,b]$.  Assume that
  $\phi'$ is monotonic and $|\phi'|\geq \xi>0$ on $[a,b]$ then, for
  every $\lambda\not=0$,
  \[
    \abs{\int_a^b e^{\ui \lambda
        \phi(t)}\,\mathrm{d}t}
    \leq \dfrac{3}{\xi}\frac{1}{|\lambda|}.
  \]
\end{lemma}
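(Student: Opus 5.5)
The plan is the classical integration-by-parts argument for the first-derivative test. Since $\phi'$ is continuous on $[a,b]$ and $|\phi'|\geq\xi>0$, $\phi'$ has constant sign there. Hence $\psi:=1/\phi'$ is well defined, continuous, bounded by $1/\xi$ in absolute value, and monotonic, because $\phi'$ is monotonic and of constant sign. On $[a,b]$ I rewrite
\[
  e^{\ui\lambda\phi(t)}=\frac{1}{\ui\lambda}\,\psi(t)\,\frac{\mathrm{d}}{\mathrm{d}t}\Bigl(e^{\ui\lambda\phi(t)}\Bigr),
\]
which is legitimate because $\phi\in\cc^1$.

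Next I integrate by parts. Since $\psi$ is only known to be continuous and monotonic, not $\cc^1$, I use the Riemann--Stieltjes form. Alternatively, I would first treat $\phi\in\cc^2$ and then approximate $\phi'$ uniformly by smooth monotone functions bounded away from $0$; the approximation is harmless because both sides depend continuously on $\phi'$ in sup norm. Integration by parts gives
\[
  \int_a^b e^{\ui\lambda\phi(t)}\,\mathrm{d}t
  =\frac{1}{\ui\lambda}\Bigl[\psi(t)e^{\ui\lambda\phi(t)}\Bigr]_a^b-\frac{1}{\ui\lambda}\int_a^b e^{\ui\lambda\phi(t)}\,\mathrm{d}\psi(t).
\]

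The boundary term is bounded by $\dfrac{|\psi(a)|+|\psi(b)|}{|\lambda|}\leq\dfrac{2}{\xi|\lambda|}$. For the second term, the monotonicity of $\psi$ is essential: the total variation of $\psi$ on $[a,b]$ equals $|\psi(b)-\psi(a)|$. Since $\psi(a)$ and $\psi(b)$ have the same sign and each has modulus at most $1/\xi$, this gives $|\psi(b)-\psi(a)|\leq 1/\xi$. So the Stieltjes integral contributes at most $\dfrac{1}{\xi|\lambda|}$. Summing the two contributions yields the bound $\dfrac{3}{\xi|\lambda|}$.

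The only delicate point is the lack of a second derivative, which is handled by the Stieltjes formulation or the approximation argument above. The key observation is that monotonicity makes the variation of $1/\phi'$ telescope instead of requiring any pointwise control of $\phi''$.
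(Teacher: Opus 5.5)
Your proof is correct and is the standard integration-by-parts argument the paper relies on (the paper gives no proof of its own, only a reference to Grafakos): write $e^{\ui\lambda\phi}=(\ui\lambda\phi')^{-1}\frac{\mathrm{d}}{\mathrm{d}t}e^{\ui\lambda\phi}$, bound the boundary terms by $2/(\xi|\lambda|)$, and bound the remaining term by the telescoping variation $|1/\phi'(b)-1/\phi'(a)|\le 1/\xi$. Your Riemann--Stieltjes formulation is a small but real gain in rigor, because the textbook version differentiates $1/\phi'$ and so tacitly assumes $\phi\in\cc^2$, whereas the lemma is stated for $\phi\in\cc^1$.
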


\begin{proof}[Proof of Proposition~\ref{prop:vdc}]
To simplify notation,
  we write $I_{m,n}=I_{m,n}(T)$ that we rewrite as
\[
   I_{m,n}=\abs{\int_0^T e^{\ui \lambda_{m,n}\,\phi_{m,n}(t)}\d t}
\]
where $\lambda_{m,n}=2 \pi(|n|^s-|m|^s)$ and 
\[
   \phi_{m,n}(t)=t+\frac{n-m}{|n|^s-|m|^s}\, p(t).
\]

\proofskip{\bf Step 1:} For
$(n,m)\in S_{\mathrm{good}}$,
set
\[
\phi_{m,n}'(t) = 1 + \frac{n-m}{|n|^s - |m|^s}\, p'(t).
\]
In this case, $\phi_{m,n}'$ has a positive lower bounded that depends on the sub-regions $S_{\mathrm{good}}^\pm$.
More precisely:
\begin{itemize}
\item[--] If $(n,m)\in S^{+}_{\mathrm{good}}$, then 
\(\dfrac{|n|^s - |m|^s}{n-m} > 0\). Since \(p'(t)\ge 0\), we infer that 
$\phi_{m,n}'(t) \ge 1$ for all  $t \in [0,T]$.
\item[--] If $(n,m)\in S^{-}_{\mathrm{good}}$, then 
\(\dfrac{|n|^s - |m|^s}{n-m} \le -\tau\). Let
\begin{equation}
    A := \Big|\frac{n-m}{|n|^s - |m|^s}\Big| \le \frac{1}{\tau}.\label{def-A}
\end{equation}
Since \(\sup_{t\in[0,T]} p'(t) \le c_2 T^{\alpha-1}\) and by
definition \(\tau = 2 c_2 T^{\alpha-1}\), we get
\[
\phi_{m,n}'(t) = 1 - A p'(t) \ge 1 - \frac{\sup p'(t)}{\tau} \ge \frac{1}{2}.
\]
\end{itemize}
In both cases, $\phi_{m,n}'(t)$ is monotone and bounded away from zero.  
Therefore, Van der Corput's Lemma~\ref{lem:vdc} implies
that $ I_{m,n}(T) \lesssim \bigl||n|^s - |m|^s\bigr|^{-1}$.

\proofskip{\bf Step 2:} Next, we discuss $I_{-n,n}$ for
$n\not=0$. Consider $\phi(t)=4\pi p(t)$ so that
\[
I_{-n,n}=\abs{\int_0^T e^{\ui  n\phi(t)}\d t}
   \leq \delta+\abs{\int_\delta^T e^{\ui  n\phi(t)}\d t}
\]
where $\delta<T$ is a parameter to be fixed later. As
$\phi'(t)\geq 4\pi c_1 t^{\alpha-1}\geq 4\pi c_1 \delta^{\alpha-1}$ on
$[\delta,T]$, Van der Corput's Lemma~\ref{lem:vdc} gives
\[
I_{-n,n}
   \leq \delta+\frac{3}{4\pi c_1 \delta^{\alpha-1}|n|}.
\]
Optimizing over $\delta>0$, i.e. choosing
$\delta=\left(\dfrac{3(\alpha-1)}{4\pi
    c_1|n|}\right)^{\frac{1}{\alpha}}$ we obtain
$I_{-n,n}\lesssim |n|^{-\frac{1}{\alpha}}$.
%
%
However, this choice requires either 
\[ 
T\geq \left(\dfrac{3(\alpha-1)}{4\pi c_1}\right)^{\frac{1}{\alpha}}
\] 
or \[|n|\geq N_T=\dfrac{3(\alpha-1)}{4\pi c_1 T^\alpha}.\]

\proofskip {\bf Step 3:} We conclude with the more delicate case
$(n,m)\in S_{\mathrm{bad}}$. As before we write
\[
\phi_{m,n}'(t) = 1 + \frac{n-m}{|n|^s - |m|^s}\, p'(t) := 1 - A\, p'(t), 
\]
Since $-\tau < \frac{|n|^s - |m|^s}{n-m} < 0$, $\phi'$ might vanish
for some $t\in[0,T]$. Observe also that Assumption~\ref{ass:H-alpha} implies
that $p'$ is monotonic so that $\phi'$ vanishes at most once.

To handle this case, we introduce a parameter
$0<\beta\leq\dfrac{1}{2}$ to be fixed later and define
\[
   J_\beta := \{ t \in [0,T] : |\phi_{m,n}'(t)| < \beta \}.
\]
By the monotonicity of $p'$ and the Mean Value Theorem, $J_\beta$ is
an interval $(t_{\min}, t_{\max})$ with $\phi_{m,n}'(t_{\min})=\beta$.
As a first consequence, we have
\[
  A^{-1} (1-\beta) =  p'(t_{\min}) \le   c_2 \,t_{\min}^{\alpha-1}
\]
from which we retain $t_{\min}>\left(\dfrac{1}{2 A c_2}\right)^{\frac{1}{\alpha-1}} >0$ on
  the way.  Next, we observe that $\phi_{m,n}'(t_{\max})=-\beta$, giving
\[
  A^{-1} (1+\beta) =  p'(t_{\max}) \ge  c_1 \,t_{\max}^{\alpha-1}.
\]
We infer $t_{\max} < \left(\dfrac{3}{2 A c_2}\right)^{\frac{1}{\alpha-1}}$.  The
  Mean Value Theorem gives us therefore a $\xi\in(t_{\min},t_{\max})$
  such that
\begin{align*}
  2\beta = \phi_{m,n}'(t_{\min}) - \phi_{m,n}'(t_{\max})
  = & \; A \bigl( p'(t_{\max})-p'(t_{\min}) \bigr)
  = A \,  p''(\xi)(t_{\max}-t_{\min})\\
  \geq& \; c_3 A \, \xi^{\alpha-2} \; |J_\beta|\\
  \ge & \; c_3 A \, \min( t_{\min}^{\alpha-2}, t_{\max}^{\alpha-2} ) \; |J_\beta|\\
  \gtrsim & \; A^{\frac{1}{\alpha-1}} \, |J_\beta|,
\end{align*}
where we used the previous estimates for $t_{\min}$ and $t_{\max}$.
From this, we conclude that
\begin{equation}
\abs{\int_{J_\beta}e^{\ui \lambda_{m,n}\,\phi_{m,n}(t)}\d t}
\leq|J_\beta|
\lesssim A^{-\frac{1}{\alpha-1}}\beta.
\label{eq:vdca2}
\end{equation}
Note that $J_\beta$ may not be included in
$[0,T]$ and that $[0,T]\setminus
J_\beta$ consists of one or two intervals on which
$\phi'$ is monotone. Therefore Van der Corput's Lemma~\ref{lem:vdc}
implies
\begin{equation}
\abs{\int_{[0,T]\setminus J_\beta}e^{\ui \lambda_{m,n}\,\phi_{m,n}(t)}\d t}
\leq \frac{6}{|\lambda_{m,n}|\beta}\lesssim \frac{1}{\abs{|n|^s-|m|^s}\beta}.
\label{eq:vdca1}
\end{equation}
Grouping \eqref{eq:vdca2} and \eqref{eq:vdca1}, we obtain
\begin{equation}
\label{eq:vdcatmp}
\abs{\int_{[0,T]}e^{\ui \lambda_{m,n}\,\phi_{m,n}(t)}\d t}
\lesssim A^{-\frac{1}{\alpha-1}}\beta+\frac{1}{\abs{|n|^s-|m|^s}\beta}.
\end{equation}
Next, we use that $A^{-\frac{1}{\alpha-1}} < \tau^{\frac{1}{\alpha-1}} \lesssim T$.
Thus, 
\[
A^{-\frac{1}{\alpha-1}}=\ent{A^{-\frac{1}{\alpha-1}}}^{\eta}\ent{A^{-\frac{1}{\alpha-1}}}^{1-\eta}
\lesssim A^{-\frac{\eta}{\alpha-1}}T^{1-\eta}
\]
provided $1-\eta\geq 0$. In other words, we can trade some powers of $A$ to some powers of $T$.
Replacing $A$ by its value, this reads
\begin{equation}
\label{eq:optimisation}
\abs{\int_{[0,T]}e^{\ui \lambda_{m,n}\,\phi_{m,n}(t)}\d t}
\lesssim\abs{\frac{|n|^s-|m|^s}{n-m}}^{\frac{\eta}{\alpha-1}}T^{1-\eta}\beta+\frac{1}{\abs{|n|^s-|m|^s}\beta}.
\end{equation}
The expression in \eqref{eq:optimisation} is of the form $h(\beta) = aT^{1-\eta}\beta + \frac{b}{\beta}$
which is optimal for $\beta^* = a^{-\frac{1}{2}}T^{-\frac{1-\eta}{2}}b^{\onehalf }$.
As we require $\beta\leq\dfrac{1}{2}$, the optimal choice may not be appropriate
when $1-\eta=0$ or require $T$ to be large when $1-\eta>0$.
We will therefore take a $\beta$ of the form $\beta_*=C\beta^*=Ca^{-\frac{1}{2}}T^{-\frac{1-\eta}{2}}b^{\onehalf }$ 
for which $h(\beta_*)=(C+C^{-1})\bigl(abT^{1-\eta})^{\onehalf }$.
We now distinguish several cases.

\proofskip{\bf Case 1.} $\eta=1$.\hfill\\
In this case, the right hand side of \eqref{eq:optimisation} does not
depend on $T$, so that we may optimize on $\beta$ alone.  We choose
\[
  \beta =\beta_{n,m} :=
   C\left(\frac{\abs{n-m}}{\abs{|n|^s-|m|^s}^{\alpha}}\right)^{\frac{1}{2(\alpha-1)}},
\]
where $C$ is a constant, independent of $m,n$, that is chosen to ensure
$\beta\leq\dfrac{1}{2}$. Such a constant exists if and only if
$\dfrac{\abs{n-m}}{\abs{|n|^s-|m|^s}^{\alpha}}$ is bounded
above. According to Lemma~\ref{lem:bounded}, this is the case exactly
when $(s-1)\alpha\geq 1$. With this choice of $\beta$, we obtain
\[
\abs{\int_{[0,T]}e^{\ui \lambda_{m,n}\,\phi_{m,n}(t)}\d t}
\lesssim  \;   \left( \frac{ \bigl|  |n|^s-|m|^s \bigr|^{2-\alpha} }{|n-m|}  \right)^{\frac{1}{2(\alpha-1)}},
\]
which is the claimed estimate.

\proofskip{\bf Case 2.} $\eta<1$.\hfill\\
In this case the choice of $T>0$ in \eqref{eq:optimisation} plays a role.
Define
\begin{align}
  \beta_{n,m}(T)
  := & \; \frac{1}{2} \abs{\frac{|n|^s-|m|^s}{n-m}}^{-\frac{\eta}{2(\alpha-1)}}\Bigl| |n|^s-|m|^s \Bigr|^{-\frac{1}{2}} T^{-\frac{1-\eta}{2}}\notag\\
  =  & \; \frac{1}{2}\left(\frac{\abs{n-m}^{\eta}}{\bigl| |n|^s-|m|^s \bigr|^{\alpha-1+\eta}}\right)^{\frac{1}{2(\alpha-1)}}T^{-\frac{1-\eta}{2}}.\label{eq:beta}
\end{align}
Assume now that
\begin{equation}
\label{eq:goodconditionbad1}
\mm(s,\alpha,\eta):=\sup_{(m,n)\in S_{\mathrm{bad}}}\left(\frac{\abs{n-m}^{\eta}}{\abs{|n|^s-|m|^s}^{\alpha-1+\eta}}\right)^{\frac{1}{2(\alpha-1)}}<+\infty
\end{equation}
then, for 
\begin{equation}
\label{eq:goodconditionbad2}
T\geq T_1\geq\mm(s,\alpha,\eta)^{\frac{2}{1-\eta}},
\end{equation}
we have $\beta_{n,m}(T)<\dfrac{1}{2}$. Then choosing $\beta=\beta_{n,m}(T)$, \eqref{eq:optimisation}
reads
\begin{align*}
\abs{\int_{[0,T]}e^{\ui \lambda_{m,n}\,\phi_{m,n}(t)}\d t}
\lesssim& \; \abs{\frac{|n|^s-|m|^s}{n-m}}^{\frac{\eta}{2(\alpha-1)}}\frac{1}{\abs{|n|^s-|m|^s}^{\frac{1}{2}}} T^{\frac{1-\eta}{2}}\\
\lesssim& \; \frac{T^{\frac{1-\eta}{2}}}{|n-m|^{\frac{\eta}{2(\alpha-1)}}\abs{|n|^s-|m|^s}^{\frac{\alpha-1-\eta}{2(\alpha-1)}}},
\end{align*}
which is the claimed inequality. 
It remains to find conditions on $s,\alpha,\eta$ that imply that $\mm(s,\alpha,\eta)<+\infty$.
We distinguish three sub-cases for $\eta$.

\proofskip{\bf Case 2a.} {$0<\eta<1$.}\hfill\\
Set $\gamma=1+\dfrac{\alpha-1}{\eta}$ so that
\[
\left(\frac{\abs{n-m}^{\eta}}{\abs{|n|^s-|m|^s}^{\alpha-1+\eta}}\right)^{\frac{1}{2(\alpha-1)}}
=\left(\frac{\abs{n-m}}{\abs{|n|^s-|m|^s}^{\gamma}}\right)^{\frac{\eta}{2(\alpha-1)}}.
\]
Thus $\mm(s,\alpha,\eta)\leq M(\gamma,s)^{\frac{\eta}{2(\alpha-1)}}$
with $M(\gamma,s)$ from Lemma~\ref{lem:bounded}. According to that lemma, $M(\gamma,s)<+\infty$
precisely when
\[
  (s-1)\gamma\geq 1\quad\Longleftrightarrow\quad(s-1)(\alpha-1)\ge
  (2-s)\eta.
\]
When $s\geq2$, this is satisfied for every $\eta > 0$. 

\noindent For values $1<s<2$, this leads to the restriction
$\eta\leq \dfrac{s-1}{2-s}(\alpha-1)$. But $\dfrac{s-1}{2-s}(\alpha-1)<1$
only for $1<s<1 + \frac{1}{\alpha}$ so that for $s\ge 1 + \frac{1}{\alpha}$ all $\eta\in(0,1)$ are allowed.

In summary, if one of the following conditions holds
\[\left\{
\begin{array}{lcl}
  s\geq 1 + \frac1{\alpha}  & \text{and} &\ 0<\eta<1\\
  s < 1 + \frac1{\alpha} & \text{and} &  0 <\eta \le  \dfrac{s-1}{2-s}(\alpha-1)
\end{array}\right.
\]
then \eqref{eq:goodconditionbad1} holds and we can chose $T_1= 4^{\frac{\eta}{(1-\eta)(\alpha-1)}}\geq
M(\gamma,s)^{\frac{\eta}{(1-\eta)(\alpha-1)}}\geq \mm(s,\alpha,\eta)^{\frac{2}{1-\eta}}$. Note that $T_1$ can be chosen arbitrarily near to $1$
by taking $\eta>0$ small enough.

\proofskip{\bf Case 2b.} {$\eta=0$.}\hfill\\
In this case, as $|n|\neq|m|$,
\[
  \beta_{m,n}(T)=\frac{1}{2}\frac{T^{-\frac{1}{2}}}{\abs{|n|^s-|m|^s}^{\frac{1}{2}}}
  \leq\dfrac{1}{2}
\]
when $T\geq T_1=1$.

\proofskip{\bf Case 2c.} {$\eta<0$.}\hfill\\
 In this case, it is better to introduce
 $\gamma=1-\dfrac{\alpha-1}{|\eta|}$ and rewrite \eqref{eq:goodconditionbad1}
 as
  \begin{equation}
     \label{eq:condT1b}
\mm(s,\alpha,\eta):=\sup_{(m,n)\in S_{\mathrm{bad}}}
\left(\frac{\abs{|n|^s-|m|^s}^\gamma}{\abs{n-m}}\right)^{\frac{|\eta|}{2(\alpha-1)}}
 \end{equation}
 By Remark \ref{rem:yunlei}, \eqref{eq:condT1b} is finite if and only
 if $\gamma\leq 0$, that is, if $1-\alpha\leq\eta<0$. In this case,
 $\mm(s,\alpha,\eta)\leq 1$ so that \eqref{eq:goodconditionbad2} holds
 as soon as $T\geq T_1=1$.
 Finally, Table~\ref{table:eta} is obtained by summarizing all cases.
\end{proof}

\subsection{A third technical lemma}\label{Sec:ser}

\begin{lemma}\label{lem:sum-unified}
  Let $s>1$, $\delta>0$ and $\gamma\in\mathbb R$. Assume the
  parameters satisfy the condition
\[
s\delta+\gamma \;>\;
\begin{cases}
1, &\text{if }\gamma\ge0,\\[2pt]
\max(1,\delta), &\text{if }\gamma<0.
\end{cases}
\]
Further define
\[
\sigma=\begin{cases}(s-1)\delta&\mbox{if }\gamma+\delta>1,\\
s\delta+\gamma-\max(1,\delta)&\mbox{if }\gamma+\delta<1,\\
(s-1)\delta-\eps&\mbox{if }\gamma+\delta=1\mbox{ with }0<\eps<(s-1)\delta.\end{cases}
\]
For integers \(m,n\in\Z\) with $|m|\not=|n|$, define
\begin{equation}  \label{eq:a-n-m}
a_{n,m}:=\frac{1}{|n-m|^\gamma\; \bigl||n|^s-|m|^s\bigr|^{\delta}},
\end{equation}
Then the following holds.

\begin{aufzi}
\item\label{item:tail-decay} (Uniform tail decay) For every integer \(N\ge1\) define the tail
  \[
      S_m(N):=\sum_{\substack{n\in\mathbb Z\\ |n|\ge N,\ |n|\ne|m|}} a_{n,m}.
  \]
  Then there exist constants \(C>0\) and $N_0$ (depending only on
  \(\gamma,\delta,s\)) such that for all \(N\geq N_0\) and all
  integers \(m\),
  \[
    S_m(N)\le C\,N^{-\sigma}.
  \]
In particular, \(S_m(N)\to0\) as
\(N\to\infty\) uniformly in \(m\). In particular, there is a constant
  \(C(\gamma,\delta,s)\) such that \(|S_m|\le C\) for all \(m\).
\item\label{item:global-bdd} (Decay of $S_m$) For every
  \(m\in\mathbb Z\) the series
  \(\dst S_m:=\sum_{\substack{n\in\mathbb Z\\|n|\ne|m|}} a_{n,m}\) converges and
  \(\lim_{|m|\to\infty} S_m =0\). 
\end{aufzi}
\end{lemma}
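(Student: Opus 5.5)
The plan is to bound $a_{n,m}$ pointwise, using the two elementary lower bounds from the proof of Lemma~\ref{lem:bounded}, and then sum over $n$ in dyadic-type regions. By the symmetry $a_{n,m}=a_{-n,-m}$ we may assume $m\ge 0$. Since $\bigl||n|^s-|m|^s\bigr|$ depends only on $|n|$, and $|n-m|\ge \bigl||n|-|m|\bigr|$ when $\gamma\ge0$, the case $\gamma\ge 0$ reduces to summing over $j=|n|\ge N$ with $j\neq m$:
\[
a_{n,m}\le \frac{1}{|j-m|^\gamma\,|j^s-m^s|^\delta}.
\]
Each $j$ accounts for at most two values of $n$. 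When $\gamma<0$ we instead use $|n-m|\le |n|+m$, and the estimate changes accordingly; this is where the threshold $\max(1,\delta)$ should enter.

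Write $j=m+k$ or $j=m-k$ with $k\ge1$, and use
\[
|j^s-m^s|\gtrsim \max(j,m)^{s-1}\,k
\qquad\text{and}\qquad
|j^s-m^s|\ge k^s .
\]
I would split the range of $j$ into three regions.
\begin{itemize}
\item \emph{Far region} $j\ge 2m$. Here $k\sim j$, so $a_{n,m}\lesssim j^{-(s\delta+\gamma)}$. Summing over $j\ge \max(N,2m)$ gives $\lesssim N^{1-s\delta-\gamma}$, which converges by the hypothesis $s\delta+\gamma>1$.
\item \emph{Near region} $m/2\le j\le 2m$ with $j\neq m$. Here $a_{n,m}\lesssim m^{-(s-1)\delta}k^{-(\gamma+\delta)}$ with $k\le m$. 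Summing over $k$ gives $m^{-(s-1)\delta}$ if $\gamma+\delta>1$, $m^{-(s-1)\delta}\log m$ if $\gamma+\delta=1$, and $m^{1-\gamma-s\delta}$ if $\gamma+\delta<1$. This region only contributes when $2m\ge N$, i.e. $m\gtrsim N$, so it is bounded by $N^{-\sigma}$ with exactly the three values of $\sigma$ in the statement. The $\eps$-loss in the case $\gamma+\delta=1$ absorbs the logarithm.
\item \emph{Inner region} $N\le j\le m/2$. Here $k\sim m$ and $|j^s-m^s|\sim m^s$, so the sum is at most $m\cdot m^{-\gamma-s\delta}$, with $m\ge 2N$, giving $N^{1-\gamma-s\delta}$.
\end{itemize}
I then compare exponents: $1-s\delta-\gamma\le -\sigma$ in every case. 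For instance, when $\gamma+\delta>1$ we have $s\delta+\gamma-1>(s-1)\delta$. This yields (a), and $N_0$ only matters for the logarithmic case. For $\gamma<0$ the factor $|n-m|^{-\gamma}$ grows like $\max(j,m)^{|\gamma|}$. Redoing the near region gives $m^{|\gamma|-(s-1)\delta}$ times $\sum_k k^{-\delta}$, and the sum over $k$ requires distinguishing $\delta>1$ from $\delta\le 1$; this produces $s\delta+\gamma-\max(1,\delta)$.

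For (b), convergence of $S_m$ for fixed $m$ is just the far-region bound. For the decay, split $S_m=\sum_{|n|<N}+S_m(N)$. The tail is at most $CN^{-\sigma}$ uniformly in $m$. For fixed $N$ and $|m|\ge 2N$, each term with $|n|<N$ is $\lesssim m^{-\gamma-s\delta}$ (or $m^{|\gamma|-s\delta}$ when $\gamma<0$), so this finite sum is $\lesssim N m^{1-\gamma-s\delta}\to0$. Letting $|m|\to\infty$ and then $N\to\infty$ gives $S_m\to0$. Uniform boundedness follows from (a) with $N=N_0$ plus the finitely many remaining $|n|<N_0$, handled the same way.

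The main obstacle will be the bookkeeping of the near region in the case $\gamma<0$, where it is delicate to match exactly the stated exponent $\sigma$ and the hypothesis involving $\max(1,\delta)$. There I would first try the cruder bound $|n-m|\le 3m$, and refine it only if the exponents fail to match.
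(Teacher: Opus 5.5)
Your argument for (a) is essentially the paper's. You reduce to $m\ge 0$ and fold $n\le -N$ onto $n\ge N$; the paper does this by bounding $S_m(N)$ by twice a sum over $n\ge N$ in which $|n-m|$ is kept if $\gamma\ge 0$ and replaced by $n+m$ if $\gamma<0$. You then treat $j>2m$ via $a_{n,m}\lesssim j^{-(s\delta+\gamma)}$, and below $2m$ you use $|j^s-m^s|\gtrsim m^{s-1}|j-m|$ and sum powers of $k=|j-m|$. The only difference is your separate inner region $N\le j\le m/2$, which the paper keeps inside its near part $N\le n\le 2m$.

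For (b) you take a genuinely different and shorter route. The paper re-estimates $S_m$ from scratch on the four ranges $[1,m/2]$, $(m/2,m)$, $(m,2m]$, $(2m,\infty)$. You instead split $S_m=\sum_{|n|<N}a_{n,m}+S_m(N)$, bound the head by $O\bigl(N|m|^{-(\gamma+s\delta)}\bigr)$ for $|m|\ge 2N$ and the tail by $CN^{-\sigma}$ uniformly, and let $|m|\to\infty$, then $N\to\infty$. Your version only needs some $\sigma>0$ and avoids redoing the case analysis. The paper's version exhibits explicit rates in $m$, which you can also recover by taking $N\approx |m|/2$.

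Concerning your worry about $\gamma<0$: do not try to refine, because the stated $\sigma$ is false there.
\begin{itemize}
\item If $\gamma<0$ and $\gamma+\delta>1$, take $m=N$ and the single term $n=-(N+1)$. Then $a_{n,m}=(2N+1)^{|\gamma|}\bigl((N+1)^s-N^s\bigr)^{-\delta}\gtrsim N^{|\gamma|-(s-1)\delta}$, which is not $O(N^{-(s-1)\delta})$.
\item The same example defeats the case $\gamma+\delta=1$ whenever $\eps<|\gamma|$.
\item If $\gamma<0$ and $\delta=1$, the terms with $n$ near $-m$ (e.g.\ $m=2N$, $-2N<n\le -N$) produce a genuine factor $\log N$. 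So an $\eps$-loss is unavoidable there, and your phrase ``distinguish $\delta>1$ from $\delta\le 1$'' hides this.
\end{itemize}
Your crude bound $|n-m|\le |n|+|m|$ gives $\sigma=s\delta+\gamma-\max(1,\delta)$ for $\gamma<0$ (minus $\eps$ when $\delta=1$). This exponent is sharp up to that logarithm, and it is exactly what the paper's own proof establishes in that case. It is also all that is used later, where either $\gamma=0$, or $\gamma<0$ with $\gamma+\delta=\tfrac12$ and $\delta<1$. So your argument proves the correct version of the lemma, and you should state that version.
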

\begin{proof}
\textbf{~\ref{item:tail-decay} Tail decay.}  First
observe that the condition $\gamma+s\delta>1$ implies that $S_m$
converges for each $m$. Now fix \(m\in\mathbb Z\). Observe first that  
$a_{-n,-m}=a_{n,m}$ so that $S_{-m}=S_m$. We can thus assume that $m\geq 0$.
Next, for $0\leq m\leq 10$, $N\geq 20$ and $|n|\geq N$,
\[
a_{n,m}=\frac{1}{|n-m|^\gamma\; \bigl||n|^s-|m|^s\bigr|^{\delta}}\lesssim
\frac{1}{|n|^{\gamma+\delta s}}
\]
so that
  \[
    S_m(N) = \sum_{|n| > N} \frac{1}{|n|^{\gamma+\delta s}}  \lesssim N^{1-(\gamma+s\delta)}\lesssim N^{-\sigma}
  \]
for any $\sigma\leq \gamma+s\delta-1$.  We are left with $m\geq 10$. 
Now, for $N\geq 20$, write 
\begin{align*}
S_m(N) =&
\; \sum_{\substack{n\ge N\\ n\ne m}} \frac{1}{|n-m|^\gamma\,|n^s-m^s|^\delta}
+ \sum_{\substack{n\le -N\\ n\ne -m}} \frac{1}{|n-m|^\gamma\,||n|^s-m^s|^\delta}\\
= &\; \sum_{\substack{n\ge N\\ n\ne m}} \frac{1}{|n-m|^\gamma\,|n^s-m^s|^\delta}  
+ \sum_{\substack{n\ge N\\ n\ne m}} \frac{1}{(n+m)^\gamma\,|n^s-m^s|^\delta}\\
\leq & \; 2\sum_{\substack{n\ge N\\ n\ne m}} \frac{1}{|n+\eta m|^\gamma\,|n^s-m^s|^\delta}
\end{align*}
where $\eta=+1$ if $\gamma< 0$ and $\eta=-1$ when $\gamma\geq 0$.
Call this last sum $T_m(N)$ and split it into the {\it ``near part''} \(n\in[N,2m]\) and the {\it ``far part''}
\(n\ge\max(N,2m+1)\), 
\(T_m(N)=T^{\mathrm{near}}(N)+T^{\mathrm{far}}(N)\).

\smallskip

\medskip\noindent\textbf{Far part.} If \(n\ge 2m+1\) then \(n/2\ge m\)
and therefore, with \(c_s:=1-2^{-s}>0\),
\[
n^s-m^s \;=\; n^s - m^s \;\ge\; n^s - (n/2)^s = c_s n^s,
\]
while
\[
\frac{n}{2}\leq  |n+\eta m|\le \frac{3n}{2}.
\]
Hence there exists an absolute constant \(C_1=C_1(s,\delta,\gamma)\)
such that
\[
\frac{1}{|n+\eta m|^\gamma\,|n^s-m^s|^\delta} \le C_1\, \frac{1}{n^{\gamma+s\delta}}.
\]
Therefore the far tail satisfies
\[
T^{\mathrm{far}}(N) \le C_1 \sum_{n\ge \max(N,2m+1)} \frac{1}{n^{\gamma+s\delta}}
 \le C_1 \sum_{n\ge N} \frac{1}{n^{\gamma+s\delta}} \lesssim N^{1-(\gamma+s\delta)}.
\]
We conclude that $T^{\mathrm{far}}(N)\lesssim N^{-\sigma}$ provided $\sigma\leq s\delta+\gamma-1$.

\smallskip

\medskip\noindent\textbf{Near part.} 
For the near indices \(N\le n\le 2m\), we
write \(j=|n-m|\) so that \(n=m+j\) or \(n=m-j\) with \(j\) ranging from \(1\) up to
\(\lesssim m\). By the Mean Value Theorem there exists \(\xi\) between
\(m\) and \(n\) such that
\[
|n^s-m^s| = s\,\xi^{\,s-1}\,j \quad\text{hence}\quad
|n^s-m^s|^\delta \asymp m^{(s-1)\delta}\,j^\delta
\]
with constants depending only
on \(s,\delta\).

\proofskip Let us first consider the case $\gamma< 0$. In this case, as $n\leq 2m$,
\[
\frac{1}{|n+m|^\gamma\,|n^s-m^s|^\delta} \lesssim \, m^{-(s-1)\delta-\gamma}\,
\frac{1}{j^{\delta}}.
\]
Summing over \(j\) from \(1\) up to \(Cm\) gives
\[
T^{\mathrm{near}} \lesssim\,m^{-(s-1)\delta-\gamma}\sum_{j=1}^{Cm}\frac{1}{j^{\delta}}
\lesssim \left\{\begin{array}{ll}m^{-(s\delta+\gamma-1)}&\mbox{if }\delta <1\\
m^{-(s\delta+\gamma-\delta)}&\mbox{if }\delta >1\\
m^{1-(s+\gamma)}\log m&\mbox{if }\delta =1
\end{array}\right..
\]
Now choose 
\[
\sigma=\begin{cases}s\delta+\gamma-1&\mbox{when }\delta<1\\
s\delta+\gamma-\delta&\mbox{when }\delta>1\\
s+\gamma-1-\eps&\mbox{when }\delta=1\mbox{ with }0<\eps<s+\gamma-1\end{cases}
\]
and notice that $0<\sigma\leq  s\delta+\gamma-1$.
Then, using $m\geq N/2$ we obtain $T^{\mathrm{near}} \lesssim N^{-\sigma}$
while $T^{\mathrm{far}} \lesssim N^{-\sigma}$
still holds.

\proofskip In the case that $\gamma\geq0$  there is a
constant \(C_2\) with
\[
\frac{1}{|n-m|^\gamma\,|n^s-m^s|^\delta} \le C_2\, m^{-(s-1)\delta}\,
\frac{1}{j^{\gamma+\delta}}.
\]
Summing over \(j\) from \(1\) up to \(Cm\) gives
\[
T^{\mathrm{near}} \lesssim
m^{-(s-1)\delta}\sum_{j=1}^{Cm}\frac{1}{j^{\gamma+\delta}}
\lesssim \left\{\begin{array}{ll}\dfrac{1}{m^{s\delta+\gamma-1}}&\mbox{if }\gamma+\delta<1\\
\dfrac{1}{m^{(s-1)\delta}}&\mbox{if }\gamma+\delta>1\\[6pt]
\dfrac{\log m}{m^{(s-1)\delta}}&\mbox{if }\gamma+\delta=1
\end{array}\right..
\]
Choosing the same $\sigma$ as in the case $\gamma<0$, we again obtain that $T^{\mathrm{near}} \lesssim N^{-\sigma}$.

This completes the proof of the first part of the theorem.

\smallskip

\medskip\proofskip \textbf{\ref{item:global-bdd} Decay of $S_m$.}  
Uniform boundedness of $S_m$ follows from the tail estimate. But 
we can actually prove a stronger decay estimate of $S_m$.  By symmetry
$S_{-m}=S_m$, so it suffices to consider $m\ge 42$ (say). Now we write
\[
S_m=\sum_{\substack{n\in\Z\\|n|\ne m}}
\frac{1}{|n-m|^\gamma\;|\,|n|^s-m^s|^\delta}
\leq \frac{1}{m^{\gamma+s\delta}} + 2\sum_{\substack{n\ge1\\ n\ne m}}
\frac{1}{|n+\eta m|^\gamma\;|n^s-m^s|^\delta}
\]
where again $\eta=+1$ is $\gamma<0$ and $\eta=-1$ if $\gamma\geq 0$.
It is enough to bound the last sum
\[
\Sigma_m := \sum_{\substack{n\ge1\\ n\ne m}} \frac{1}{|n-m|^\gamma\;|n^s-m^s|^\delta}.
\]
To do so, we cut it into four parts
\begin{equation}\label{eq:b-3}
  \Sigma_m  =  \; \sum_{n=1}^{\lfloor \frac{m}{2}\rfloor}+\sum_{n=\lfloor \frac{m}{2}\rfloor+1}^{m-1}+\sum_{n=m+1}^{2m}+\sum_{n=2m+1}^{+\infty}\\
            = \; \mathrm{I} + \mathrm{II} + \mathrm{III} + \mathrm{IV} 
\end{equation}
and estimate them separately.

\proofskip \textbf{Range I: \(1\le n\le \lfloor m/2\rfloor\).} Observe
that $|n\pm m|\asymp m$ and $m^s-n^s \ge m^s
(1-2^{-s})$. Consequently,
\[
  \mathrm{I}\leq \frac{1}{m^\gamma}      \sum_{n=1}^{\lfloor \frac{m}{2}\rfloor} \frac{1}{(m^s-n^s)^\delta}
           \asymp \frac{1}{m^{s\delta+\gamma-1}}.
\]
This term tends to zero, since $s\delta+\gamma-1>0$.

\proofskip  \textbf{Range II: \(\lfloor m/2\rfloor+1 \le n \le m-1\).} 
We write $j=m-n$ and observe  $1\leq j\leq\dfrac{m}{2}$.
The Mean Value Theorem then shows that, for each such $j$, there is a $\xi_j$ satisfying $j\leq m-j<\xi_j<m$ such that
\begin{equation*}
    (m^s-n^s)^\delta=\bigl(m^s-(m-j)^s\bigr)^\delta =s^\delta \xi_j^{(s-1)\delta}j^\delta
    \asymp m^{(s-1)\delta}j^\delta.
\end{equation*}

\proofskip{\bf Case 1.} $\gamma \ge 0$.\\
In this case
\[
      \mathrm{II}= \sum_{n=\lfloor \frac{m}{2}\rfloor+1}^{m-1}\frac{1}{(m-n)^\gamma(m^s-n^s)^\delta}
\lesssim \frac{1}{m^{(s-1)\delta}}\sum_{1\leq j`\leq\frac{m}{2}}\frac{1}{j^{\delta+\gamma}}.
\]
Estimating the sum gives
\[
\mathrm{II}\lesssim\left\{\begin{array}{ll}
          = m^{1-\gamma-\delta s}  & \text{if}\ \delta+\gamma<1\\
         \ln(m) m^{1-\gamma-\delta s}  & \text{if}\ \delta+\gamma=1\\
         m^{\delta(1-s)} & \text{if}\ \delta+\gamma>1
     \end{array}\right.
\]
which tends to zero with $m\to\infty$  in either case, using $s \delta + \gamma > 1$, $\delta> 0$  and $s> 1$.

\proofskip {\bf Case 2.} $\gamma < 0$. \hfill\\
 We now have to bound
 \[
\mathrm{II}=       \sum_{n=\lfloor \frac{m}{2}\rfloor+1}^{m-1}\frac{1}{(m+n)^\gamma(m^s-n^s)^\delta}
\lesssim  
       \frac{1}{m^{(s-1)\delta+\gamma}}  \sum_{1\leq j\leq \frac{m}{2}}\frac{1}{j^{\delta}}.
 \]
 Using the standard estimates of the last sum, we obtain
 \[
\mathrm{II} 
 \lesssim 
     \begin{cases}
         \frac{1}{m^{s\delta+\gamma+1-2\delta}}& \text{if}\; \delta<1,\\
         \frac{\ln(m)}{m^{s+\gamma-1}} & \text{if}\; \delta=1\\
         \frac{1}{m^{s\delta+\gamma-\delta}}  & \text{if}\; \delta>1
     \end{cases}.      
\]
As $\delta s + \gamma > \max(1,\delta)$, in all
cases $\mathrm{II} \to 0$ when $m\to\infty$.

 \proofskip \textbf{Range III: \(m+1\le n \le 2m\).} The case
 $m+1\leq n\leq 3m$ and $j=n-m$ is somehow symmetric to the second
 one. We use again the Mean Value Theorem to infer that for some
 $\xi_j$ with $j\leq m\leq \xi_j\leq m+j\leq 2m$,
\[
    (n^s-m^s)^\delta=\bigl((m+j)^s-m^s\bigr)^\delta
       =s^\delta j^\delta \xi_j^{(s-1)\delta}\asymp m^{(s-1)\delta}j^\delta.
\]
Similarly to the second case, we infer for non-negative $\gamma$
\begin{align*}
        \sum_{n=m+1}^{2m} \frac{1}{(n-m)^\gamma(n^s-m^s)^\delta}\asymp &\,\frac{1}{m^{(s-1)\delta}}\sum_{j=1}^{m} \frac{1}{j^{\gamma+\delta}}
\end{align*}
while for negative $\gamma$
\begin{align*}
        \sum_{n=m+1}^{2m} \frac{1}{(n+m)^\gamma(n^s-m^s)^\delta}\asymp &\,\frac{1}{m^{(s-1)\delta+\gamma }}\sum_{j=1}^{m} \frac{1}{j^{\delta}}.
\end{align*}
In both cases, the asymptotics is similar to the one for $\mathrm{II}$ and $\mathrm{III}$ goes to $0$
as $m\to+\infty$.

\proofskip \textbf{Range IV (tail): \(n\ge 2m+1\).} We write $j=n-m\geq m$. This yields
\[
(n^s- m^s)^\delta=
       \bigl((m+j)^s-m^s\bigr)^\delta
       = j^{s\delta}\left(\left(\frac{m}{j}+1\right)^s- \left(\frac{m}{j}\right)^s\right)^\delta
    \geq j^{s\delta}
\]
since $(1+a)^s-a^s\geq 1$ when $s\geq1$ and $a\geq0$.
As $|m+\eta n|\asymp j$, we obtain
\begin{equation*}
        \sum_{n=3m+1}^\infty \frac{1}{|m+\eta n|^\gamma (n^s-m^s)^\delta}
        \lesssim  \sum_{j=m+1}^\infty \frac{1}{j^{s\delta+\gamma}}
\end{equation*}
which vanishes at infinity, since we assumed $s\delta+\gamma>1$.

In conclusion, all four terms in \eqref{eq:b-3} go to $0$ when $|m|\to+\infty$
and the proof is done.
\end{proof}

\subsection{Proof of the curved Ingham inequality}
\label{Sec:pf:schroedinger}

We can now prove Theorem~\ref{thm:schroedinger}.

\begin{proof}
Consider the series
\begin{equation*}
        u(t,x)=\sum_{n\in\Z}c_n e^{2\ui\pi  (nx+|n|^st)}.
    \end{equation*}
It is enough to prove the theorem when $c_n=0$ when $|n|$ is (arbitrarily) large. A standard density argument will then
allow to conclude. We then write
\begin{equation}\label{eq:keythm-2}
\begin{aligned}
       \int_0^T \abs{u\bigl(t,p(t)\bigr)}^2\,\mathrm{d}t
= &\; \sum_{n.m\in\Z} c_n\overline{c_m} \int_0^T  
       e^{2\ui\pi\bigl( c(n-m)p(t) + (|n|^s-|m|^s) t \bigr)} \, \mathrm{d}t\\
 =&\; T\sum_{n\in\Z}  |c_n|^2  +  \sum_{\substack{n,m\in\Z\\ n\not=m}}  c_n\overline{c_m} \, I_{n,m}
\end{aligned}
\end{equation}    \ignorespacesafterend
where $(I_{n,m})$ are defined as in Proposition~\ref{prop:vdc}.
We further assume that $T\geq \max(T_0, T_1)$ so that we
may estimate $I_{n,m}$ with this proposition. We further cut
this sum into 3 pieces, corresponding to the different
estimates for the cases  $n=-m$, $(n,m)\in S_{\mathrm{good}}$,
and $(n,m)\in S_{\mathrm{bad}}$.

\proofskip First
\begin{equation}\label{eq:sum-1}
\biggl|\sum_{n\in\Z\setminus\{0\}}c_n\overline{c_{-n}}\,I_{n,-n}\biggr|
\lesssim\sum_{n\in\Z\setminus\{0\}}\frac{|c_n||c_{-n}|}{|n|^{1/\alpha}}
\lesssim\sum_{n\in\Z}|c_n|^2
\end{equation}
with Cauchy-Schwarz and $|n|^{1/\alpha}\geq 1$.

\proofskip Next we obtain
\begin{align}\label{eq:sum-2}
\biggl|\sum_{(n,m)\in S_{\mathrm{good}}}c_n\overline{c_{m}} \, I_{n,m}\biggr|
\lesssim&\,\sum_{(n,m)\in S_{\mathrm{good}}}\frac{|c_n||c_{m}|}{\bigl||n|^s-|m|^s\bigr|}
\leq\sum_{(n,m)\in S_{\mathrm{good}}}\frac{|c_n|^2+|c_{m}|^2}{2\bigl||n|^s-|m|^s\bigr|}\notag\\
\leq&\,\sum_{n\in\Z}|c_n|^2\sum_{m:m\neq \pm n}\frac{1}{\bigl||n|^s-|m|^s\bigr|}\notag\\
\leq&\,\left(\sup_{n\in\Z}\sum_{m:m\neq \pm n}\frac{1}{\bigl||n|^s-|m|^s\bigr|}\right)\sum_{n\in\Z}|c_n|^2\notag\\
\lesssim&\, \sum_{n\in\Z}|c_n|^2,
\end{align}
using Lemma~\ref{lem:sum-unified}~\ref{item:global-bdd} with $\gamma=0,\delta=1$.

\proofskip Now we consider the case of $(n,m)\in S_{\mathrm{bad}}$. By
Proposition~\ref{prop:vdc} (iii) we have
\begin{equation*}
    I_{n,m}\lesssim T^{\frac{1-\eta}{2}} |n-m|^{-\frac{\eta}{2(\alpha-1)}}\bigl| |n|^s-|m|^s  \bigr|^{-\frac{\alpha-1-\eta}{2(\alpha-1)}}
\end{equation*}
for any $\eta$ satisfying the conditions of Table~\ref{table:eta}. Define
\begin{equation}\label{eq:s-delta-gamma}
  \gamma=\frac{\eta}{2(\alpha-1)} \quad \text{and} \quad \delta= \frac{\alpha-1-\eta}{2(\alpha-1)},
  \quad\text{so that}\quad
  s\delta + \gamma = \dfrac{s(\alpha-1)-(s-1)\eta }{2(\alpha-1)}.
\end{equation}
For $\eta$ satisfying Table~\ref{table:eta}, we have the
estimate\footnote{We use an unweighted Schur estimate here. Better
  constants are available with a weighted version, i.e.
  using $w_n = (1+|n|)^t$ for
  $0<t\le \min(\, (s-1)\delta,\ \gamma+s\delta-1\,)$. }
\begin{align*}
    \biggl|\sum_{(n,m)\in S_{\mathrm{bad}}}c_n\overline{c_m} I_{n,m}\biggr|\lesssim &\, T^{\frac{1-\eta}{2}}\sum_{(n,m)\in S_{\mathrm{bad}}}\frac{|c_n||c_m|}{|n-m|^\gamma\bigl||n|^s-|m|^s \bigr|^\delta}\\
  \le&\, T^{\frac{1-\eta}{2}}\sum_{(n,m)\in S_{\mathrm{bad}}}\frac{|c_n|^2+|c_m|^2}{2|n-m|^\gamma\bigl||n|^s-|m|^s\bigr|^\delta}\\
   = &\, T^{\frac{1-\eta}{2}}\sum_{(n,m)\in S_{\mathrm{bad}}}\frac{|c_n|^2}{|n-m|^\gamma\bigl||n|^s-|m|^s\bigr|^\delta}
\end{align*}
where we exploited the symmetry in $n, m$, so that
\begin{equation}  \label{eq:sum-3}
  \biggl|\sum_{(n,m)\in  S_{\mathrm{bad}}}  c_n\overline{c_m} I_{n,m}\biggr|\lesssim
      K\, T^{\frac{1-\eta}{2}} \; \sum_{n\in\Z}|c_n|^2.
\end{equation}
where \begin{equation} \label{eq:constant}
    K := \sup_{n \in \Z} \left(   \sum_{(n,m) \in     \Z^2 }   \frac{1}{|n-m|^\gamma\bigl||n|^s-|m|^s
      \bigr|^\delta}  \right).
\end{equation}

Assume for a moment that we have already established $K < \infty$.
Then considering \eqref{eq:sum-1}, \eqref{eq:sum-2} and
\eqref{eq:sum-3} together into \eqref{eq:keythm-2}, we obtain
\begin{aufzi}
\item the upper bound
  \[
    \int_0^T|u(t,p(t))|^2\d t\lesssim \bigl(T+T^{\frac{1-\eta}2}+1\bigr)\sum_{n\in\Z}|c_n|^2
  \]
which establishes \eqref{eq:obs-curve-schroedinger-upper};
\item the lower bound
  \[
    \int_0^T|u(t,p(t))|^2\d t\gtrsim \bigl(T-C
    (T^{\frac{1-\eta}2}+1)\bigr)\sum_{n\in\Z}|c_n|^2
  \]
  for some positive constant $C$ depending only on $s,\gamma,
  \delta$. If we choose $T$ large enough so that
  \( T-C(T^{\frac{1-\eta}2}+1)>0 \), we obtain the desired inequality
\eqref{eq:obs-curve-schroedinger}, provided that $\eta>-1$.
\end{aufzi}

\proofskip It remains to establish the finiteness of
\eqref{eq:constant}.
 We distinguish two cases.
 
 \proofskip {\bf Case  $s>2$:}  The choice $\eta=0$ leads via \eqref{eq:s-delta-gamma}
 to $\gamma = 0$, $\delta = \dfrac12$ and $s\delta+\gamma = \dfrac{s}2 >1$.
 By  Lemma~\ref{lem:sum-unified}, $K$ is finite.
 
 \proofskip {\bf Case $\dfrac32 < s \le 2$:} In this case,
 \[
   {-}\dfrac{2-s}{s-1} (\alpha-1) > -(\alpha-1)
 \]
 so that we can choose some
 ${-}(\alpha-1) < \eta < {-}\dfrac{2-s}{s+1} (\alpha-1)$, which in turn implies
  via \eqref{eq:s-delta-gamma} that $\delta < 1$   and
 $s\delta+\gamma > \max(1, \delta)$.  Since $\gamma+\delta = \dfrac12$,
 Lemma~\ref{lem:sum-unified} shows that $K$ is finite, thereby completing the
 proof.
\end{proof}

\subsection{The minimal time condition in Theorem~\ref{thm:schroedinger}}

In this section, we prove that the condition $T\geq T(p)$ cannot be removed in Theorem~\ref{thm:schroedinger}:
\begin{proposition}
Let $\alpha>1$, $s\ge \dfrac32$. Let
$p\in \cc^1([0,\infty[)\cap \cc^2(]0,\infty[)$ satisfying
Assumption~\ref{ass:H-alpha}. For every $\tilde C>0$, there exists $T_{min}=T(p,\tilde C)$
such that, if the inequality
\begin{equation}
\label{eq:mintimeneeded}
\tilde C \sum_{n\in\Z}|c_n|^2\leq 
\frac{1}{T}\int_0^T\abs{\sum_{n\in\Z}c_ne^{2\ui\pi (np(t)+|n|^st)}}^2\,\mathrm{d} t
\end{equation}
holds for every $(c_n)_{n\in\Z}\in\ell^2(\Z)$ then $T\geq T_{min}$.
\end{proposition}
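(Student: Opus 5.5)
We argue by testing the inequality on a well-chosen pair of coefficients. The idea is that for small $T$ the curve $(t,p(t))$ stays very close to the point $(0,p(0))$, and $p(t)-p(0)=O(T^\alpha)$ is tiny. Hence the two exponentials $\mathcal E_n$ and $\mathcal E_{-n}$, which carry the same time frequency $|n|^s$, are almost proportional on $[0,T]$ as long as $|n|T^\alpha\ll1$. Their difference then has small $L^2$ norm compared to $\|c\|_{\ell^2}$.

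Concretely, fix $n=1$. Set $c_1=e^{-2\ui\pi p(0)}$, $c_{-1}=-e^{2\ui\pi p(0)}$, and $c_k=0$ otherwise, so that $\sum|c_k|^2=2$. The sum restricted to the curve is
\[
e^{2\ui\pi t}\bigl(e^{2\ui\pi (p(t)-p(0))}-e^{-2\ui\pi (p(t)-p(0))}\bigr)=2\ui\, e^{2\ui\pi t}\sin\bigl(2\pi(p(t)-p(0))\bigr).
\]
By \eqref{ass:alpha-1} we have $|p(t)-p(0)|\le \frac{c_2}{\alpha}t^\alpha$. Therefore
\[
\frac1T\int_0^T\abs{\sum_k c_ke^{2\ui\pi(kp(t)+|k|^st)}}^2\d t\le \frac{4}{T}\int_0^T\Bigl(\frac{2\pi c_2}{\alpha}t^\alpha\Bigr)^2\d t=\frac{16\pi^2c_2^2}{\alpha^2(2\alpha+1)}T^{2\alpha}.
\]
If \eqref{eq:mintimeneeded} holds, then $2\tilde C\le \frac{16\pi^2c_2^2}{\alpha^2(2\alpha+1)}T^{2\alpha}$. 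This forces
\[
T\ge T_{min}:=\Bigl(\frac{\alpha^2(2\alpha+1)\tilde C}{8\pi^2c_2^2}\Bigr)^{1/(2\alpha)}>0,
\]
which depends only on $p$ (through $c_2,\alpha$) and on $\tilde C$, as claimed.

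The main point to verify is simply that the chosen test sequence is legitimate and nontrivial. The ingredient that makes it work is the symmetry $|n|^s=|-n|^s$, i.e.\ the pair $(1,-1)$, which is exactly the diagonal excluded from the Van der Corput analysis in Proposition~\ref{prop:vdc}~(ii). Note that the argument does not use $s\ge 3/2$ or \eqref{ass:alpha-2}. One could strengthen it by taking $n$ with $|n|T^\alpha$ small, but $n=1$ already suffices. No step seems genuinely hard; the only care needed is the elementary bound $|\sin x|\le|x|$ and tracking the constant.
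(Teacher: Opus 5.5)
Your argument is correct. It follows a slightly different and more explicit route than the paper.

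The paper tests the inequality with the modes $0$ and $j$, taking $c_0=1$ and $c_j=-e^{-2\ui\pi jp(0)}$, on the shrinking intervals $T_j=j^{-(s+\varepsilon)}$. It then has to control two things at once: the time-frequency mismatch $j^sT_j=j^{-\varepsilon}$ and the spatial drift $j|p(T_j\tau)-p(0)|\lesssim j^{1-\alpha(s+\varepsilon)}$. This is where the choice of $\varepsilon$ and the condition $\alpha s>1$ enter. Letting $j\to\infty$, it shows the normalized integral tends to $0$ along $T_j$.

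You instead use the degeneracy $|1|^s=|-1|^s$. The time oscillation then factors out as $e^{2\ui\pi t}$, and only the drift $p(t)-p(0)=O(t^\alpha)$ remains. With $n=1$ fixed, you get a bound valid for every $T>0$. This yields the explicit threshold $T_{min}=\bigl(\alpha^2(2\alpha+1)\tilde C/(8\pi^2c_2^2)\bigr)^{1/(2\alpha)}$. It also gives the quantitative fact that any admissible constant satisfies $\tilde C\le \frac{8\pi^2c_2^2}{\alpha^2(2\alpha+1)}T^{2\alpha}$.

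Your version also shows directly that the inequality fails for every $T<T_{min}$. The paper's argument, as written, only exhibits failure along the sequence $T_j$. Reaching every small $T$ implicitly uses that its bound improves when $T$ decreases with $j$ fixed.

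As you note, neither $s\ge \frac32$ nor \eqref{ass:alpha-2} is needed. The paper's version likewise uses only $\alpha s>1$.
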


\begin{proof}
Let $j\geq 1$ and choose $(c_n)_{n\in\Z}=(c_n^{(j)})_{n\in\Z}$ to be the following sequence in \eqref{eq:mintimeneeded}:
\[
c^{(j)}_0 = 1, \qquad c^{(j)}_j = -e^{-2\ui\pi jp(0)},
\]
and
\[
c^{(j)}_n = 0 \quad \text{for all } n \neq 0, j.
\]
Moreover, we choose 
\[
T=T_j = j^{-(s+\varepsilon)},
\]
where $\varepsilon > 0$ will be fixed later.

Changing variables $\tau = T_j^{-1} t$, we obtain
\[
\frac{1}{T_j} \int_0^{T_j}
\left|
\sum_{k \in \mathbb{Z}} c^{(n)}_k e^{2\ui\pi (np(t)+|n|^st)}
\right|^2\,\mbox{d}t
= \int_0^1
\left|1-e^{2\ui\pi \bigl(j\bigl((p(T_j\tau)-p(0)\bigr)+j^sT_j\tau\bigr)}
\right|^2\,\mbox{d}\tau.
\]
Now taking $\varepsilon>0$, we obtain $j^sT_j=j^{-\eps}\to 0$.
Further
$$
\bigl|(p(T_j\tau)-p(0)\bigr|=\abs{\int_0^{T_j\tau} p'(t)\,\mbox{d}t}
\leq \frac{c_2}{\alpha} (T_j\tau)^\alpha
$$
since $p$ satisfies Assumption~\ref{ass:H-alpha}. In particular, for $0\leq\tau\leq 1$,
$$
j\bigl|p(T_j\tau)-p(0)\bigr|\lesssim j^{1-\alpha(s+\varepsilon)}\to 0
$$
provided we chose $\eps<\dfrac{\alpha s-1}{s}$. As $\alpha s>1$,
$\eps=\dfrac{\alpha s-1}{2s}$ satisfies both conditions and we obtain that
$$
\frac{1}{T_j} \int_0^{T_j}
\left|
\sum_{k \in \mathbb{Z}} c^{(n)}_k e^{2\ui\pi (np(t)+|n|^st)}
\right|^2\,\mbox{d}t\to 0.
$$
This shows that \eqref{eq:mintimeneeded} cannot hold for arbitrarily small $T$, completing the proof.
\end{proof}

\section{Fractional Schrödinger equality: high frequencies}\label{Sec:Th2}

\subsection{A result that implies Theorem~\ref{thm:highfreq}}
\label{Sec:pfTh2}

\begin{theorem}\label{thm:highfreqmeas}
Let \(\mu\) be a probability measure on \(\R^2\) with Fourier decay
\[
  | \widehat{\mu}(\xi)| \lesssim \dfrac{1}{(1+|\xi|)^{\delta}}
\]
for some \(\delta>0\).
Let $0<\eta<1$ be such that 
\[
  | \widehat\mu(\xi) |  \le \eta\quad\mbox{for } |\xi| \geq 1.
\]
Assume \(s\delta > 1 \). Then there is an $N(s, \mu)$ such that,
for every \(N \ge N(s, \mu)\) and every \((c_n)_{|n|\ge N}\in\ell^2\),
\begin{equation} \label{eq:thmhighfreqNbig}
    \dfrac{1}{2}\sum_{|n|\ge  N}|c_n|^2
    \leq    \int_{\R^2} \abs{\sum_{|n|\geq N}  e^{2\ui\pi(|n|^st+nx)}}^2\,\mathrm{d}\mu(t,x)
    \leq    \dfrac{3}{2}\sum_{|n|\ge N}|c_n|^2.
\end{equation}
For fixed $N\geq2$, there exists $\zeta(\mu,N)$ such that, if $s\geq\zeta(\mu,N)$,
\begin{equation}    \label{eq:thmhighfreqsbig}
  \dfrac{1-\eta}{2}\sum_{|n|\ge N}|c_n|^2
  \leq  \int_{\R^2} \abs{\sum_{|n|\geq N} e^{2\ui\pi(|n|^st+nx)}}^2\,\mathrm{d}\mu(t,x)
  \leq \dfrac{3+\eta}{2}\sum_{|n|\ge N}|c_n|^2.
\end{equation}
\end{theorem}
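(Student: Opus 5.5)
Expanding the square gives the Gram-matrix form
\[
\int_{\R^2}\Bigl|\sum_{|n|\ge N}c_ne^{2\ui\pi(|n|^st+nx)}\Bigr|^2\d\mu
=\sum_{|n|\ge N}|c_n|^2+\sum_{n\ne m}c_n\overline{c_m}\,\widehat\mu\bigl(-(|n|^s-|m|^s),-(n-m)\bigr),
\]
where we use that $\mu$ is a probability measure, so the diagonal terms equal $|c_n|^2$. (The statement omits the coefficients $c_n$ inside the sum; we read it with them.) The plan is to show that the off-diagonal operator has norm at most $\tfrac12$ for $N$ large, and at most $\tfrac{1+\eta}{2}$ for $s$ large. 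We use the Schur test exactly as in the proof of Theorem~\ref{thm:schroedinger}: $|c_n||c_m|\le\tfrac12(|c_n|^2+|c_m|^2)$ together with symmetry bounds the off-diagonal part by
\[
\Bigl(\sup_{|m|\ge N}\sum_{|n|\ge N,\,n\ne m}|\widehat\mu(\xi_{n,m})|\Bigr)\sum|c_n|^2,
\qquad \xi_{n,m}=(|n|^s-|m|^s,\,n-m).
\]

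\textbf{Bound on the kernel.} For $|n|\neq|m|$ we have $|\xi_{n,m}|\ge\bigl||n|^s-|m|^s\bigr|$, so the decay hypothesis gives $|\widehat\mu(\xi_{n,m})|\lesssim\bigl||n|^s-|m|^s\bigr|^{-\delta}$. This is $a_{n,m}$ of Lemma~\ref{lem:sum-unified} with $\gamma=0$. The lemma's hypothesis is $s\delta>1$, which is exactly our assumption. For $n=-m$ we have $|\xi_{n,m}|=2|n|$, so $|\widehat\mu|\lesssim|n|^{-\delta}$. This is a single term for each row, and it tends to $0$ as $|n|\ge N\to\infty$.

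\textbf{The case $N$ large.} By Lemma~\ref{lem:sum-unified}\ref{item:tail-decay}, $\sup_m S_m(N)\le CN^{-\sigma}$ with $\sigma>0$. Adding the antipodal term $\lesssim N^{-\delta}$, the Schur constant is $O(N^{-\min(\sigma,\delta)})$. Choosing $N(s,\mu)$ so that this is $\le\tfrac12$ gives \eqref{eq:thmhighfreqNbig}. We must keep in mind that $S_m(N)$ restricts only $n$, while $m$ is arbitrary; the lemma is uniform in $m$, so restricting $m$ to $|m|\ge N$ costs nothing.

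\textbf{The case $N$ fixed, $s\to\infty$.} This is the step requiring care, since the implied constants in Lemma~\ref{lem:sum-unified} depend on $s$. Here we bound the kernel directly.
\begin{itemize}
\item For the antipodal pair $n=-m$, use only $|\widehat\mu|\le\eta$. This is one term per row, contributing at most $\eta$ in the Schur sum, hence $\eta/2$ after the symmetrisation.
\item For $|n|\ne|m|$ with $|n|,|m|\ge N\ge2$, let $a=\min(|n|,|m|)\ge N$ and $k=||n|-|m||\ge1$. Then $(a+k)^s-a^s\ge a^{s-1}s k$, and since $2^s-1\ge 1$, this is $\ge a^{s-1}$.
\item The number of $n$ with a given $k$ is at most $2$. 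Hence
\[
\sum_n|\widehat\mu|\lesssim\sum_{k\ge1}\bigl(s\,a^{s-1}k\bigr)^{-\delta}+\sum_{b>a}\bigl(b^s-a^s\bigr)^{-\delta},
\]
where the second sum covers the case in which $m$ is the smaller one. Both sums go to $0$ as $s\to\infty$ uniformly in $a\ge N\ge2$. Indeed, $b^s-a^s\ge b^s(1-(a/b)^s)$, and one splits off $b\le 2a$ (using $(a+1)^s-a^s\ge a^{s-1}s$) from $b>2a$ (using the bound $\gtrsim b^{s}$). Summability in $k$ needs $\delta>1$, or else splitting $k$ against $a$ as in the proof of Lemma~\ref{lem:sum-unified} and using $a^{-(s-1)\delta}$ with $s$ large.
\end{itemize}
So for $s\ge\zeta(\mu,N)$ the non-antipodal Schur constant is $\le\tfrac12$. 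Combined with the antipodal part, the off-diagonal form is bounded by $\tfrac{1+\eta}{2}\sum|c_n|^2$, which yields \eqref{eq:thmhighfreqsbig}.

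The main obstacle is this uniform-in-$s$ control for fixed $N$: we must check that the constants hidden in $\lesssim$ (from the decay of $\widehat\mu$) are fixed, and that every geometric series in $a^{-(s-1)\delta}$ genuinely vanishes as $s\to\infty$ uniformly over rows.
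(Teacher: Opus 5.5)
Your route is the paper's: expand the square, treat the antipodal pairs $n=-m$ separately from the pairs with $|n|\neq|m|$, and run a Schur bound fed by Lemma~\ref{lem:sum-unified}\ref{item:tail-decay}. The first part, \eqref{eq:thmhighfreqNbig}, is fine.

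The second part has a bookkeeping error that breaks the stated constants. The antipodal terms contribute up to $\eta\sum|c_n|^2$, not $\tfrac{\eta}{2}\sum|c_n|^2$. The inequality $|c_n||c_{-n}|\le\tfrac12(|c_n|^2+|c_{-n}|^2)$ is exactly what turns the row sum into the Schur constant, so a row sum of $\eta$ gives $\eta\sum|c_n|^2$; there is no second factor $\tfrac12$ to gain. This cannot be improved. Take $c_{n_0}=c_{-n_0}=1$ and all other coefficients zero. Then the integral equals $2+2\Re\widehat\mu(\xi_{n_0,-n_0})$ while $\sum|c_n|^2=2$, so the antipodal part alone is $\Re\widehat\mu(\xi_{n_0,-n_0})\sum|c_n|^2$, and nothing in the hypotheses keeps this below $\eta$ in modulus. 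Consequently, making the non-antipodal Schur constant $\le\tfrac12$ only gives the off-diagonal bound $(\eta+\tfrac12)\sum|c_n|^2$. That yields the lower constant $\tfrac12-\eta$, which is not $\tfrac{1-\eta}{2}$ and is not even positive when $\eta\ge\tfrac12$.

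The repair is the one the paper uses. Since the non-antipodal row sums tend to $0$ as $s\to\infty$, choose $\zeta(\mu,N)$ so that they are at most $\tfrac{1-\eta}{2}$. Then the off-diagonal form is bounded by $\bigl(\eta+\tfrac{1-\eta}{2}\bigr)\sum|c_n|^2=\tfrac{1+\eta}{2}\sum|c_n|^2$, which gives exactly \eqref{eq:thmhighfreqsbig}.

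Your concern for uniformity in $s$ is well placed; the paper simply asserts it. But the sketch needs tightening when $\delta\le1$ (for curves $\delta=\tfrac12$), because your displayed sum $\sum_{k\ge1}(s\,a^{s-1}k)^{-\delta}$ with $a$ fixed diverges there. Fix a row $|m|=M$.
\begin{itemize}
\item For indices $|n|=j<M$, use the larger endpoint: $M^s-j^s\ge M^{s-1}(M-j)$. This gives $\sum_{N\le j<M}(M^s-j^s)^{-\delta}\lesssim_\delta M^{1-s\delta}$, with an extra $\log M$ if $\delta=1$ and the bound $M^{-(s-1)\delta}$ if $\delta>1$. This is uniformly small for $M\ge3$ as $s\to\infty$.
\item For indices $|n|=b>M$, use $b^s-M^s\ge b^s-(b-1)^s\ge s(b-1)^{s-1}$. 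This gives at most $s^{-\delta}\sum_{i\ge M}i^{-(s-1)\delta}$, uniformly small for $M\ge2$.
\end{itemize}
Both estimates involve only the fixed constant in the decay of $\widehat\mu$ and constants depending on $\delta$, so they are genuinely uniform in $s$.
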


\begin{remark}
  The proof provides an estimate
  $\zeta(\mu,s)\simeq\dfrac{1}{\log N}$ with constants depending on
  $\mu$.
\end{remark}

\begin{remark}
For a probability measure $\mu$
on $\R^d$, the quantity
\[
\dim_\ff(\mu):=\sup\{t\in[0,d]\,:\ |\widehat{\mu}(\xi)|\lesssim(1+|\xi|)^{-t/2}\}
\]
is called its Fourier dimension.
The Fourier dimension of a subset
$E\subset\R^d$ is then
\[
\dim_\ff(E):=\sup\{t\in[0,d]\,:
\exists\mu\in\pp(E)\mbox{ s.t. }\dim_\ff(E)\leq t\}.
\]
where $\pp(E)$ is the set of probability measures supported on $E$.
The determination of the Fourier dimension of sets is a vast subject
and, we refer to P. Mattila's book \cite{Ma15} and the surveys by
R. Lyons \cite{Ly} and F. Ekström \& J. Schmeling \cite{ES} as
starting points.

Let us here mention that the Fourier dimension is known to be smaller
than the Hausdorff dimension and sets for which they coincide are
called Salem sets.

The middle-third Cantor set is known not to have positive Fourier
dimension but Cantor sets obtained by randomly selecting basic
intervals with spatial independence are in general Salem sets 
(see e.g. \cite{Shm17,SS18}).  Salem sets or sets with positive
Fourier dimensions are ubiquitous in random settings. Kahane
\cite{Kah85} showed that the image of a set under the fractional
Brownian motion is almost surely Salem. Kahane’s result was
generalized to Gaussian random fields with stationary increments
\cite{SX06}, Brownian sheet \cite{KWX06}, and fractional Brownian
sheets \cite{WX07}.  There are also some non-random constructions of
sets of positive Fourier dimension based on self-affine measures
\cite{LiSahlsten}, iterated function systems \cite{BakerBanaji} and
number theory \cite{FH}.
\end{remark}

\begin{remark}
It is well known that $\widehat{\mu}$ is continuous, positive definite and that, if there is a $\xi_0$ such that $|\widehat{\mu}(\xi_0)|=1$, then $\widehat{\mu}$ is periodic. As we assumed that
  $\widehat{\mu}(\xi)\to 0$ when $\xi\to\infty$, this can not happen. As a consequence,
  there is indeed some $0<\eta<1$ such that $|\widehat{\mu}(\xi)|\leq\eta$ for $|\xi|\geq 1$.
\end{remark}

\begin{proof}
  Let $\phi$ be given by $\phi(n)=(|n|^s,n)\in \mathbb{R}^2$ and $N \ge 1$. Then
\begin{equation}
\begin{aligned}
\int_{\R^2} & \Bigl| \sum_{|n|\geq N} e^{2\ui\pi(|n|^st+nx)} \Bigr|^2\,\mathrm{d}\mu(t,x)\\
=& \sum_{|n|\geq N} |c_n|^2 + 
 \sum_{\substack{|m|,|n|\geq N\\ n\neq m}} c_n\overline{c_m}\int_{\R^2} e^{\ui \scal{\zeta,\phi(n)-\phi(m)}}
  \,\mathrm{d}\mu(\zeta) \notag\\
  =& \sum_{|n|\geq N} |c_n|^2
  +\sum_{\substack{|m|,|n|\geq N\\ m\neq n}} c_n\overline{c_m}\, \widehat{\mu}\bigl(\phi(m)-\phi(n)\bigr).
  \end{aligned}
  \label{eq:1.5}
\end{equation}
Since $|\widehat{\mu}|\lesssim (1+|\xi|)^{-\delta}$ and
$|\phi(m)-\phi(n)|\approx |n-m|+||n|^s-|m|^s|$.  It follows that
\[
\begin{aligned}
  \bigl| \widehat{\mu}\bigl(\phi(m)-\phi(n)\bigr)\bigr|
  \lesssim&\frac{1}{\bigl(|n-m|+||n|^s-|m|^s|\bigr)^\delta}\\
  \lesssim&\begin{cases}
     \frac{1}{N^\delta}&\mbox{when }m=-n,\ |n|,|m|\geq N\\
    \dfrac{1}{\bigl| |n|^s-|m|^s \bigr|^\delta}&\mbox{when } |m|\neq |n|
\end{cases}.
\end{aligned}
\]
We then estimate
\begin{equation}
  \label{eq:thm1genmeas1}
\begin{aligned}
  \Bigl| \sum_{\substack{|m|,|n|\geq N\\ m\neq n}} c_n\overline{c_m}\, \widehat{\mu}\bigl(\phi(m)-\phi(n)\bigr) \Bigr|
  \lesssim & \; \sum_{|n|\geq N}|c_nc_{-n}||\widehat{\mu}\bigl(\phi(-n)-\phi(n)\bigr)|\\
   &  + \; \sum_{|n|\geq N}\sum_{\substack{|m|\geq N\\ |m|\neq |n|}}\frac{|c_nc_m|}{\bigl||n|^s-|m|^s\bigr|^{\delta}}:= S_1+ S_2.
\end{aligned}
\end{equation}

\proofskip The first sum $S_1$ is easy to estimate: writing
$|c_nc_{-n}|\leq\dfrac{|c_n|^2+|c_{-n}|^2}{2}$ and using the bound
$\bigl| \widehat{\mu}\bigl(\phi(-n)-\phi(n)\bigr) \bigr|\lesssim N^{-\delta}$ we
obtain
\begin{equation}
     S_1\lesssim  N^{-\delta}\sum_{|n|\geq N}|c_n|^2.
    \label{eq:thm1genmeas2}
\end{equation}
On the other hand, using the bound
\[
  \bigl| \widehat{\mu}\bigl(\phi(-n)-\phi(n)\bigr) \bigr|=\bigl|
  \widehat{\mu}(0,-2n)) \bigr|\leq\eta
\]
we obtain $ S_1\leq \eta\dst \sum_{|n|\geq N}|c_n|^2$.  In particular
\begin{equation} \label{eq:thm1genmeas1b}
  \begin{aligned}
    (1-\eta)\sum_{|n|\geq N}|c_n|^2- S_2
    \leq & \;  \int_{\R^2} \Bigl| \sum_{|n|\geq N} e^{2\ui\pi(|n|^st+nx)} \Bigr|^2\,\mathrm{d}\mu(t,x) \\
    \leq & (1+\eta)\sum_{|n|\geq N}|c_n|^2+ S_2.
  \end{aligned}
\end{equation}

\proofskip To bound $S_2$ we first write
\begin{equation*}
    \frac{|c_n||c_m|}{\bigl| |n|^s-|m|^s\bigr|^\delta}\le \frac{1}{2} \frac{|c_n|^2}{\bigl| |n|^s-|m|^s\bigr|^\delta}+ \frac{1}{2}\frac{|c_m|^2}{\bigl| |n|^s-|m|^s\bigr|^\delta}.
\end{equation*}
Substituting this into the sum we obtain
\begin{equation*}
    \begin{aligned}
    S_2\leq &\
\frac{1}{2}\sum_{|n|\ge N}\sum_{\substack{|m|\ge N\\|m|\neq |n|}}\frac{|c_n|^2}{\bigl| |n|^s-|m|^s \bigr|^\delta}
+\frac{1}{2}\sum_{|n|\ge N}\sum_{\substack{|m|\ge N\\|m|\neq |n|}}\frac{|c_m|^2}{\bigl| |n|^s-|m|^s \bigr|^\delta}\\
\leq &\, \frac{1}{2}\sum_{|n|\ge N}|c_n|^2\Bigl(\sum_{\substack{|m|\ge N\\|m|\neq |n|}}\frac{1}{\bigl| |n|^s-|m|^s \bigr|^\delta}\Bigr)
    +\frac{1}{2}\sum_{|m|\ge N}|c_m|^2\Bigl(\sum_{\substack{|n|\ge N\\|n|\neq |m|}}\frac{1}{\bigl| |n|^s-|m|^s \bigr|^\delta}\Bigr)
    \end{aligned}
\end{equation*}
with Fubini. Now, applying
Lemma~\ref{lem:sum-unified}~\ref{item:tail-decay}, as $s\delta>1$,
there exists $\sigma=\sigma(s,\delta)$ such that, for $N$ large enough,
\[
\sum_{\substack{|m|\ge N\\|m|\neq |n|}}\frac{1}{\bigl| |n|^s-|m|^s \bigr|^\delta}\lesssim\dfrac{1}{N^\sigma}.
\]
It follows that
$ S_2\dst\lesssim \dfrac{1}{N^\sigma}\sum_{|n|\ge N}|c_n|^2$.  Note
that, if $N\geq 2$, then, for every $\eps>0$ there is an
$s(\mu,N,\eps)$ such that, if $s\geq \zeta(\mu,N,\eps)$, then
$ S_2\leq \eps \dst\sum_{|n|\ge N}|c_n|^2$.  Note that
$\zeta(\mu,N,\eps)\simeq \dfrac{\log\frac{1}{\eps}}{\log N}$.
Injecting the estimate \eqref{eq:thm1genmeas2} of $ S_1$ and this
estimate $ S_2$ into \eqref{eq:thm1genmeas1}, we obtain
\[
\Bigl| \sum_{\substack{|m|,|n|\geq N\\ n\neq m}} c_n\overline{c_m}\, \widehat{\mu}\bigl(\phi(n)-\phi(m)\bigr) \Bigr|
\lesssim \left(\dfrac{1}{N^\delta}+\dfrac{1}{N^\sigma}\right)\sum_{|n|\ge N}|c_n|^2
\leq \dfrac{1}{2}\sum_{|n|\ge N}|c_n|^2
\]
provided $N\geq N(\mu,s)$ for some $N(\mu,s)$ large enough depending only on the constant in the last $\lesssim$
therefore on $\mu,s$ only. Injecting this into \eqref{eq:1.5} gives \eqref{eq:thmhighfreqNbig}.

\proofskip To prove the second estimate, we need to be slightly more careful and use \eqref{eq:thm1genmeas1b}.
Note that this is an inequality with $\leq$'s not with $\lesssim$'s. It is then enough to
take $N\geq 2$ and $s\geq\zeta(\delta,N,(1-\delta)/2)$ so that
$ S_2\leq\dst \dfrac{1-\eta}{2}\sum_{|n|\ge N}|c_n|^2$.
Injecting this into \eqref{eq:thm1genmeas1b} gives \eqref{eq:thmhighfreqsbig}.
\end{proof}

We give two examples. The first one illustrates the need
for the condition $\delta s > 1$ in the theorem, whereas the second
recovers a well-known result.

\begin{example} \label{ex:highfreq-hyp-sharp}
  Fix $s>0$ and $\delta>0$ with $\delta s\le 1$.  Let $\nu$ be a
  probability measure on $\R$ whose Fourier transform $\widehat \nu$
  is real, positive, and such that
  $\widehat\nu(\xi) \asymp (1+|\xi|)^{-\delta}$. Then define
  \[
    \mu \;:=\; \nu\otimes\delta_{0}
  \]
  Then $\widehat\mu(\xi_t,\xi_x)=\widehat\nu(\xi_t)\cdot 1$.  Fix some
  $N\ge 1$ and take the coefficient sequence
  \[
    c_n=\begin{cases} 1, & n=1,2,\dots,N,\\[2pt]
          0,&\text{otherwise,}\end{cases}
  \]
  and define
  \[
    F_N(t,x):=\sum_{n=1}^N e^{2\ui\pi(|n|^s t + n x)}.
  \]
  Expanding the square and using the Fourier representation of \(\nu\)
  then gives
  \begin{align*}
           \int_{\R^2} |F_N(t,x)|^2\,\mathrm d\mu(t,x)
    = & \; \int_{\R}|F_N(t,0)|^2\,\mathrm d\nu(t)\\
    = & \; \sum_{n=1}^N 1  + \sum_{\substack{1\le n\neq m\le N}} \widehat\nu\bigl(-(|n|^s-|m|^s)\bigr)     =: N + \mathcal{S}_N.
  \end{align*}
  Let us assume \(1\le m<n\le N\). Then
  $ |n|^s-|m|^s \;=\; s\,\xi^{\,s-1}\,(n-m)$ for some $\xi \in (n, m)$
  so that
  \[
    \bigl||n|^s-|m|^s\bigr| \asymp n^{\,s-1}\,(n-m).
  \]
  Using the decay of $\widehat\nu$ and restricting to $n/2 < m < n$ we
  obtain
  \[
              \bigl|\widehat\nu\bigl(-(|n|^s-|m|^s)\bigr)\bigr|
    \asymp  \bigl(1 + n^{s-1}\,(n-m)\bigr)^{-\delta}
    \asymp   n^{-(s-1)\delta}\,(n-m)^{-\delta}.
  \]
  It follows that
  \begin{align*}
    \mathcal{S}_N
     = & 2 \sum_{n=1}^N \sum_{m=1}^{n-1} \widehat\nu\bigl(-(|n|^s-|m|^s)\bigr)\\
   \ge & \;   2 \sum_{n=1}^N \sum_{n/2 < m < n}  \widehat\nu\bigl(-(|n|^s-|m|^s)\bigr)\\
\asymp & \;  \Bigl(\sum_{n=1}^N  n^{-(s-1)\delta}\Bigr)\Bigl( \sum_{j=1}^{n/2} j^{-\delta}\Bigr).
  \end{align*}
  Now use standard power-sum asymptotics for this sum.
Thus, for large \(N\), in the case that $\delta<1$ and $\delta s<1$, we find
\[
\mathcal{S}_N \asymp N^{\,2-\delta s}
\]
so that \(\mathcal{S}_N\) grows like \(N^{2-\delta s}\), which
dominates the diagonal term \(N\). In particular, the conclusion of
Theorem~\ref{thm:highfreqmeas} fails, showing the condition
\(\delta s>1\) is close to being sharp.
\end{example}

\begin{example}\label{ex:highfreq-open-bdd-set}
  Let $\omega\subset\R^2$ be an open bounded set. Let $\ffi$ be a
  $\cc^\infty$-smooth function supported on $\omega$ with $\ffi\geq 0$
  and $\dst\int_\omega\ffi(x)\d x=1$.  Then $\mu=\ffi\d x$ satisfies
  the hypothesis of the theorem with $\delta$ arbitrarily large so
  that, if $s>1$ and if $N$ is large enough,
\begin{align*}
\norm{\ffi}_\infty\int_\omega\Bigl|\sum_{|n|\geq N} e^{2\ui\pi(|n|^st+nx)} \Bigr|^2\d t\,\mathrm{d}x
\geq&\int_{\R^2} \Bigl|\sum_{|n|\geq N} e^{2\ui\pi(|n|^st+nx)} \Bigr|^2\,\mathrm{d}\mu(t,x)\\
\geq&\dfrac{1}{2}\sum_{|n|\ge N}|c_n|^2.
\end{align*}
For the upper bound, we take $\psi$ a positive $\cc^\infty$-smooth
compactly supported function such that $\psi=1$ on $\omega$ and
$\lambda=\left(\int_{\R^2}\psi(x,y)\d x\,\mathrm{d}y\right)^{-1}$.  Then
$\mu=\lambda\psi\d x$ satisfies the hypothesis of the theorem with
$\delta$ arbitrarily large so that, if $s>1$ and if $N$ is large
enough,
\begin{align*}
\int_\omega\Bigl| \sum_{|n|\geq N} e^{2\ui\pi(|n|^st+nx)} \Bigr|^2\d t\,\mathrm{d}x
\leq&\frac{1}{\lambda}\int_{\R^2} \Bigl| \sum_{|n|\geq N} e^{2\ui\pi(|n|^st+nx)} \Bigr|^2\,\mathrm{d}\mu(t,x)\\
\leq&\dfrac{3}{2}\left(\int_{\R^2}\psi(t,x)\d t\,\mathrm{d}x\right)\sum_{|k|\ge n}|c_n|^2.
\end{align*}
In summary, for every $s>1$, there is an $N(s)$ and a constant $C(s,\omega)$ such that
if $N\geq N(s)$, then for every $(c_n)_{n\in\Z}\subset\C$,
\[
\frac{1}{C(s,\omega)}\sum_{|n|\ge N}|c_n|^2
\leq \int_\omega \Bigl| \sum_{|n|\geq N} e^{2\ui\pi(|n|^st+nx)} \Bigr|^2\d t\,\mathrm{d}x
\leq C(s,\omega)\sum_{|n|\ge N}|c_n|^2.
\]
Note that when $\omega=[0,T]\times\tilde\omega$ with $\tilde\omega$ an open subset of $[0,1]$ much stronger
results are known, see \cite[Proposition 3.1]{BourgainBurqZworski}.
\end{example}

Now we draw a Corollary of Theorem~\ref{thm:highfreqmeas} whose first part 
is Theorem~\ref{thm:highfreq} from the introduction.

\begin{corollary}\label{cor:highfreq}
Let $s>2$, $\Sigma$ be a smooth curve in $[0,T]\times\T$ with non-zero curvature
and $\sigma$ be the normalized arc length measure on $\Sigma$.
Then there exists $N(s)$ such that, if $N\geq N(s)$, for every $(c_n)_{n\in\Z}\in\ell^2$,
\begin{equation}
    \label{eq:corhighfreqNbigcurve}
\dfrac{1}{2}\sum_{|n|\ge N}|c_n|^2\leq  \int_{\Sigma} \Bigl| \sum_{|n|\geq N} e^{2\ui\pi(|n|^st+nx)} \Bigr|^2\,\mathrm{d}\sigma(t,x) 
\leq \dfrac{3}{2}\sum_{|n|\ge N}|c_n|^2.
\end{equation}
and, when $N\geq2$, there exists $\zeta(N)>0$ and $C>0$ such that, if $s\geq\zeta(N)$, then for every $(c_n)_{n\in\Z}\in\ell^2$,
\begin{equation}
    \label{eq:corhighfreqsbigcurve}
\dfrac{1}{C}\sum_{|n|\ge N}|c_n|^2\leq  \int_{\Sigma} \Bigl| \sum_{|n|\geq N} e^{2\ui\pi(nx+|n|^st)} \Bigr|^2\,\mathrm{d}\sigma(t,x) 
\leq C\sum_{|n|\ge N}|c_n|^2.
\end{equation}
\end{corollary}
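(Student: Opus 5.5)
The plan is to apply Theorem~\ref{thm:highfreqmeas} with $\mu=\sigma$, viewed as a probability measure on $\R^2$ supported on $\Sigma$. The whole argument then reduces to checking its two hypotheses for this measure: a polynomial bound $|\widehat{\sigma}(\xi)|\lesssim(1+|\xi|)^{-\delta}$ with $s\delta>1$, and the existence of some $\eta<1$ with $|\widehat\sigma(\xi)|\le\eta$ for $|\xi|\ge1$.

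\textbf{Decay of $\widehat\sigma$.} I would lift $\Sigma$ from $[0,T]\times\T$ to a compact arc in $\R^2$, splitting it into finitely many pieces if it wraps around the torus. I would then invoke the classical stationary phase decay for compact curves of non-vanishing curvature:
\[
|\widehat{\sigma}(\xi)|\lesssim(1+|\xi|)^{-1/2}.
\]
To prove this, parametrize each piece by arc length $\gamma(u)$ with $u\in[0,L]$, so that $\widehat\sigma(\xi)=L^{-1}\int_0^L e^{-2\ui\pi\scal{\xi,\gamma(u)}}\d u$. Write $\xi=|\xi|\omega$ and $\phi(u)=\scal{\omega,\gamma(u)}$. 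Since $|\phi'|^2+|\phi''|^2\ge c>0$ (because $\gamma'$ is a unit vector, $\gamma''$ is normal to it, and $|\gamma''|\ge\kappa_0>0$), one splits $[0,L]$ into finitely many subintervals on which either $|\phi''|\ge c'$ or $|\phi'|\ge c'$ with $\phi'$ monotone. On the first kind, the second-derivative Van der Corput bound gives $|\xi|^{-1/2}$. On the second kind, Lemma~\ref{lem:vdc} gives $|\xi|^{-1}$. Hence $\delta=\tfrac12$, and the condition $s\delta>1$ is exactly $s>2$. This explains the hypothesis of the corollary.

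\textbf{The $\eta$ condition.} By the remark following Theorem~\ref{thm:highfreqmeas}, any probability measure whose Fourier transform tends to $0$ at infinity satisfies $|\widehat\sigma|\le\eta<1$ on $|\xi|\ge1$. Alternatively, one can argue directly: $\sigma$ is not supported on a set of parallel lines, because the curvature is nonzero, so $|\widehat\sigma(\xi)|<1$ for $\xi\ne0$, and compactness together with the decay gives a uniform $\eta$. One subtlety concerns $x\in\T$: the exponentials $e^{2\ui\pi nx}$ with $n\in\Z$ are $1$-periodic, so integrating over the lifted arc in $\R^2$ is the same as integrating on $\Sigma$. Nothing more is needed there.

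\textbf{Conclusion.} Theorem~\ref{thm:highfreqmeas} then yields \eqref{eq:corhighfreqNbigcurve} directly, with $N(s)$ depending on $s$ and the curve. It also yields \eqref{eq:corhighfreqsbigcurve} from \eqref{eq:thmhighfreqsbig}, taking $C=\max\bigl(\tfrac{2}{1-\eta},\tfrac{3+\eta}{2}\bigr)$ and $\zeta(N)=\zeta(\sigma,N)$. The main obstacle is the uniform stationary-phase estimate with implied constants depending only on the curve. The delicate point is handling, uniformly in the direction $\omega$, the finitely many points where $\phi'$ vanishes; this is the role of the subinterval splitting above.
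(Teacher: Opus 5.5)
Your proposal is correct and follows the paper's proof. As there, you apply Theorem~\ref{thm:highfreqmeas} to the normalized arc-length measure with the classical stationary-phase bound $|\widehat{\sigma}(\xi)|\lesssim(1+|\xi|)^{-1/2}$, so $\delta=\tfrac12$ and $s\delta>1$ becomes $s>2$; the paper leaves implicit your sketch of the decay estimate, your check of the $\eta$-hypothesis via non-vanishing curvature, and the lift from $[0,T]\times\T$ to $\R^2$ using $1$-periodicity in $x$, and you handle all three soundly.
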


\begin{proof}
  Let $\mu$ be the normalized arc-length measure on a compact curve in
  $\R^2$ with non-vanishing curvature.  It is a classical consequence
  of stationary phase (see e.g. Stein\cite[Chap.~VIII]{St93}
  or \cite[Chapter 2]{Grafakos})
  that then $\widehat{\mu}(\xi)=O(|\xi|^{-\onehalf })$.  Therefore, the
  hypothesis of Theorem~\ref{thm:highfreqmeas} are satisfied with
  $\delta=\dfrac{1}{2}$, which enforces our hypothesis $s>2$ (recall
  Example~\ref{ex:highfreq-hyp-sharp}).  
\end{proof}

Finally, we would like to stress that there are also fractal measures
that have Fourier polynomial decay, see for instance
\cite{BakerBanaji} or \cite{LiSahlsten} and references therein for
such measures based on iterated function systems.

\bigskip

\subsection{Observability for highly dispersive equation}\label{Sec:highdisp}

In this section, we are going to improve on Theorem~\ref{thm:highfreq}
by showing that the restriction $N\geq N(s,)$ can be removed when $s$
is large enough. We first state an auxiliary result.

\begin{lemma}\label{lem:simple33}
  Let $\Gamma$ be a $\cc^2$-curve   in the $(t,x)$--plane and let
  $c_{-1},c_0,c_1\in\C$.  Then
\[
  \sum_{n=-1}^1 c_n e^{2\ui\pi(|n|t+nx)}=0 \qquad\text{on }\Gamma
\]
if and only if one of the following holds:
\begin{enumerate}
\renewcommand{\theenumi}{\roman{enumi}}
  \item $\Gamma$ is a horizontal segment $\{(t,x_0):t\in I\}$, $c_0=0$ and 
        $c_{-1}e^{-2\ui\pi x_0}+c_1 e^{2\ui\pi x_0}=0$;
  \item $\Gamma=\{(\beta-x,x)\}$ is a line of slope $-1$, $c_1=0$, and 
        $c_{-1}e^{2\ui\pi\beta}+c_0=0$;
  \item $\Gamma=\{(\beta+x,x)\}$ is a line of slope $+1$, $c_{-1}=0$ and 
        $c_0+c_1e^{2\ui\pi\beta}=0$;
  \item $c_{-1}=c_0=c_1=0$.
\end{enumerate}
\end{lemma}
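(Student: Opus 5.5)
The ``if'' direction is a direct check: in each case (i)--(iv) we substitute the parametrisation of $\Gamma$ and the stated linear relation between the coefficients, and the sum vanishes identically. For instance, on $\{(\beta+x,x)\}$ with $c_{-1}=0$ the sum equals $c_0+c_1e^{2\ui\pi(\beta+2x)}$. Here I notice that the stated condition $c_0+c_1e^{2\ui\pi\beta}=0$ only makes the sum vanish if $x$ is also absorbed into the relation. So I will first reread the phases carefully ($|n|t+nx$ with $n=1$ gives $t+x$) and, if necessary, correct the parametrisation of the lines to $\{(\beta-x,x)\}$ for $c_{-1}=0$, and so on. The correct pairing between the lines and the surviving coefficients comes from requiring the phases of the two nonzero terms to differ by a constant along $\Gamma$. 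The dichotomy itself is robust to this bookkeeping.

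For the ``only if'' direction, put $z=e^{2\ui\pi t}$, $w=e^{2\ui\pi x}$, so the relation on $\Gamma$ reads
\[
c_{-1}z\overline{w}+c_0+c_1zw=0 ,\qquad |z|=|w|=1 .
\]
I would split according to how many coefficients vanish.

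\emph{One nonzero coefficient.} The sum is a nonzero constant times a unimodular function, so it cannot vanish. Hence this case is impossible, which gives (iv).

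\emph{Exactly two nonzero coefficients.} The two exponentials must have a constant ratio along $\Gamma$, so the difference of their phases is constant modulo $1$. Since $\Gamma$ is connected and continuous, that difference is constant, not merely piecewise constant.
\begin{itemize}
\item For $c_{-1},c_1$ the phase difference is $2x$, so $x$ is constant and $\Gamma$ is horizontal. The relation $c_{-1}e^{-2\ui\pi x_0}+c_1e^{2\ui\pi x_0}=0$ follows after cancelling $e^{2\ui\pi t}$.
\item For $c_0,c_1$ the phase difference is $t+x$, which must be constant.
\item For $c_0,c_{-1}$ the phase difference is $t-x$, which must be constant.
\end{itemize}
These give the slope-$\pm1$ lines together with their linear relations.

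\emph{All three nonzero.} Factor out $z$:
\[
c_{-1}\overline{w}+c_1w=-c_0\overline{z}.
\]
Taking moduli gives $|c_{-1}\overline{w}+c_1w|=|c_0|$, a constraint on $w$ alone. The map $w\mapsto |c_{-1}\overline w+c_1w|^2=|c_{-1}|^2+|c_1|^2+2\Re(c_1\overline{c_{-1}}w^2)$ is a nonconstant trigonometric polynomial in $x$, since $c_1\overline{c_{-1}}\neq0$. Its level set therefore consists of finitely many points mod $1$, and by connectedness $x\equiv x_0$ on $\Gamma$. Then $z$ is determined by $\overline z=-(c_{-1}\overline{w_0}+c_1w_0)/c_0$, so $t$ is also constant mod $1$ and hence constant. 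This makes $\Gamma$ a single point, which is excluded for a genuine curve. So this case is impossible.

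The main obstacle I expect is purely the bookkeeping of the phases for the slope-$\pm1$ cases, namely matching each line with the correct surviving pair of coefficients and the correct constant in the relation. The analytic content, the connectedness argument and the modulus trick, is straightforward. I will also state explicitly that $\Gamma$ is nondegenerate (not a point), which is implicit in ``curve''.
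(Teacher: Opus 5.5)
Your argument is correct, and it takes a different route from the paper.

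\textbf{The paper's route.} The paper treats a horizontal $\Gamma$ separately. It then writes a non-horizontal $\Gamma$ as a graph $t=\gamma(x)$ and computes the Wronskian of $f_-=e^{2\ui\pi(\gamma(x)-x)}$, $f_0=1$, $f_+=e^{2\ui\pi(\gamma(x)+x)}$. This Wronskian factors as $-8\pi^2\bigl(\gamma''-2\ui\pi((\gamma')^2-1)\bigr)f_+f_-$. A nontrivial relation forces it to vanish identically, and separating real and imaginary parts gives $\gamma''=0$ and $\gamma'=\pm1$. That proof uses the $\cc^2$ hypothesis essentially. It also relies on a graph parametrization over $x$, which a general non-horizontal curve admits only locally.

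\textbf{Your route.} You split by the number of nonzero coefficients. With two nonzero terms you use the constant ratio; with three you use the modulus identity $|c_{-1}\overline{w}+c_1w|=|c_0|$, which eliminates $t$. This argument is global and needs only that $\Gamma$ be continuous, connected and not a single point. It is essentially the continuous analogue of the modulus computation the paper itself uses in the appendix for the three-point version (Remark~\ref{rem:few_points}). What it gives up is flexibility in the number of terms. It exploits the common factor $e^{2\ui\pi t}$ of the two non-constant exponentials, whereas a Wronskian test is the natural tool to try for longer sums.

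\textbf{The statement's typo.} Your bookkeeping concern is well founded: as stated, the coefficient conditions in (ii) and (iii) are interchanged.
On $\{(\beta-x,x)\}$, where $t+x=\beta$, the correct conditions are $c_{-1}=0$ and $c_0+c_1e^{2\ui\pi\beta}=0$.
On $\{(\beta+x,x)\}$, where $t-x=\beta$, they are $c_1=0$ and $c_{-1}e^{2\ui\pi\beta}+c_0=0$.
This is also what the paper's own computation produces, since $\gamma(x)=x+\beta$ makes $f_-$ constant. State this corrected pairing firmly in your write-up rather than hedging with ``if necessary''.
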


\begin{proof}
If $\Gamma$ is horizontal, say $(t,x_0)$, then the identity becomes
\[
c_0+ c_{-1}e^{2\ui\pi(t-x_0)} + c_1 e^{2\ui\pi(t+x_0)}=0.
\]
Linear independence of $1,e^{2\ui\pi t}$ yields $c_0=0$ and the stated relation between
$c_{-1}$ and $c_1$.

\smallskip\noindent Assume from now on that $\Gamma$ is not
horizontal.  Then we may parametrize $\Gamma$ as $(t,x)=(\gamma(x),x)$
with $\gamma'$ nowhere zero.  Define
\[
F(x)=\sum_{n=-1}^1 c_n e^{2\ui\pi(|n|\gamma(x)+nx)}.
\]
We are assuming that $F\equiv0$ on an interval.

\smallskip

Consider the three functions
\[
f_{-}(x)=e^{2\ui\pi(\gamma(x)-x)},\qquad
f_0(x)=1,\qquad
f_{+}(x)=e^{2\ui\pi(\gamma(x)+x)}.
\]
They satisfy a linear ODE of the form
\begin{align*}
  f_\pm'(x)    = & 2\ui\pi(\gamma'(x) \pm 1) f_\pm(x)\\
  f_\pm''(x)   = & \bigl( 2\ui\pi \gamma''(x) - 4\pi^2 (\gamma'(x) \pm 1)^2 \bigr) f_\pm
\end{align*}
Their Wronskian $W(x)=\det(f_j^{(k)}(x))_{|j|\le 1, \, k=0,1,2}$ can be computed
explicitly and factorizes as
\[
   W(x)=  -8 \pi^2 \bigl( \gamma''(x) - 2\ui\pi (\gamma'(x)^2 -1) \bigr) f_+ f_-.
\]
$W(x)\equiv 0$ if and only if $\gamma''(x) - 2\ui\pi (\gamma'(x)^2 -1) = 0$.
Considering real and imaginary parts, $\gamma''(x)=0$ and $\gamma'(x) = \pm 1$,
thus $\gamma(x) = \pm x + \gamma_0$. The remaining proof is straightforward.
\end{proof}

\begin{remark}
\label{rem:few_points}
The hypothesis on $\Gamma$ is too strong. One can show that
$c_{-1}=c_0=c_1 = 0$ if the map
$(t,x) \mapsto \dst \sum_{n=-1}^1 c_n e^{2\ui\pi(|n|t+nx)}$ vanishes
on three well chosen points, see Section~\ref{sec:few-points} in the
appendix.
\end{remark}

\begin{corollary}\label{cor:simple33}
  Let $s,\alpha>1$ and $p\in \cc^1([0,\infty[)\cap \cc^2(]0,\infty[)$
  satisfying Assumption~\ref{ass:H-alpha}.  Then, for every $T>0$,
  there exists $C=C(p,T)>0$ such that, for every $c_{-1},c_0,c_1\in\C$
\begin{equation}
    \label{eq:corsimple}
\dfrac{1}{C}\sum_{n=-1}^1|c_n|^2\leq 
\int_0^T\abs{\sum_{n=-1}^1c_ne^{2\ui\pi\bigl(|n|^st+np(t)\bigr)}}^2\,\mathrm{d}t
\leq C\sum_{n=-1}^1|c_n|^2.
\end{equation}
\end{corollary}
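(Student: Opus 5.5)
The plan is a finite-dimensional norm-equivalence argument. The map
\[
Q(c_{-1},c_0,c_1):=\Bigl(\int_0^T\Bigl|\sum_{n=-1}^1c_ne^{2\ui\pi(|n|^st+np(t))}\Bigr|^2\,\mathrm{d}t\Bigr)^{\onehalf}
\]
is clearly a seminorm on $\C^3$. The upper bound is trivial: Cauchy--Schwarz gives $Q^2\leq 3T\sum|c_n|^2$.

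For the lower bound it suffices to show that $Q$ is a norm, i.e.\ that $Q(c)=0$ forces $c=0$. Then all norms on $\C^3$ are equivalent, and compactness of the unit sphere gives $C(p,T)$.

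Note that $|n|^s=|n|$ for $n\in\{-1,0,1\}$, so the sum is exactly the one in Lemma~\ref{lem:simple33}. If $Q(c)=0$, continuity of the integrand, since $p\in\cc^1$, shows the sum vanishes identically on $[0,T]$. Hence it vanishes on the curve $\Gamma=\{(t,p(t)):t\in(0,T)\}$, which is $\cc^2$ on the open interval since $p\in\cc^2(]0,\infty[)$. I would restrict to a compact subinterval $[\veps,T-\veps]$ to have a genuine $\cc^2$ curve to which the lemma applies.

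Next, rule out the exceptional cases (i)--(iii) of Lemma~\ref{lem:simple33}.

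Case (i): $\Gamma$ would be a horizontal segment in the $(t,x)$-plane, meaning $p$ is constant on a subinterval. This contradicts $|p'(t)|\geq c_1t^{\alpha-1}>0$ for $t>0$.

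Cases (ii) and (iii): $\Gamma$ would be a line of slope $\pm1$, meaning $p(t)=\pm t+\mathrm{const}$ on a subinterval. Then $p''=0$ there, contradicting $|p''(t)|\geq c_3t^{\alpha-2}>0$.

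So only case (iv) remains, namely $c_{-1}=c_0=c_1=0$, and $Q$ is a norm.

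The main point requiring care is applying Lemma~\ref{lem:simple33} when the vanishing holds only on a sub-arc. The lemma is phrased for the curve as given, but its proof is local, via the Wronskian on an interval. It therefore applies on any subinterval $[a,b]\subset(0,T)$ where $p$ is $\cc^2$.

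It also requires care to match the lemma's parametrization, in which $\Gamma$ is written as $(\gamma(x),x)$ with $\gamma'\neq0$. Since $p'\neq0$ on $(0,T)$, $p$ is invertible there, and we may take $\gamma=p^{-1}$. This is $\cc^2$ with $\gamma''=-p''/(p')^3\neq0$, so the Wronskian factor $\gamma''-2\ui\pi(\gamma'^2-1)$ is nonzero. The three exponentials are then linearly independent on that interval, which gives $c=0$ directly.
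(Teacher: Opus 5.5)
Your proposal is correct and follows the paper's own argument. The paper also uses finite-dimensional norm equivalence on $\C^3$, with Lemma~\ref{lem:simple33} excluding a nontrivial vanishing sum because, by Assumption~\ref{ass:H-alpha}, the curve $\{(t,p(t))\}$ is neither horizontal nor a line of slope $\pm1$. Your additional steps fill in details the paper leaves implicit: restricting to a subinterval where $p$ is $\cc^2$ (since $p''$ may blow up at $0$ when $\alpha<2$), and checking via $\gamma=p^{-1}$ that $\gamma''=-p''/(p')^3\neq0$, which keeps the Wronskian nonzero.
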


\begin{proof}[Proof of Corollary~\ref{cor:simple33}]
  First note that, when $n=-1,0$ or $1$, $|n|^s=|n|$ so that the
  integral in \eqref{eq:corsimple} does not depend on $s$. Next, the
  curve $\bigl\{\bigl(t,p(t)\bigr), t\in[0,T]\bigr\}$ is not a line
  segment so that, due to Lemma~\ref{lem:simple33},
  $\dst\sum_{n=-1}^1c_ne^{2\ui\pi\bigl(|n|^st+np(t)\bigr)}$ can not be
  zero on all of $[0,T]$. It follows that
\[
\dst (c_{-1},c_0;c_1)\to \int_0^T\abs{\sum_{n=-1}^1c_ne^{2\ui\pi\bigl(|n|^st+np(t)\bigr)}}^2\,\mathrm{d}t
\]
is a norm on $\C^3$ so that it is equivalent to the $\ell^2$-norm.
\end{proof}

\begin{theorem}\label{th:merge}
  Let $\alpha>1$ and $p\in \cc^1([0,\infty[)\cap \cc^2(]0,\infty[)$
  satisfying Assumption~\ref{ass:H-alpha}.  Then, for every $T>0$,
  there exists $s_T>1$ and $C_T>0$ such that, if $s>s_T$, then for
  every $(c_n)_{n\in\Z}\in\ell^2$
\[
\frac{1}{C_T}\sum_{n\in\Z}|c_n|^2\leq \int_0^T\abs{\sum_{n\in\Z}c_ne^{2\ui\pi\bigl(np(t)+|n|^st\bigr)}}^2\,\mathrm{d}t
\leq C_T\sum_{n\in\Z}|c_n|^2.
\]
\end{theorem}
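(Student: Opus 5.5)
We split the sum into the low-frequency block $|n|\le 1$ and the high-frequency block $|n|\ge 2$, and merge the two estimates using the strong decorrelation available when $s$ is large. Write $u=u_{\mathrm{low}}+u_{\mathrm{high}}$ with
\[
u_{\mathrm{low}}(t)=\sum_{|n|\le1}c_ne^{2\ui\pi(np(t)+|n|^st)},\qquad u_{\mathrm{high}}(t)=\sum_{|n|\ge2}c_ne^{2\ui\pi(np(t)+|n|^st)}.
\]
For the low block, Corollary~\ref{cor:simple33} gives, for the fixed $T$, a constant $C_0=C(p,T)$ with $C_0^{-1}\sum_{|n|\le1}|c_n|^2\le\|u_{\mathrm{low}}\|_{L^2(0,T)}^2\le C_0\sum_{|n|\le1}|c_n|^2$. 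This is independent of $s$, since $|n|^s=|n|$ for $|n|\le1$.

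For the high block, we apply Theorem~\ref{thm:highfreqmeas}, in the form \eqref{eq:thmhighfreqsbig} of Corollary~\ref{cor:highfreq}, with $N=2$. We take $\mu$ to be the push-forward of $\frac{\mathrm{d}t}{T}$ on $[0,T]$ by $t\mapsto(t,p(t))$, which is comparable to normalized arc length since $p'$ is bounded on $[0,T]$. The curve has non-vanishing curvature on $(0,T]$ because $|p''(t)|\ge c_3t^{\alpha-2}$. The only delicate point is $t=0$ when $\alpha>2$, where curvature degenerates. There we either work on $[\varepsilon,T]$, which only decreases the lower-bound integral and is harmless for the upper bound after enlarging the constant, or use the van der Corput argument of Proposition~\ref{prop:vdc} directly. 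Either way $|\widehat\mu(\xi)|\lesssim|\xi|^{-1/2}$ for $\xi=(\lambda,k)$ with $|k|\ge1$ or $|\lambda|\ge1$, which is all that is used. Hence there is $\zeta$ such that for $s\ge\zeta$,
\[
\frac{1-\eta}{2}\sum_{|n|\ge2}|c_n|^2\le \frac1T\|u_{\mathrm{high}}\|^2\le\frac{3+\eta}{2}\sum_{|n|\ge2}|c_n|^2.
\]
The upper bound of the theorem then follows from the triangle inequality.

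For the lower bound we control the cross term $2\Re\langle u_{\mathrm{low}},u_{\mathrm{high}}\rangle_{L^2(0,T)}=2\Re\sum_{|m|\le1,|n|\ge2}c_n\overline{c_m}\,J_{n,m}$, where $J_{n,m}=\int_0^Te^{2\ui\pi((n-m)p(t)+(|n|^s-|m|^s)t)}\,\mathrm{d}t$. For $|n|\ge2$ and $|m|\le1$ we have $|n|^s-|m|^s\ge|n|^s-1\ge 2^{s-1}|n|^{s}\cdot\tfrac12$, which is huge, while $|n-m|\le|n|+1$. The phase derivative $(|n|^s-|m|^s)+(n-m)p'(t)$ is bounded below by $\tfrac12(|n|^s-1)$ as soon as $|n|^s-1\ge 2(|n|+1)c_2T^{\alpha-1}$. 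This holds for all $|n|\ge2$ once $s\ge s_T$, with $s_T$ depending on $c_2,T,\alpha$; for instance $2^s\ge 8c_2T^{\alpha-1}+2$ together with growth in $|n|$ suffices. Since $p'$ is monotone, Lemma~\ref{lem:vdc} gives $|J_{n,m}|\le 6/(|n|^s-1)$. Cauchy--Schwarz then bounds the cross term by
\[
\Bigl(\sum_{|m|\le1}|c_m|^2\Bigr)^{1/2}\Bigl(\sum_{|n|\ge2}|c_n|^2\Bigr)^{1/2}\cdot 6\sqrt3\Bigl(\sum_{|n|\ge2}(|n|^s-1)^{-2}\Bigr)^{1/2},
\]
and the last factor tends to $0$ as $s\to\infty$. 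Choosing $s_T$ so large that this factor is below $\tfrac12\min\bigl(C_0^{-1},T(1-\eta)/2\bigr)$ gives
\[
\|u\|^2\ge \|u_{\mathrm{low}}\|^2+\|u_{\mathrm{high}}\|^2-\varepsilon_s\sum|c_n|^2\ge C_T^{-1}\sum|c_n|^2.
\]

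The main obstacle is justifying that the curve measure fits Theorem~\ref{thm:highfreqmeas} uniformly, in particular with the $s$-threshold $\zeta$ independent of $s$ in the decay constant, and handling the degeneracy of curvature at $t=0$. If the measure-theoretic route is awkward, the fallback is to redo the high block directly. In that case Proposition~\ref{prop:vdc} gives $|I_{n,m}|\lesssim ||n|^s-|m|^s|^{-1}$ on $S_{\mathrm{good}}$. The set $S_{\mathrm{bad}}$ restricted to $|n|,|m|\ge2$ becomes empty for $s$ large relative to $\tau=2c_2T^{\alpha-1}$, because $\bigl||n|^s-|m|^s\bigr|/|n-m|\ge (2^s-1)/\dots$ grows with $s$. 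The pairs $(n,-n)$ are handled by Step 2 of that proposition with $N_T$ small, or by taking them into the $C_0$-type finite-dimensional argument if $N_T>2$. The resulting Schur sums $\sup_n\sum_m||n|^s-|m|^s|^{-1}$ over $|n|,|m|\ge2$ tend to $0$ as $s\to\infty$.
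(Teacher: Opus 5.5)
Your proposal is correct and follows essentially the paper's proof. You split off $|n|\le1$ via Corollary~\ref{cor:simple33}, treat $|n|\ge2$ via Theorem~\ref{thm:highfreqmeas}/Corollary~\ref{cor:highfreq} for $s\ge\zeta$, and absorb the cross terms using van der Corput bounds $\lesssim(|n|^s-1)^{-1}$ whose sums vanish as $s\to\infty$; the only differences are that the paper quotes the upper bound from Theorem~\ref{thm:schroedinger} and phrases the cross-term estimate through $S_{\mathrm{good}}$ in Proposition~\ref{prop:vdc}. There are two harmless slips: $|n|^s-1\ge 2^{s-2}|n|^s$ is false (you only need $|n|^s-1\ge\frac12|n|^s$), and for $\alpha>2$ on all of $[0,T]$ the curve measure decays only like $|\xi|^{-1/\alpha}$ (cf.\ Step~2 of Proposition~\ref{prop:vdc}), not $|\xi|^{-1/2}$, which does not matter because any positive decay exponent suffices once $s$ is large.
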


\begin{proof}
  The upper bound was proved in Theorem~\ref{thm:schroedinger}; we
  prove the lower bound.  By the Corollary~\ref{cor:highfreq} on high
  frequencies, there exist constants $\zeta>0$ and $C_+$, depending
  only on $p,\alpha,T$,  such that for every $s\ge\zeta$ and every
  sequence $(c_n)_{|n|\ge2}\in\ell^2$
  \[
    \int_0^T\Big|\sum_{|n|\ge2} c_n e^{2 \ui \pi (n p(t)+|n|^s t)}\Big|^2 \,dt
    \ge \frac{1}{C_+}\sum_{|n|\ge2}|c_n|^2.
  \]
  Also, by Corollary~\ref{cor:simple33} there is $C_-$, depending only
  on $p,T$, such that for every triple $(c_{-1},c_0,c_1)$
  \[
    \int_0^T\Big|\sum_{|n|\le 1} c_n e^{2\ui\pi  (n\, p(t)+|n|^s t)}\Big|^2 \,dt
    \ge \frac{1}{C_-}\sum_{|n|\le 1} |c_n|^2.
  \]
  Let $\phi_n(t):=e^{2 \ui \pi (n\, p(t)+|n|^s t)}$, and write
  \[
    u_-(t):=\sum_{|n|\le 1} c_n\phi_n(t),\qquad
    u_+(t):=\sum_{|n|\ge 2} c_n\phi_n(t),\qquad u=u_-+u_+.
  \]
  Combining the two lower bounds above we get
  \begin{align*}
    \int_0^T |u|^2 \,dt
    = & \; \int_0^T|u_-(t)|^2 + |u_+(t)|^2 + 2\Re(u_-(t)\overline{u_+}(t))\,dt \\
  \ge & \; \frac{1}{C_-}\sum_{|n|\le 1}|c_n|^2
      +\frac{1}{C_+}\sum_{|n|\ge 2}|c_n|^2 - 2\Big|\int_0^T u_-(t)\overline{u_+}(t) \,dt\Big|.
  \end{align*}
  We estimate the cross term by Cauchy-Schwarz and the oscillatory
  integrals from Proposition~\ref{prop:vdc}. To stress the dependence
  on $s$, and the fact that $T$ is fixed, we here denote them by
  $I_{m,n}^{(s)}$.
    \begin{equation}    \label{eq:absorb-high-disp}
  \begin{aligned}
        & \; \Big|\int_0^T u_-(t)\overline{u_+}(t)\,dt\Big|
    \; \le \;  \sum_{|m| \le 1}\sum_{|n|\ge2} |c_m||c_n| \; I_{m,n}^{(s)} \\
    \le & \; \frac12\Big(\sup_{|m| \le 1}\sum_{|n|\ge2} I_{m,n}^{(s)}\Big)\sum_{|m|\le1}|c_m|^2
    +\frac12\Big(\sup_{|n|\ge2}\sum_{|m|\le 1} I_{m,n}^{(s)}\Big)\sum_{|n|\ge2}|c_n|^2 .
  \end{aligned}
  \end{equation}
  Fix $T>0$ and set $\tau=2c_2T^{\alpha-1}$, as in Proposition~\ref{prop:vdc}.  
  For each fixed $|m| \le 1$ and each $|n|\ge2$
  the ratio $R_{m,n}(s)=(|n|^s-|m|^s)/(n-m)$ tends to $+\infty$ when $n\ge2$
  and to $-\infty$ when $n\le-2$ as $s\to\infty$. Hence there exists
  $s_T$ such that for each $s\ge s_T$, every pair
  $(m,n)$ with $|m|\le1$, $|n|\ge2$ belongs to the set $S_{\mathrm{good}}$.
  By Proposition~\ref{prop:vdc} (case (i)) we thus have for those pairs
  the uniform bound
  \[
    I_{m,n}^{(s)}\le C'_T\frac{1}{\big||n|^s-|m|^s\big|},
  \]
  where the implied constant $C'_T$ depends only on $T$ and $p$, but  not on $s$ for $s\ge s_T$.

  \proofskip Using $|n|^s-|m|^s\ge |n|^s - 1$ for $n\ge2$, $|m|\le 1$
  yields $I_{m,n}^{(s)} \lesssim \frac{1}{n^2-1}$. This ensures summability over
  $n$, and via a simple dominated convergence argument,
  \[
    \sup_{|m| \le 1} \sum_{|n|\ge2} I_{m,n}^{(s)}\xrightarrow[s\to\infty]{} 0,
    \qquad
    \sup_{|n|\ge2}\sum_{|m|\le 1} I_{m,n}^{(s)}\xrightarrow[s\to\infty]{} 0.
  \]
  Therefore we can choose $s_T$ so large that both suprema are below
  $1/(2(C_-+C_+))$.  For such $s$, inequality
  \eqref{eq:absorb-high-disp} gives
  $\big|\int_0^T u_-(t)\overline{u_+}(t)\,dt\big|\le
  \frac{1}{4(C_-+C_+)}\sum_{n}|c_n|^2$.  Plugging into the lower bound
  for $\int_0^T|u|^2$ and absorbing constants yields
  \[
    \int_0^T |u(t)|^2\,dt \ge \frac{1}{C_T}\sum_{n\in\Z}|c_n|^2
  \]
  for some $C_T$ depending only on $p,\alpha,T$.  This proves the claim.
\end{proof}

\section{Fractional Schrödinger equality:  low-frequencies}\label{Sec:lowfreq}

Let $s>0$ be fixed and $N \ge 1$ be a natural number. Let
$(\lambda_n)_{n=-N\ldots N}$ be a finite {\it real} sequence of {\em distinct} ($\lambda_n \not=\lambda_m$ when $n\not=m$) numbers that will be called frequencies. Let
\begin{equation}  \label{eq:low-frequency-F}
  F(t,x) := \sum_{n=-N}^N  c_n \,  e^{2\ui\pi\, (|n|^s t + \lambda_n x)},
\end{equation}
where $(c_n)_{n=-N,\ldots,N}\subset\C$ is a finite sequence of complex coefficients.

In this section, we are investigating curves $\Gamma=\bigl\{\bigl(t,\gamma(t)\bigr),\ t\in[0,T]\bigr\}$
on which a function $F$ of the form \eqref{eq:low-frequency-F} can {\em not} vanish.
As for Corollary~\ref{cor:simple33}, if $\Gamma$ is such a curve then, for every $T>0$
\[
(c_n)_{n=-N,\ldots,N}\to \int_0^T\abs{\sum_{n=-N}^N  c_ne^{2\ui\pi\, \bigl(|n|^s t + \lambda_n \gamma(t)\bigr)}}^2\,\mathrm{d}t
\]
is a norm on $\C^{2N+1}$. Therefore, it is equivalent to the $\ell^2$ norm {\it i.e.} there exists
$C=C(N,\Gamma)$ such that
\begin{equation}     \label{eq:equivnorm}
\dfrac{1}{C}\sum_{n=-N}^N|c_n|^2\leq
\int_0^T\abs{\sum_{n=-N}^N  c_ne^{2\ui\pi \bigl(|n|^s t + \lambda_n \gamma(t)\bigr)}}^2\,\mathrm{d}t
\leq C\sum_{n=-N}^N|c_n|^2.
\end{equation}
Results in this section are summarized in
Theorem~\ref{thm:lowfreq-intro} in the particular case of
$\lambda_n=n$.  The first case in that theorem is a direct consequence
of the following lemma:

\begin{lemma}\label{lem:observation-curves-are-holomorphic}
Let $F$ be a function of the form \eqref{eq:low-frequency-F}.
  Assume that there is some continuous function $\gamma: [a,b] \to \C$
  satisfies that $F\bigl(t, \gamma(t)\bigr) = 0$.  Then either $F=0$ or
  $\gamma$ extends holomorphically to a complex neighborhood of
  $(a,b)$.
\end{lemma}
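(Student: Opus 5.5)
The plan is to regard $F$ as an entire function $G(t,x)$ on $\C^2$ and to use local analytic geometry of its zero set. First, let us record a non-degeneracy fact. For a fixed $t_0\in\C$, the function $x\mapsto G(t_0,x)=\sum_n c_n e^{2\ui\pi|n|^st_0}e^{2\ui\pi\lambda_n x}$ is an exponential polynomial in $x$ with pairwise distinct frequencies $\lambda_n$. The exponentials $e^{2\ui\pi\lambda_n x}$ are linearly independent, so $G(t_0,\cdot)\equiv 0$ forces $c_ne^{2\ui\pi|n|^st_0}=0$ for all $n$, hence all $c_n=0$ and $F\equiv0$. From now on we assume $F\not\equiv 0$. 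Then for each $t_0\in[a,b]$ the function $x\mapsto G(t_0,x)$ vanishes at $x_0=\gamma(t_0)$ to some finite order $k=k(t_0)\ge1$.

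\emph{Regular points.} If $k(t_0)=1$, i.e.\ $\partial_xG(t_0,\gamma(t_0))\neq0$, the holomorphic implicit function theorem gives a holomorphic $g$ near $t_0$ and a neighbourhood $U$ of $x_0$ such that, for $t$ close to $t_0$, the only zero of $G(t,\cdot)$ in $U$ is $g(t)$. Continuity of $\gamma$ keeps $\gamma(t)\in U$ for $t$ near $t_0$, so $\gamma=g$ there and $\gamma$ extends holomorphically near $t_0$. In general, Weierstrass preparation at $(t_0,x_0)$ writes $G=u\cdot P$ with $u(t_0,x_0)\ne0$ and $P(t,x)=(x-x_0)^k+\sum_{j<k}a_j(t)(x-x_0)^j$ a Weierstrass polynomial. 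We factor $P$ into irreducible distinguished polynomials and note that only finitely many factors occur; the square-free part has a discriminant $\Delta(t)$ that is holomorphic and not identically zero. Away from the isolated zeros of $\Delta$ the roots of $P(t,\cdot)$ are simple, so the argument above applies. Consequently $\gamma$ is real-analytic (hence holomorphically extendable) on $(a,b)$ minus a set $E$ that is discrete in $(a,b)$. Near each point of $E$, $\gamma$ coincides on each side with one holomorphic root branch $\rho_\pm$ of $P$.

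\emph{Singular points.} This is the main obstacle: we must show that at $t_0\in E$ the continuous function $\gamma$ does not switch branches or pass through a genuine branch point, so that no isolated singularity remains. By Puiseux, near $t_0$ each root branch has the form $x=x_0+\sum_{j\ge1} b_j (t-t_0)^{j/q}$. Continuity on a real interval with holomorphic restrictions on both sides means $\rho_-$ on $(t_0-\varepsilon,t_0)$ and $\rho_+$ on $(t_0,t_0+\varepsilon)$ are determinations of Puiseux series, with matching value $x_0$ at $t_0$. To exclude $\rho_-\neq\rho_+$ and fractional exponents, we will exploit the special exponential structure of $G$. In $(X,Y)=(e^{2\ui\pi t},e^{2\ui\pi x})$ coordinates $G$ is a sum of generalized monomials, so its zero set is a real-translation-stable analytic set with explicit asymptotic directions, determined by the Newton polygon of the points $(|n|^s,\lambda_n)$. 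We plan to compare the Puiseux expansions with the Newton-polygon expansion of $G$ at $(t_0,x_0)$: after subtracting $c_n$-weighted Taylor expansions, the leading edge of the local Newton polygon must be realized by a single edge, and we will try to show that this forces $q=1$ and a unique branch through $(t_0,x_0)$ on the real $t$-axis that is compatible with continuity from both sides. If this comparison only yields that both one-sided branches are holomorphic with the same value at $t_0$, the fallback is an identity-theorem argument. On a complex neighbourhood minus the slit $\{t_0\}$, the function $\gamma$ extends as $\rho_-$ from the left. We then analytically continue $\rho_-$ along a small half-circle in the upper half-plane and check, using the monodromy of the irreducible factor of $P$ containing the branch, that it arrives at $\rho_+$. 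Since continuity on the real line glues them, this would give a single-valued bounded holomorphic function on a punctured disc, and Riemann's removable singularity theorem finishes the local step.

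Finally, combining the regular points with the removable singular points yields holomorphic extensions on small discs centred at every $t\in(a,b)$. These extensions agree on overlaps because they agree with $\gamma$ on real intervals, so their union is a holomorphic extension of $\gamma$ to a complex neighbourhood of $(a,b)$, as claimed.
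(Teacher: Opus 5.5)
\textbf{There is a genuine gap, and it cannot be closed: the lemma as stated is false.} Your treatment of regular points is sound. It correctly shows that if $F\not\equiv0$, then $\gamma$ extends holomorphically off a discrete set $E\subset(a,b)$. The step you flag as the main obstacle, however, fails.

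\emph{First counterexample ($q=2$).} Take $N=1$, $\lambda_n=n$, $c_{\pm1}=1$, $c_0=2$, so $F(t,x)=2+2e^{2\ui\pi t}\cos(2\pi x)$ for every $s>0$.
\begin{itemize}
\item With $x=\frac12+\xi$, the equation $F=0$ near $(0,\frac12)$ reads $\cos(2\pi\xi)=e^{-2\ui\pi t}$. Equivalently $\xi^2=\omega(t)$, where $\omega$ is holomorphic with $\omega(0)=0$ and $\omega'(0)=\ui/\pi$.
\item Since $\omega(t)\neq0$ for small real $t\neq0$, choose continuous square roots $r$ of $\omega$ on $[-\varepsilon,0)$ and on $(0,\varepsilon]$, and set $r(0)=0$.
\item Then $\gamma=\frac12+r$ is continuous on $[-\varepsilon,\varepsilon]$, $F(t,\gamma(t))=0$ there, and $F\not\equiv0$.
\item Yet $\gamma$ has no holomorphic extension near $0$. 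Otherwise $(\gamma-\frac12)^2=\omega$ would hold on a disc, while $\omega$ has a simple zero.
\end{itemize}
Here $q=2$. Continuing $\rho_-$ over the upper half-circle and over the lower half-circle yields the two opposite branches. So neither your Newton-polygon plan (forcing $q=1$) nor the monodromy fallback can work: continuity along the real axis carries no monodromy information.

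\emph{Second counterexample (branch switching with $q=1$, real-valued $\gamma$).} Take $c_0=0$, $c_{\pm1}=c_{\pm2}=\frac12$ and $(\lambda_{-2},\dots,\lambda_2)=(-1,3,0,1,-3)$. Then $F(t,x)=2e^{\ui\pi(1+2^s)t}\cos\bigl(2\pi(ut+2x)\bigr)\cos(2\pi x)$ with $u=\frac{1-2^s}{2}\neq0$. Its real zero set contains the crossing lines $x=\frac14$ and $ut+2x=\frac14$. A $\gamma$ that follows one line and then the other is continuous but has a corner, so it is not analytic at the crossing. Thus $\rho_-\neq\rho_+$ genuinely occurs.

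\textbf{The paper's own proof has the same gap.} It applies the implicit function theorem to $F_m=\partial_x^mF$ on $O_m=\{t:F_m(t,\gamma(t))\neq0\}$. It then uses a Vandermonde system to show that the $O_m$ cover $(a,b)$.
\begin{itemize}
\item The Vandermonde step amounts to your observation that the vanishing order $k(t_0)$ is finite (in fact at most $2N$).
\item The implicit function theorem step is only legitimate for $m=1$. For $m\ge2$ it describes the zero set of $F_{m-1}$, which $\gamma$ has no reason to follow. In the first example, $t=0\in O_2\setminus O_1$.
\end{itemize}
So what either argument actually proves is that $\gamma$ extends holomorphically outside a discrete subset of $(a,b)$. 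The full conclusion requires additional hypotheses on $\gamma$ or on $F$.
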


\begin{proof}
Observe first that $F_0:=F$ as well as all its partial derivatives
\[
  F_m(t,x):=\dfrac{\partial^m F}{\partial x^m}=\sum_{n=-N}^N
  (2\ui\pi\, \lambda_n)^m c_n \, e^{2\ui\pi \, (|n|^s t + \lambda_n x)}.
\]
are holomorphic functions over $\C^2$. For $m\ge 1$ define
  \[
    O_m := \{ t \in (a,b): \quad   F_m\bigl( t, \gamma(t)\bigr) \not= 0 \}
  \]
  Note that $O_m$ is an open set. From the complex variable version
  of the implicit function theorem (see e.g. \cite{IlyashenkoYakovenko}) applied to $F_m$, if $t\in O_m$,
  $\gamma$ extends holomorphically to some disc
  $D\bigl(t,r(t)\bigr)\subset\C $ of $t$.  We then have the following
  alternative: 
 \proofskip Either $(a,b)=\dst\bigcup_{m=1}^{2N{+}1}O_m$
 and $\gamma$ extends holomorphically to $\dst \bigcup_{t\in(a,b)}D\bigl(t,r(t)\bigr)$.
 Otherwise there is a $t\in \dst\bigcap_{m=1}^{2N+1}O_m^c$, that is, for $m=1,\ldots,2N+1$,
\begin{equation}    \label{eq:vandermonde}
    \sum_{n=-N}^N (2\ui\pi\,\lambda_n)^m c_n \,  e^{2\ui\pi\, \bigl(|n|^s t + \lambda_n \gamma(t)\bigr)}=0.
\end{equation}
Now set
\[
  V=\Bigl((2\ui\pi\,\lambda_n)^m \Bigr)_{\substack{m=1,\ldots,2N+1\\ -N\leq
      n\leq N}},
\]
the $(2N+1)\times(2N+1)$ Vandermonde matrix associated to
$2\ui\pi\,\lambda_{-N},\ldots, 2\ui\pi\,\lambda_N$. As those numbers are all
different, $V$ is invertible.  Further, define the $(2N+1)\times 1$
column vector
\[
  C=\Bigl( c_ne^{2\ui\pi\, \bigl(|n|^s t + \lambda_n    \gamma(t)\bigr)}\Bigr)_{n=-N,\ldots,N},
\]
one can then rewrite \eqref{eq:vandermonde} as $V\, C=0$ so that $C=0$,
which means $c_{-N}=\cdots=c_N=0$, i.e. $F=0$.
\end{proof}

The above lemma tells us that only curves parameterized by an
holomorphic function needs to be considered when investigating an
equation of the form 
\begin{equation*}
    F\bigl(t, \gamma(t)\bigr)=0, \quad \forall t\in (0,T).
  \end{equation*}
  
  One may for instance consider $\gamma$ that satisfies
  Assumption~\ref{ass:H-alpha} and is not an analytic function over
  $(0,T)$.  Although this assumption applies to a broad class of
  functions, it excludes some simple examples such as polynomials, and
  power functions $p(t)=t^\alpha$.  The remaining of this section is
  devoted to this issue. More precisely, we prove the following:

\begin{theorem}\label{thm:merhol}
Let $s > 0$, $N\ge 0$, and let $(\lambda_n)_{n=-N}^N\subset\R$ be pairwise
distinct frequencies and $(c_n)_{n=-N}^N\subset\C$. Define
\[
F(t,x)=\sum_{n=-N}^N c_n\,e^{2\ui\pi(|n|^s t+\lambda_n x)}, t,x\in\C.
\]
Let $T>0$ and let $\gamma$ be a function that satisfies one of the following assumptions:
\begin{enumerate}
\renewcommand{\theenumi}{\roman{enumi}}
\item there exists an open neighborhood $\Omega$ of the real interval $[0,T]$
and $a\in\Omega\setminus\R$ such that $\gamma$ is meromorphic over $\Omega$
with a pole at $a$;

\item or $\gamma$ is an entire function, but not a polynomial of degree $\leq 1$.
\end{enumerate}
If
\[
  \{ t \in (0, T):    F\bigl(t,\gamma(t)\bigr)=0 \}
\]
has an accumulation point, then $F\equiv 0$, i.e. $c_n=0$ for all $n$.
\end{theorem}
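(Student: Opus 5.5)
The plan is to reduce to the vanishing of a single one-variable function and then exploit the singular behaviour of $\gamma$, either at a pole or at infinity. Set
\[
G(t):=F\bigl(t,\gamma(t)\bigr)=\sum_{n=-N}^N c_n\,e^{g_n(t)},\qquad g_n(t):=2\ui\pi\bigl(|n|^s t+\lambda_n\gamma(t)\bigr).
\]
In case (i), $G$ is meromorphic on $\Omega$ with poles only at the poles of $\gamma$, so it is holomorphic on $\Omega$ minus a discrete set. In case (ii), $G$ is entire. The zero set of $G$ has an accumulation point in $(0,T)$, where $G$ is holomorphic. Shrinking $\Omega$ to the connected component containing $[0,T]$ (which still contains $a$, or else we pick a connected neighbourhood containing both), the identity theorem gives $G\equiv 0$ on its domain of holomorphy. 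It then remains to show that $G\equiv0$ forces all $c_n=0$.

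\emph{Case (ii).} I would invoke Borel's theorem on exponential sums from Nevanlinna theory. It states that if $\sum_j c_j e^{g_j}\equiv 0$ with constants $c_j$ and entire $g_j$, and $g_j-g_k$ is nonconstant for every $j\neq k$, then all $c_j=0$. Here
\[
g_n-g_m=2\ui\pi\bigl((|n|^s-|m|^s)t+(\lambda_n-\lambda_m)\gamma(t)\bigr).
\]
Since $\lambda_n\ne\lambda_m$ and $\gamma$ is not a polynomial of degree $\le1$, this function is not affine in $t$, hence nonconstant. Borel's theorem therefore applies directly and gives $c_n=0$ for all $n$. If one prefers an elementary argument for polynomial $\gamma$ of degree $d\ge2$ with leading coefficient $b$, one can take the ray $t=re^{\ui\theta}$ with $be^{\ui d\theta}\in -\ui\,\R_{>0}$. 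Along it, $|e^{2\ui\pi\lambda_n\gamma(t)}|=e^{2\pi\lambda_n|b|r^d(1+o(1))}$, while $|e^{2\ui\pi|n|^st}|\le e^{Cr}$. The term with the largest $\lambda_n$ among those with $c_n\neq0$ then dominates all others by a factor $e^{cr^d}$, which contradicts $G\equiv0$.

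\emph{Case (i).} Let $a\notin\R$ be a pole of order $k\ge1$ of $\gamma$, so $\gamma(t)=b(t-a)^{-k}(1+O(t-a))$ with $b\ne0$. Near $a$ the map $\gamma$ is a $k$-to-$1$ covering of a punctured neighbourhood of $a$ onto a neighbourhood of $\infty$. Hence there is a path $t_R\to a$ along which $\gamma(t_R)=-\ui R$ with $R\to+\infty$. Along this path the factors $e^{2\ui\pi|n|^st_R}$ stay bounded above and below, since $t_R$ stays in a compact set. Meanwhile $|e^{2\ui\pi\lambda_n\gamma(t_R)}|=e^{2\pi\lambda_nR}$. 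Suppose some $c_n\neq0$, and let $n_0$ be the index maximizing $\lambda_n$ among those with $c_n\ne0$; it is unique because the $\lambda_n$ are distinct. Then
\[
|G(t_R)|\ge |c_{n_0}|\,c\,e^{2\pi\lambda_{n_0}R}-C\,e^{2\pi\lambda' R},
\]
where $\lambda'<\lambda_{n_0}$ is the next largest such value. This is nonzero for large $R$, contradicting $G\equiv0$ on a punctured neighbourhood of $a$.

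The main obstacle I expect is making case (i) rigorous: one must construct the path $t_R$ by inverting $(t-a)^{-k}$ via a local $k$-th root, with $\gamma(t)^{-1/k}$ conformal near $a$. One must also make sure $G\equiv0$ holds near $a$, which requires that $a$ lies in the same connected component of $\Omega\setminus\{\text{poles}\}$ as $(0,T)$. For case (ii) the only delicate point is quoting Borel's theorem correctly.
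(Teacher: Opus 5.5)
Your proof is correct. Case (i) is essentially the paper's argument. The paper picks $z_k=a+\frac1k e^{\frac{\ui}{m}(\theta+\pi/2)}$ so that $\gamma(z_k)\sim-\ui\rho k^m$, whereas you take exact preimages $\gamma(t_R)=-\ui R$ through a local $k$-th root of $1/\gamma$. In both, the term with the largest $\lambda_n$ among the nonzero $c_n$ dominates.

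Case (ii) is where you genuinely differ. The paper stays elementary and reruns the same dominant-term mechanism at infinity. After normalizing $\gamma(0)=0$ and writing $\gamma(1/z)=\frac1z\bigl(a_1+h(z)\bigr)$, it looks for $z_k\to0$ with $\ui h(z_k)/z_k$ real and tending to $+\infty$, so the top frequency wins again. For polynomial $\gamma$ this is exactly your alternative ray argument. For transcendental $\gamma$, the paper obtains $z_k$ from Picard's theorem. It also needs $1/z_k=o\bigl(h(z_k)/z_k\bigr)$, i.e.\ $|h(z_k)|\to\infty$ at the same points. That quantitative control is not a consequence of Picard's theorem alone, and the paper leaves it unjustified. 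Your appeal to Borel's theorem (constant coefficients, pairwise nonconstant $g_n-g_m$) handles polynomial and transcendental $\gamma$ at once. It uses the non-affinity hypothesis in exactly the form needed. The price is importing a Nevanlinna-theoretic result rather than giving a self-contained argument.

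Two minor remarks. First, $G$ is not meromorphic at the poles of $\gamma$: it has essential singularities there, although you only use holomorphy away from them. Second, and more importantly, your fallback ``pick a connected neighbourhood containing both'' is unavailable, since $\gamma$ is only given on $\Omega$. In fact the statement is false if $a$ lies in a different component of $\Omega$ from $[0,T]$. Take $\gamma(t)=t$ on the component of $[0,T]$ and $\gamma(t)=1/(t-a)$ on a disjoint disc around $a$, with $\lambda_1=0$, $\lambda_0=1$, $c_1=1$, $c_0=-1$, $c_{-1}=0$. Then $F(t,x)=e^{2\ui\pi t}-e^{2\ui\pi x}$ vanishes on the diagonal. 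So $\Omega$ must be taken connected, as the paper tacitly does. Then $\Omega$ minus the poles is connected, and your argument goes through.
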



\noindent In the case of affine curves $\gamma(t) = \alpha  + \beta t$  reduces to the question of linear independence of the functions
\[ 
  t \mapsto e^{2 \ui \pi (|n|^s+\lambda_n) t }
\]
which in turn reduces to the fact that the frequencies   $(|n|^s+\lambda_n)_{n=-N, \ldots, N}$ are all distinct.

\noindent Before passing on to the proof, we draw a consequence of the
norm equivalence argument \eqref{eq:equivnorm} above.

\begin{corollary}
Taking the notation of Theorem \ref{thm:merhol} and taking further $0 \not=\ffi\in L^1([0,T])$
  a non-negative function. 
Then, for every $N\geq 0$ there exists a constant
  $C=C(N,\gamma,\ffi,\Omega)\geq 1$ such that, for every
  $(c_n)_{|n|\leq N}$, we have
\[
    \frac{1}{C}\int_\Omega \abs{ F(t, x) }^2\,\mathrm{d}t\,\mathrm{d}x
\leq  \; \int_0^T \abs{ F(t, \gamma(t)) }^2\,\ffi(t)\,\mathrm{d}t
\leq  C\int_\Omega|F(t,x)|^2 \,\mathrm{d}t\,\mathrm{d}x
\]
and
\[
\frac{1}{C}\sum_{n=-N}^N|c_n|^2\leq \int_0^T\abs{
\sum_{n=-N}^N  c_n \,  e^{2\ui\pi \bigl(|n|^s t + \lambda_n \gamma(t)\bigr)}}^2\,\ffi(t)\,\mathrm{d}t
  \leq C\sum_{n=-N}^N|c_n|^2.
\]
\end{corollary}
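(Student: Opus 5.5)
The argument reduces everything to the finite-dimensional norm-equivalence principle already used for Corollary~\ref{cor:simple33} and \eqref{eq:equivnorm}. We work on the finite-dimensional space $V_N$ of functions $F$ of the form \eqref{eq:low-frequency-F}, of dimension $2N+1$ (the exponentials are linearly independent because the pairs $(|n|^s,\lambda_n)$ are distinct, the $\lambda_n$ being distinct). On $V_N$ we consider three quantities: the coefficient norm $\bigl(\sum|c_n|^2\bigr)^{1/2}$, the norm $\|F\|_{L^2(\Omega)}$, and $Q_\ffi(F)=\bigl(\int_0^T|F(t,\gamma(t))|^2\ffi(t)\,\mathrm{d}t\bigr)^{1/2}$. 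Here $\Omega$ is read as a bounded open subset of $\R^2$, or the real trace of the neighborhood appearing in Theorem~\ref{thm:merhol}; any bounded set of positive measure works. Since all norms on a finite-dimensional space are equivalent, it suffices to show that each quantity is a genuine norm on $V_N$, i.e. finite and definite. Both displayed chains of inequalities then follow, with a constant depending on $N,\gamma,\ffi,\Omega$.

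The first two quantities are routine. $\|F\|_{L^2(\Omega)}$ is finite because $F$ is bounded on bounded sets. It is definite because $F$ is real-analytic, so vanishing a.e. on an open set forces $F\equiv0$, and linear independence then gives $c_n=0$.

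For $Q_\ffi$, finiteness requires $t\mapsto F(t,\gamma(t))$ to be bounded on $[0,T]$. In case (ii), $\gamma$ is entire, so this is immediate. In case (i), $\gamma$ is meromorphic with poles off $\R$, and the poles do not accumulate on the compact real interval $[0,T]$, so $\gamma$ is continuous there. Since $\gamma$ is real-valued on the real line, $|e^{2\ui\pi\lambda_n\gamma(t)}|=1$. Hence $|F(t,\gamma(t))|\le\sum|c_n|$, and $Q_\ffi(F)^2\le\|\ffi\|_{L^1}\bigl(\sum|c_n|\bigr)^2$, using $\ffi\in L^1$.

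Definiteness of $Q_\ffi$ is the key step. Suppose $Q_\ffi(F)=0$. Then $F(t,\gamma(t))=0$ for a.e. $t$ in $E=\{\ffi>0\}$, which has positive measure because $\ffi\ge0$ and $\ffi\ne0$. A set of positive Lebesgue measure is uncountable, so it has an accumulation point in $[0,T]$; a set with no accumulation point in $(0,T)$ is at most countable, which gives an accumulation point in $(0,T)$. Theorem~\ref{thm:merhol} then yields $F\equiv0$.

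The main obstacle is this zero-set step. Only Theorem~\ref{thm:merhol} rules out a nonzero $F$ vanishing on a non-discrete subset of the curve, and the proof has to check two things to apply it: that the relevant zero set lies in the open interval $(0,T)$, and that $\gamma$ is real, so that the modulus bound above holds.
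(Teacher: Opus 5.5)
Your proposal is correct and follows essentially the same route as the paper: both arguments show that the two integral expressions are norms on the $(2N+1)$-dimensional coefficient space and invoke equivalence of norms. In both, definiteness comes from linear independence of the exponentials and from Theorem~\ref{thm:merhol}, via the fact that a positive-measure set has an accumulation point. Your extra checks fill in points the paper leaves implicit: finiteness of $Q_\ffi$, and the need for $\Omega$ to be bounded so that $\int_\Omega|F|^2<\infty$. Note that realness of $\gamma$ is not actually needed for finiteness, since continuity of $\gamma$ on the compact interval $[0,T]$ already bounds $|F(t,\gamma(t))|$.
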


\begin{proof}
It is enough to notice that 
\[
(c_k)_{k\in\{-N,\ldots,N\}}\to
\int_\Omega\abs{\sum_{k=-N}^N  c_k \,  e^{2\ui\pi (|k|^\alpha t + \lambda_k x)}}^2\d x\,\mathrm{d}t
\]
and
\[
(c_k)_{k\in\{-N,\ldots,N\}}\to\int_0^T\abs{\sum_{k=-N}^N  c_k \,  e^{2 i\pi  (|k|^\alpha t + \lambda_k \gamma(t))}}^2\, \ffi(t)\,\mathrm{d}(t)
\]
are both norms on $\C^{2N+1}$ and are thus equivalent (the constant of
norm equivalence however depends on dimension, so on
$N$). Sub-additivity and positive homogeneity are both obvious.  The
only thing that needs to be noticed is that both only vanish when
$c=0$. For the first one, this is a direct consequence of linear
independence of
$\{(t, x)\mapsto e^{2\ui\pi (|k|^\alpha t + \lambda_k x)}\}$.  For the
second one, this is a direct consequence of Theorem~\ref{thm:merhol},
and the fact that any set of positive measure has an accumulation
point.
\end{proof}

We can choose
$\ffi(t)\,\mathrm{d}t=\sqrt{1+|\gamma'(t)|^2}\,\mathrm{d}t$, the arc length
on the curve $\{\bigl(t,\gamma(t)\bigr),\ t\in[0,T]\}$ or $\ffi=1$.  The
particular case $\lambda_n=n$ is then Theorem~\ref{thm:lowfreq-intro}.

\begin{proof}[Proof of Theorem~\ref{thm:merhol}] 
Let us first assume that there exists an open neighborhood $\Omega$ of the real interval $[0,T]$
and $a\in\Omega\setminus\R$ such that $\gamma$ is meromorphic over $\Omega$
with a pole at $a$. As $F$ is holomorphic in $t,x$, by analytic continuation
\[
  \{ t \in (0, T):    F\bigl(t,\gamma(t)\bigr)=0 \}
\]
has an accumulation point if and only if 
\[
   F\bigl(z,\gamma(z)\bigr)=0,\ \forall z\in\Omega
\]
that is
\begin{equation}
\label{eq:newmerhol}
\sum_{n=-N}^N c_n\,e^{2\ui\pi \bigl(|n|^s z+\lambda_n \gamma(z)\bigr)}=0,\ \forall z\in\Omega.
\end{equation}
We now distinguish two cases

\proofskip

\noindent{\bf Case 1:} {\em $\gamma$ is a pole of finite order $m\geq 1$ at $a$}.

\proofskip In this case $(z-a)^m\gamma(z)=\tilde\gamma(z)$ is
holomorphic with $\tilde\gamma(a)\not=0$.  Write
$\tilde\gamma(a)=\rho e^{\ui \theta}$ and consider the sequence
$z_k=a+\dfrac{1}{k}\exp\left(\dfrac{i}{m}\left(\theta+\dfrac{\pi}{2}\right)\right)$
so that $\gamma(z_k)\sim -i\rho k^m$.  Assume towards a contradiction
that the $c_n$'s are not all $0$ and let $n_0$ be the index $n$ for
which $\lambda_n$ is the largest among those $n's$ for which
$c_n\not=0$.  Formally, let $S=\{n\,:c_n\not=0\}$ and
$n_0=\mathrm{argmax}_{n\in S}\lambda_n$. As all the $\lambda_n's$ are
distinct, this is well defined. Take $z=z_k$ in \eqref{eq:newmerhol}
and factor out $e^{2\ui\pi\lambda_{n_0}\gamma(z_k)}$ to obtain
\[
0= \; c_{n_0}e^{2\ui\pi|n_0|z_k}+\sum_{n\in S\setminus\{n_0\}} c_n\,e^{2\ui\pi |n|^s z_k} \,
e^{2\ui\pi \bigl(\lambda_n-\lambda_{n_0}\bigr) \gamma(z_k)}
\underset{k\to\infty}{\relbar\joinrel\relbar\joinrel\relbar\joinrel\to}  \; c_{n_0}e^{2\ui\pi|n_0|a}
\]
since $z_k\to a$, and
$\Re\left( \ui\bigl(\lambda_n-\lambda_{n_0}\bigr) \gamma(z_k) \right)
\sim \rho \bigl(\lambda_n-\lambda_{n_0}\bigr)k^m\to-\infty$ as
$\lambda_{n_0}>\lambda_n$. It follows that $c_{n_0}=0$, a
contradiction.

\proofskip

\noindent{\bf Case 2:} {\em $\gamma$ has an essential singularity at $a$}. 

\proofskip In this case, we construct the sequence $z_k$ with the help
of the Great Picard Theorem.  Indeed, it is enough to obtain a
sequence $z_k\to a$ such that $i\gamma(z_k)\in\R^-$ and
$i\gamma(z_k)\to-\infty$.  As all complex numbers, except possibly
one, are taken by $\gamma$ (infinitely often) in any neighborhood of
$a$, such a sequence $z_k$ exists. One can then argue in exactly the
same way as previously.

\proofskip Let us now assume that $\gamma$ is holomorphic over the
entire plane but not a polynomial of degree $1$. Note that
\[
0=\sum_{n=-N}^N c_n\,e^{2\ui\pi \bigl(|n|^s z+\lambda_n \gamma(z)\bigr)}=
\sum_{n=-N}^N \tilde c_n\,e^{2\ui\pi \bigl(|n|^s z+\lambda_n \bigl(\gamma(z)-\gamma(0)\bigr)\bigr)}
\]
with $\tilde c_n=c_ne^{2 \ui \pi \lambda_n\gamma(0)}$. As $c_n=0$ if and only if $\tilde c_n=0$,
without loss of generality,  we may also assume that $\gamma(0)=0$.
We can thus write
\[
\gamma(z)=a_1z+\sum_{n=2}^{+\infty}a_nz^n
\]
and the last sum is not zero since $\gamma$ is not a degree $1$ polynomial.

We proceed again by contradiction and assume that at least one of the $c_n$'s
is not $0$ and define $n_0$ as in the first part of the proof.
Let us start again with
\[
 F\bigl(z,\gamma(z)\bigr)=0,\ \forall z\in\C,
\quad\Leftrightarrow\quad F\bigl(z^{-1},\gamma(z^{-1})\bigr)=0,\ \forall z\in\C\setminus\{0\}.
\]
Now write $\gamma\left(\dfrac{1}{z}\right)=\dfrac{1}{z}\left(a_1+h(z)\right)$
with $h(z)=\sum_{n=2}a_nz^{-n}$ a meromorphic function with a pole at $0$. If $\gamma$ is a polynomial, this is a simple pole,
otherwise it is an essential pole. 
In both cases, there exists a sequence $z_k$
such that $z_k\to 0$ and $i\dfrac{h(z_k)}{z_k}\in\R$ and $\to+\infty$. 
Note also that $\dfrac{1}{z_k}=o\left(\dfrac{h(z_k)}{z_k}\right)$.
Now factoring out an exponential, $F\bigl(z^{-1},\gamma(z^{-1})\bigr)$ writes
\begin{equation}
\label{eq:newmerhol2}
0=c_{n_0}+\sum_{n\not=n_0} c_n\,\exp 2\ui\pi\left(\frac{|n|^s-|n_0|^s+a_1(\lambda_n-\lambda_{n_0})}{z}
+(\lambda_n-\lambda_{n_0})\frac{h(z)}{z}\right).
\end{equation}
Taking $z=z_k$ and letting $k\to+\infty$ we obtain $c_{n_0}=0$ since $\lambda_n-\lambda_{n_0}<0$.
\end{proof}

\section{Schrödinger equations with bounded potentials}\label{Sec:schroedinger-potential}

In this section, we extend Theorem~\ref{thm:schroedinger} to fractional Schrödinger equations with bounded potentials $V\in L^\infty(\T)$
\begin{equation}\label{eq:schroedinger-potential}
\begin{cases}
    i\partial_tu=(-|\partial_x|^s+V)u, \quad x\in \T,t\in \R,\\
    u(0,x)=u_0   
\end{cases}
\end{equation}
with initial data $u_0\in L^2(\T)$. 

\begin{theorem}\label{th:schpot}
  Let $s$ and $\alpha$ be the same as in
  Theorem~\ref{thm:schroedinger},
  $p\in \cc^1([0,\infty[)\cap \cc^2(]0,\infty[)$ satisfy
  Assumption~\ref{ass:H-alpha}. Let $V\in L^\infty(\T)$. Then, for
  every $T>0$ there exists $C(T,\|V\|_\infty)$ such that, for every
  solution $u$ of \eqref{eq:schroedinger-potential}, we have
\begin{equation*}
    \int_0^T \abs{u\bigl(t,p(t)\bigr)}^2\,\mathrm{d} t\le C(T,\|V\|_\infty) \;  \|u\|^2_{L^2(\T)}.
\end{equation*}
Further, there exists $T(p)>0$, and $\varepsilon_0(T)>0$ such that for every $T> T(p)$, every potential satisfying $\|V\|_\infty\le \varepsilon_0(T)$, there exists a positive constant $C(p,\|V\|_\infty)>0$ such that for every solution $u$ of \eqref{eq:schroedinger-potential}, we have
\begin{equation*}
C(p,\|V\|_\infty) \|u\|^2_{L^2(\T)}\leq 
\frac{1}{T}\int_0^T\abs{u\bigl(t,p(t)\bigr)}^2\,\mathrm{d} t.
\end{equation*}
\end{theorem}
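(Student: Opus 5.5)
We treat the potential perturbatively, using Duhamel's formula and the free result Theorem~\ref{thm:schroedinger}. Write $S(t)=e^{\ui t|\partial_x|^s}$ for the free group, normalized as in \eqref{eq:fraction-schroedinger}, so that $S(t)v(x)=\sum_n \hat v(n)e^{2\ui\pi(nx+|n|^st)}$. A solution of \eqref{eq:schroedinger-potential} then satisfies
\[
u(t)=S(t)u_0-\ui\int_0^t S(t-\tau)\bigl(Vu(\tau)\bigr)\d\tau .
\]
Since $V$ is real and bounded, the evolution is unitary: $\|u(\tau)\|_{L^2}=\|u_0\|_{L^2}$. Without reality we would still have a Gronwall bound $\|u(\tau)\|\le e^{\|V\|_\infty\tau}\|u_0\|$, and we will use whichever is available. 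We first prove the estimates for $u_0$ a trigonometric polynomial and then conclude by density, so traces make sense.

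\emph{Upper bound.} Evaluate the Duhamel formula at $x=p(t)$ and apply Minkowski's inequality in $L^2(0,T)$. Write the integral as $\int_0^T \un_{\tau<t}\,[S(t-\tau)f_\tau](p(t))\d\tau$ with $f_\tau=Vu(\tau)$. The key point is that for fixed $\tau$, the function $t\mapsto [S(t-\tau)f_\tau](p(t))$ equals
\[
\sum_n \hat f_\tau(n)e^{-2\ui\pi|n|^s\tau}e^{2\ui\pi(np(t)+|n|^st)}.
\]
This is a sum of exactly the type covered by \eqref{eq:obs-curve-schroedinger-upper}, with modified coefficients of the same $\ell^2$ norm. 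Restricting to $t\in(\tau,T)$ only decreases the integral. Hence
\[
\Bigl\|\int_0^t S(t-\tau)(Vu(\tau))(p(t))\d\tau\Bigr\|_{L^2(0,T)}\le \int_0^T C(T)^{1/2}\|V\|_\infty\|u(\tau)\|_{L^2}\d\tau\le C(T)^{1/2}T\|V\|_\infty e^{\|V\|_\infty T}\|u_0\|.
\]
Together with the free term, this gives the upper bound with $C(T,\|V\|_\infty)=2C(T)\bigl(1+T^2\|V\|_\infty^2e^{2\|V\|_\infty T}\bigr)$.

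\emph{Lower bound.} Take $T>T(p)$ from Theorem~\ref{thm:schroedinger}. Then
\[
\Bigl(\frac1T\int_0^T|u(t,p(t))|^2\d t\Bigr)^{1/2}\ge \tilde C(p)^{1/2}\|u_0\|-\Bigl(\frac{C(T)}{T}\Bigr)^{1/2}T\|V\|_\infty e^{\|V\|_\infty T}\|u_0\|.
\]
We choose $\varepsilon_0(T)$ so that $\sqrt{C(T)T}\,\varepsilon_0e^{\varepsilon_0T}\le\frac12\tilde C(p)^{1/2}$. This yields the claim with constant $\tilde C(p)/4$ and $\|u_0\|=\|u(t)\|$.

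\emph{Main obstacle.} The delicate step is justifying that the Duhamel term has a trace on the curve and that Minkowski can be interchanged with the trace. For this we work with smooth data, or truncated Fourier modes, where everything is a continuous function. The estimates are uniform, so they pass to the limit, which defines the trace as the $L^2(0,T)$ limit. A second point to check is that the constant $C(T)$ in \eqref{eq:obs-curve-schroedinger-upper} is uniform when the same curve is used but time starts at different $\tau$. This is handled by the truncation $\un_{t>\tau}$ argument above rather than by shifting the curve, which avoids the issue entirely.
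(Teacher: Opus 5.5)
Your proposal is correct and follows essentially the same route as the paper. Both arguments use Duhamel's formula, then apply the free upper bound \eqref{eq:obs-curve-schroedinger-upper} to each Duhamel slice at fixed $\tau$, then absorb the perturbation in the lower bound for small $\|V\|_\infty$; your phase-rotated coefficients $\hat f_\tau(n)e^{-2\ui\pi|n|^s\tau}$ on $(\tau,T)$ are exactly what the paper's shifted curve $q_\tau(t)=p(t+\tau)$, viewed as a translate of a piece of $\Gamma_p^{0,T}$, encodes. The differences are cosmetic: you use Minkowski's integral inequality where the paper uses Cauchy--Schwarz in $\tau$ plus Fubini, and your bookkeeping (the factor $T^2\|V\|_\infty^2$, and the Gronwall bound for non-real $V$) is more careful than the paper's displayed constants.
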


\begin{proof}
Given any $u_0\in L^2(\T)$, we write the solution $u$ by Duhamel's formula
\begin{equation*}
    u(t,x)=e^{\ui t(|\partial_x|^s-V)}u_0=e^{\ui t|\partial_x|^s}u_0-i\int_0^t e^{\ui (t-\tau)|\partial_x|^s} V(x)u(\tau,x)\d \tau.
\end{equation*}
One can write it as
\begin{align}
    u(t,x)=&\,e^{\ui t|\partial_x|^s}u_0-i\int_0^t \mathbf{1}_{\{\tau<t\}} e^{\ui (t-\tau)|\partial_x|^s} V(x)e^{\ui \tau(|\partial_x|^s-V)}u_0\d \tau\notag\\
    =&\, e^{\ui t|\partial_x|^s}u_0-i\int_0^t \mathbf{1}_{\{\tau<t\}} e^{\ui (t-\tau)|\partial_x|^s} w_\tau\d \tau\label{eq:duhamel}
    \end{align}
where we denote by $w_\tau:=w_\tau (x):=V(x)e^{\ui \tau(|\partial_x|^s-V)}u_0=V(x)u(\tau,x)$ in the second line. Notice that $w_\tau(x)\in L_\tau^\infty L_x^2:=L^\infty_\tau((0,\infty);L^2_x(\T))$. For simplicity, we define for any $T_2\ge T_1\ge 0$ the spacetime curve
\begin{equation*}
    \Gamma_p^{T_1,T_2}:=\lbrace (t,p(t-\tau))\in \R\times \T: T_1\le t\le T_2 \rbrace.
\end{equation*}
When $T_1=0$, we simply write $\Gamma_p^T=\Gamma_p^{0,T}$ and then introduce
\begin{equation*}
    \|v\|_{\Gamma_p^{T}}:=\biggl( \int_{T_1}^{T_2} |v(t,p(t)|^2\d t \biggr)^{\frac{1}{2}}
\end{equation*}
whenever the integral in the right-hand side is meaningful.

For the first claim, Duhamel's formula \eqref{eq:duhamel} further gives
\begin{equation*}
    |u(t,x)|^2\le 2 \bigl| e^{\ui t|\partial_x|^s} u_0\bigr|+2 \biggl|\int_0^t \mathbf{1}_{\tau<t} e^{\ui (t-\tau)|\partial_x|^s} w_\tau\d \tau\biggr|^2
\end{equation*}
Then we integrate it along the curve $\Gamma_p^{0,T}$ and obtain
\begin{equation}\label{eq:integral-along-curve}
    \|u\|^2_{\Gamma_p^{T}}\le  2\|e^{\ui t|\partial_x|^s}u_0 \|_{\Gamma_p^{0,T}}^2+2  \biggl\|\int_0^\infty\mathbf{1}_{\tau<t} w_\tau (t-\tau,x)\d \tau\biggr\|^2_{\Gamma_p^{0,T}}.
\end{equation}
where we used the notation $w_\tau (t,x):=e^{\ui t|\partial_x|^s}w_\tau$. For the first term in the right-hand side of \eqref{eq:integral-along-curve}, we use \eqref{eq:obs-curve-schroedinger-upper} in Theorem~\ref{thm:schroedinger} and then obtain
\begin{equation}\label{eq:upperbound-first}
    \|e^{\ui t|\partial_x|^s}u_0 \|_{\Gamma_p^{T}}^2\le C(T)\|u_0\|^2_{L^2(\T)}.
\end{equation}
For the second term in the right-hand side of \eqref{eq:integral-along-curve}, by the Cauchy--Schwarz, we have
\begin{equation*}
    \begin{aligned}
\left\|\int_0^\infty\mathbf{1}_{\{\tau<t\}} w_\tau (t-\tau,x)\d \tau\right\|^2_{\Gamma_p^{T}}
=&\, \int_0^T \left| \int_0^\infty \mathbf{1}_{\{\tau<t\}}w_\tau(t-\tau,p(t))\d \tau\right|^2\d t\\
 \le &\, \int_0^Tt\int_0^\infty \bigl|\mathbf{1}_{\{\tau<t\}}w_\tau(t-\tau,p(t))\bigr|^2\d \tau \d t\\
         \le &\, T\int_0^T \int_0^\infty \bigl|\mathbf{1}_{\{\tau<t\}}w_\tau(t-\tau,p(t))\bigr|^2\d \tau \d t\\
         =&\, T\int_0^\infty\int_0^{T-\tau} \bigl|\mathbf{1}_{\{t>0\}}w_\tau(t,p(t+\tau))\bigr|^2\d t\mathrm{d}\tau\\
         =&\, T\int_0^\infty \|e^{\ui t|\partial_x|^s}w_\tau(x)\|_{\Gamma_{q_\tau}^{T-\tau}}^2\d \tau
    \end{aligned}
\end{equation*}
where we defined $q_\tau:=q_\tau(t):=p(t+\tau)$.  Here we notice that the curve $\Gamma_{q_\tau}^{0,T-\tau}$ is just the parallel translation of $\Gamma_{p}^{\tau, T}$ with the latter as a part of $\Gamma_{p}^{0, T}$, see Fig.~\ref{fig:parallel-translation}. This translation  does not influence \eqref{eq:obs-curve-schroedinger-upper}, therefore we can still use it to obtain
\begin{equation*}
    \|e^{\ui t|\partial_x|^s}w_\tau(x)\|_{\Gamma_{q_\tau}^{T-\tau}}^2\le C(T)\|w_\tau(x)\|^2_{L^2(\T)}\le \|V\|_\infty \|u_0\|^2_{L^2(\T)}
\end{equation*}
where we used the definition $w_\tau(x)=V(x)e^{\ui \tau (|\partial_x|^s-V)}u_0$. The above two inequalities give the upper bound of the second term on the right-hand side of \eqref{eq:integral-along-curve}
\begin{equation}\label{eq:upperbound-second}
    \biggl\|\int_0^\infty\mathbf{1}_{\tau<t} w_\tau (t-\tau,x)\d \tau\biggr\|^2_{\Gamma_p^{T}}\le TC(T)\|V\|_\infty \|u_0\|^2_{L^2(\T)}.
\end{equation}
Substituting \eqref{eq:upperbound-first} and \eqref{eq:upperbound-second} into \eqref{eq:integral-along-curve} we obtain the first claim for the upper bound.

  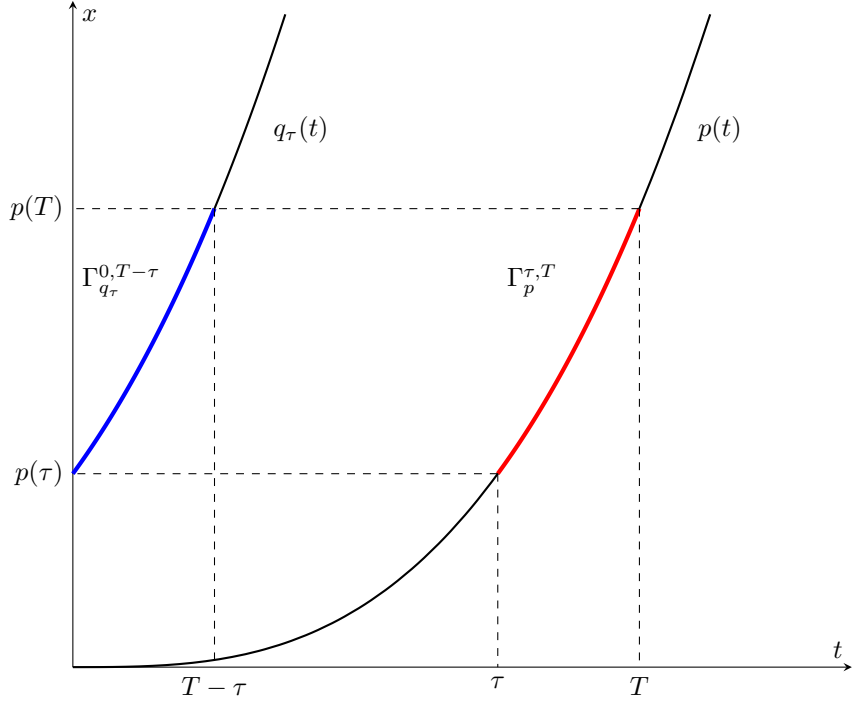
\begin{figure}[ht]
      \centering
\begin{tikzpicture}
  \begin{axis} [axis lines=center,ticks=none,xmin=0,xmax=5.5,ymin=0,ymax=31,
      width=0.8\linewidth,   
      height=0.7\linewidth, 
      clip=false, 
      xlabel={$t$},
      ylabel={$x$},
      ]
     \addplot [domain=0:4.5, smooth, thick] {x^3/3};
     \addplot [domain=3:4, smooth, ultra thick, red] {x^3/3};
     \addplot [domain=0:1.5, smooth, thick] {(x+3)^3/3};
     \addplot [domain=0:1, smooth, ultra thick, blue] {(x+3)^3/3};
     \draw (axis cs:0,18) node[right] {$\Gamma_{q_\tau}^{0,T-\tau}$};
     \draw (axis cs:3,18) node[right] {$\Gamma_{p}^{\tau, T}$};
     \draw (axis cs:4.35,25) node[right] {$p(t)$};
     \draw (axis cs:1.35,25) node[right] {$q_\tau(t)$};
     \draw[dashed] (axis cs:1,64/3) -- (axis cs:1,0) node[below] {$T-\tau$}; 
     \draw[dashed] (axis cs:3,27/3) -- (axis cs:3,0) node[below] {$\tau$};
     \draw[dashed] (axis cs:4,64/3) -- (axis cs:4,0) node[below] {$T$};
     \draw[dashed] (axis cs:4,64/3) -- (axis cs:0,64/3) node[left] {$p(T)$} ;
     \draw[dashed] (axis cs:3,27/3) -- (axis cs:0,27/3) node[left] {$p(\tau)$};
  \end{axis}
  \end{tikzpicture}
      \caption{The curves $\Gamma_{q_\tau}^{0,T-\tau}$ and $\Gamma_p^{\tau,T}$.}
      \label{fig:parallel-translation}
  \end{figure}

\smallskip
For the second claim, we apply Duhamel's formula \eqref{eq:duhamel} and integrate along the curve $\Gamma_p^{T}$ to obtain
\begin{equation}\label{eq:integral-along-curve-2}
    \|u\|^2_{\Gamma_p^{T}}\ge  \frac{1}{2}\|e^{\ui t|\partial_x|^s}u_0 \|_{\Gamma_p^{T}}^2
    - \biggl\|\int_0^\infty\mathbf{1}_{\tau<t} w_\tau (t-\tau,x)\d \tau\biggr\|^2_{\Gamma_p^{T}}.
\end{equation}
For the first term in the right-hand side of \eqref{eq:integral-along-curve-2}, we apply \eqref{eq:obs-curve-schroedinger} and obtain for any $T>T(p)$
\begin{equation}\label{eq:lowerbound-first}
    \|e^{\ui t|\partial_x|^s}u_0 \|_{\Gamma_p^{T}}^2\ge T\title{C}(p)\|u_0\|^2_{L^2(\T)}
\end{equation}
For the second term in the right-hand side of \eqref{eq:integral-along-curve-2}, we still have the estimate given
by \eqref{eq:upperbound-second}. Combining \eqref{eq:lowerbound-first} with \eqref{eq:upperbound-second} by 
choosing $\|V\|_\infty$ sufficiently small, we complete the proof of the second claim.
\end{proof}

\section*{Acknowledgements}

The authors wish to thank Nicolas Burq for a valuable conversation
that lead us to section~\ref{Sec:schroedinger-potential}.

\medskip

B. Haak, Ph. Jaming and Y. Wang were supported by the French National
Research Agency (ANR) under contract numer ANR-24-CE40-5470.

\medskip

M. Wang was supported by the National Natural Science Foundation of
China under grant 12571260.

\appendix

\section{Quick overview of observability properties for Schrödinger
  equations}\label{sec:bib-table}

Observability of Schrödinger equations on compact manifolds is well understood from open sets.
Some key contributions are listed in the following table.

\begin{table}[H] 
\centering
\renewcommand{\arraystretch}{1.4} 
\setlength{\tabcolsep}{3pt} 
\begin{tabular}{p{0.42\textwidth} p{0.12\textwidth}  p{0.18\textwidth} p{0.17\textwidth}}
\toprule
Authors & dimension& potential $V$ & $\omega$  \\
\midrule
Haraux \cite{Haraux}, Jaffard \cite{Jaf1988,Jaffard-1990} & $d=2$ & $V=0$ & $\omega$ open \\
\addlinespace
Komornik \cite{Kom}, Maci\`a \cite{Macia} & $d\geq 1$ & $V=0$ & $\omega$ open \\
\addlinespace
Burq--Zworski \cite{BurqZworski} & $d=1,2$ & $V$ smooth & $\omega$ open \\
\addlinespace
Anantharaman--Maci\`a \cite{AnanthMacia} & $d\geq 1$ & $V$ bounded and continuous a.e. & $\omega$ open \\
Bourgain--Burq--Zworski \cite{BourgainBurqZworski} & $d=1,2$ & $V\in L^2$ & $\omega$ open \\
\addlinespace
Bourgain \cite{Bourgain} & $d=3$ & $V$ bounded & $\omega$ open \\
\addlinespace
Burq-Zworski \cite{BurqZworski19} & $d=1,2$ & $V=0$ & $\omega=[0,T]\times\tilde\omega$, $|\tilde\omega|>0$ \\
\addlinespace
Burq-Zhu \cite{BurqZhu1,BurqZhu2} & $d=1,2$ & $V$ rough & $|\omega|>0$ \\
\bottomrule
\end{tabular}
\end{table}

While Haraux, Jaffard and Komornik used Kahane’s theory of
lacunary series \cite{Kahane}, or, more precisely, Ingham's Inequality
(see above), other authors use microlocal techniques.

\section{Proof of Remark \ref{rem:few_points}}\label{sec:few-points}

\begin{lemma}
    Let $\eps_1,\eps_2$ be the standard basis of $\R^2$.
Let $c_{-1},c_0,c_1\in\C$ and define
$F:\R^2 \to \C$ by
\[
F(t,x)=c_1e^{\ui(t-x)}+c_0+c_1e^{\ui(t+x)}.
\]
Let $(A_j)_{1\leq j\leq n}$, $n\geq 3$, be points in $\R^2$ such that
\begin{enumerate}
    \item $(\scal{A_j,\eps_2})_{1\leq j\leq n}$ do not belong to an arithmetic progression of step $\pi$;
    \item $(\scal{A_j,\eps_1+\eps_2})_{1\leq j\leq n}$ do not belong to an arithmetic progression of step $2\pi$;
     \item $(\scal{A_j,\eps_1-\eps_2})_{1\leq j\leq n}$ do not belong to an arithmetic progression of step $2\pi$.
\end{enumerate}
Assume that $F$ vanishes at the points $A_j$, $F(A_j)=0$ for $j\geq 1$,
then $c_0=c_1=c_{-1}=0$.
\end{lemma}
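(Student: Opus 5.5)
We read the displayed function as $F(t,x)=c_{-1}e^{\ui(t-x)}+c_0+c_1e^{\ui(t+x)}$; the first coefficient $c_1$ in the statement is presumably a typo for $c_{-1}$. Write $A_j=(t_j,x_j)$. The plan is to factor
\[
F(t,x)=c_0+e^{\ui t}\,g(x),\qquad g(x):=c_{-1}e^{-\ui x}+c_1e^{\ui x},
\]
and to run a case analysis on which coefficients vanish. In each degenerate case, the equations $F(A_j)=0$ will force one of the three linear forms $\scal{A_j,\eps_2}$, $\scal{A_j,\eps_1+\eps_2}$, $\scal{A_j,\eps_1-\eps_2}$ to lie in a single residue class. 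That is exactly what hypotheses (1)--(3) forbid.

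\textbf{The degenerate cases.} I would take them in this order.
\begin{itemize}
\item If $c_0=0$, then $g(x_j)=0$ for all $j$. If moreover $c_1\neq0$, this gives $e^{2\ui x_j}=-c_{-1}/c_1$ for every $j$. Hence all $x_j=\scal{A_j,\eps_2}$ are congruent modulo $\pi$, contradicting (1). So $c_1=0$, and then $c_{-1}e^{-\ui x_j}=0$ gives $c_{-1}=0$.
\item If $c_0\neq0$ and $c_1=0$, then $c_{-1}\neq0$ (otherwise $F\equiv c_0\neq0$). We get $e^{\ui(t_j-x_j)}=-c_0/c_{-1}$ for all $j$, so the $t_j-x_j=\scal{A_j,\eps_1-\eps_2}$ lie in one class modulo $2\pi$, contradicting (3).
\item Symmetrically, if $c_0\neq0$ and $c_{-1}=0$, then $e^{\ui(t_j+x_j)}=-c_0/c_1$, contradicting (2).
\end{itemize}

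\textbf{The case $c_{-1}c_0c_1\neq0$.} This is where I expect the real obstacle. Here the zero set of $F$ is
\[
\{(t,x):\ |g(x)|=|c_0|,\ e^{\ui t}=-c_0/g(x)\},
\]
a union of smooth curves (generically nonempty). It then seems possible to pick $n\geq3$ points on it satisfying (1)--(3), which would make the lemma false as stated. So the first thing I would do is test this numerically or by an explicit construction, for instance with $c_{-1}=c_1=1$, $c_0=-1$: here $|g(x)|=2|\cos x|=1$ and $t\equiv-\arg(2\cos x)$, and we can choose points on this curve with generic $x_j$.

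If a counterexample appears, the statement has to be repaired. The natural fix would be to add a hypothesis that rules out the nondegenerate zero set. One option is to choose $A_j$ so that the values $|g(x_j)|$ cannot all equal $|c_0|$; for example, require that three of the $x_j$ are such that the functions $x\mapsto|c_{-1}e^{-\ui x}+c_1e^{\ui x}|^2=|c_{-1}|^2+|c_1|^2+2\Re(c_{-1}\overline{c_1}e^{-2\ui x})$, evaluated at those points, form an invertible linear system in the unknowns $|c_{-1}|^2+|c_1|^2-|c_0|^2$ and $c_{-1}\overline{c_1}$. Then $|g(x_j)|^2=|c_0|^2$ for all $j$ would force $c_{-1}\overline{c_1}=0$, returning us to the degenerate cases already treated.

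If instead the intended reading is that (1)--(3) are imposed on every admissible triple, I would first reduce to $n=3$. I would then eliminate $t$ between pairs of points via $e^{\ui(t_j-t_k)}=g(x_k)/g(x_j)$, and aim to derive the same linear-system contradiction. The degenerate cases above go through regardless of which repair is adopted.
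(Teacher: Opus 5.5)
Your degenerate cases are correct and match the paper's own case split ($c_0=0$ via (1); $c_0\neq0$ with $c_1=0$ or $c_{-1}=0$ via (3) resp.\ (2)), and $c_{-1}$ is indeed the intended first coefficient, as the paper's proof shows. The gap is the case $c_{-1}c_0c_1\neq0$. There you give no argument, and the picture behind your doubt is wrong. $F(A_j)=0$ forces $|g(x_j)|=|c_0|$, i.e.
\[
\bigl(|c_{-1}|^2+|c_1|^2-|c_0|^2\bigr)+2\Re\bigl(c_{-1}\overline{c_1}\,e^{-2\ui x_j}\bigr)=0 .
\]
Since $c_{-1}c_1\neq0$, this is an equation $\cos(2x_j-\psi)=\kappa$ in $x_j$ alone. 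So the admissible $x_j$ form at most two classes modulo $\pi$, and $e^{\ui t_j}=-c_0/g(x_j)$ then fixes $t_j$ modulo $2\pi$. The zero set is therefore discrete, not a union of curves, and it has no ``generic'' $x_j$: in your example $2|\cos x|=1$ forces $x\in\pm\frac{\pi}{3}+\pi\Z$.

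Your suspicion is nevertheless right, because (1) as written only forbids all $x_j$ lying in one class modulo $\pi$, and two classes suffice to meet it. Take $c_{-1}=c_1=1$, $c_0=-1$, i.e.\ $F(t,x)=2\cos x\,e^{\ui t}-1$, and $A_1=(0,\frac{\pi}{3})$, $A_2=(0,-\frac{\pi}{3})$, $A_3=(\pi,\frac{2\pi}{3})$. All three points are zeros, and the $x_j$ are not all congruent modulo $\pi$. Likewise $t_j+x_j=\frac{\pi}{3},-\frac{\pi}{3},\frac{5\pi}{3}$ and $t_j-x_j=-\frac{\pi}{3},\frac{\pi}{3},\frac{\pi}{3}$ are not all congruent modulo $2\pi$. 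So the lemma is false as stated, also with the literal $c_1$ written twice.

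The paper's proof breaks exactly where you expected. From $2x_j\equiv\ffi\pm\arccos\kappa$ modulo $2\pi$ and $n\geq3$, it concludes that all $x_j$ lie in one progression of step $\pi$. Pigeonhole only makes two of them congruent. Rather than deferring to a numerical test, you should have settled the question with such an explicit example.

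Your first repair is the right one. Set $w=c_{-1}\overline{c_1}$ and $a=|c_{-1}|^2+|c_1|^2-|c_0|^2$. The real system in $a$, $\Re w$, $\Im w$ has rows $(1,2\cos 2x_j,2\sin 2x_j)$, $j=1,2,3$. It is invertible exactly when the points $e^{2\ui x_j}$ of the unit circle are distinct, i.e.\ when $x_1,x_2,x_3$ are pairwise incongruent modulo $\pi$. Under this strengthened (1) you get $c_{-1}\overline{c_1}=0$ and fall back on the degenerate cases. This is a linear-algebra version of the paper's pigeonhole step, which is valid under precisely this hypothesis. Your other route — keeping (1)--(3) as written, reducing to $n=3$ and eliminating $t$ — cannot work, since the counterexample above already has $n=3$.
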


Note that any continuous curve that is neither an horizontal line nor one of the
diagonals will contain points that satisfy this property.

\begin{proof}
Write $A_j=(t_j,x_j)$, then we have
\begin{equation}
    \label{eq:5pointsind}
c_{-1}e^{\ui(t_j-x_j)}+c_0+c_1e^{\ui(t_j+x_j)}=0,\quad j\geq 1
\end{equation}
Assume first that $c_0=0$.  Then \eqref{eq:5pointsind} reads
$c_1e^{\ui(t_j+x_j)}+c_{-1}e^{\ui(t_j-x_j)}=0$ and this implies that
$c_{-1}=-c_1e^{2\ui x_j}$.  But then either $c_{-1}=c_1=0$ or
$e^{2\ui x_j}=e^{2\ui x_1}$ for $j\geq 2$.  But the latter requires
that $x_j=x_1+k_j\pi$, that is $\scal{A_j,\eps_2}$ belongs to an
arithmetic progression of step $\pi$.
    
\proofskip Assume now towards a contradiction that $c_0\not=0$. Then,
up to replacing $c_{\pm 1}$ by $c_{\pm1}/c_0$, we may assume that
$c_0=1$ so that \eqref{eq:5pointsind} reads
\begin{equation}  \label{eq:philippe1}
c_{-1}e^{-\ui x_j}+c_1e^{\ui x_j}=-e^{-\ui t_j}.
\end{equation}
Taking the squared modulus, this reduces to
\[
|c_{-1}|^2+|c_1|^2+2\Re(c_1\overline{c_{-1}}e^{2\ui x_j})=1.
\]
If one of $c_{\pm1}=0$, say $c_{-1}=0$ then $|c_1|=1$. We thus write
$c_1=e^{\ui\ffi}$ and \eqref{eq:philippe1} becomes
\[
e^{\ui(t_j+x_j+\ffi+\pi)}=1.
\]
But then $\scal{A_j,\eps_1+\eps_2}$ belongs to an arithmetic
progression of step $2\pi$.  If $c_1=0$, then $c_{-1}=e^{\ui\ffi}$ and
$e^{\ui(t_j-x_j+\ffi+\pi)}=1$ so that $\scal{A_j, \eps_1 - \eps_2}$
belongs to an arithmetic progression of step $2\pi$.

\proofskip It remains to exclude the case $c_1\overline{c_{-1}}\not=0$
by a contradiction argument. We now write
$c_1\overline{c_{-1}}=|c_1||c_{-1}|e^{-\ui\ffi}$ so that now
\[
|c_{-1}|^2+|c_1|^2+2|c_1||c_{-1}|\cos(2x_j-\ffi)=1
\quad\Leftrightarrow\quad
\cos(2x_j-\ffi)=\dfrac{1-|c_{-1}|^2-|c_1|^2}{2|c_1||c_{-1}|} =: \kappa.
\]
Note that necessarily $\abs{\kappa}\leq 1$ and then
\[
2x_j = \ffi \pm \arccos\kappa\quad\mod(2 \pi).
\]
As $n \ge 3$ one of the two signs appears at least twice, implying
again that $\scal{A_j,\eps_2}$ belongs to an arithmetic progression of
step $\pi$.
\end{proof}

\section{A description of the domain $ S_{\mathrm{good}}^{-}$ in Proposition~\ref{prop:vdc}}
\begin{center}
{\color{red} This appendix is not for final publication (arxiv only)}
\end{center}

We give an explicit description of the curve
\begin{equation*}
    \frac{|x|^{1.5}-|y|^{1.5}}{x-y}=-4
\end{equation*}
in Figure~\ref{fig:S-sets}. Assume that 
\begin{equation*}
    y<x<0 \text{ and } |y|>|x|.
\end{equation*}
Then we can define
\begin{equation*}
    x=-u^2 \text{ and } y=-v^2,\quad v>u.
\end{equation*}
The curve can now be written as
\begin{equation*}
    u^3-v^3=4(u^2-v^2)\quad\Longleftrightarrow\quad u^2+uv+v^2=4(u+v)
\end{equation*}
after simplification by $u-v\not=0$. This is equivalent to an elliptic equation
\begin{equation*}
    (u+\frac{1}{2}v-2)^2+\frac{3}{4}(v-\frac{4}{3})^2=\frac{16}{3}.
\end{equation*}
so that we get the parametric description
\begin{equation*}
    \begin{cases}
        u+\frac{1}{2}v-2=\sqrt{\frac{16}{3}}\cos\theta,\\
        v-\frac{4}{3}=\frac{8}{3}\sin\theta.
    \end{cases}
    \quad\Longleftrightarrow\quad
    \begin{cases}
        u=\sqrt{\frac{16}{3}} \cos\theta -\frac{4}{3}\sin\theta +\frac{4}{3},\\
        v=\frac{8}{3}\sin\theta +\frac{4}{3},
    \end{cases}
\end{equation*}
for $\theta\in I$ and some $I\subset [0,2\pi)$ that we describe now.
Since $v>u$, $u>0,v>0$, we obtain
\begin{equation*}
    \begin{cases}
        \frac{8}{3}\sin\theta +\frac{4}{3}>\sqrt{\frac{16}{3}}\cos\theta-\frac{4}{3}\sin\theta+\frac{4}{3},\\
        \frac{8}{3}\sin\theta +\frac{4}{3}>0,\\
        \sqrt{\frac{16}{3}}\cos\theta -\frac{4}{3}\sin\theta +\frac{4}{3}>0,
    \end{cases} 
    \quad\Longleftrightarrow\quad
    \begin{cases}
        \frac{\sqrt{3}}{2}\sin\theta -\frac{1}{2}\cos\theta>0,\\
        \sin\theta>-\frac{1}{2},\\
        \frac{\sqrt{3}}{2}\cos\theta-\frac{1}{2}\sin\theta>-\frac{1}{2}.
    \end{cases}
\end{equation*}
This is equivalent to
\begin{equation*}
    \begin{cases}
        \sin (\theta-\frac{\pi}{6})>0,\\
        \sin\theta>-\frac{1}{2},\\
        \cos(\theta+\frac{\pi}{6})>\cos (\frac{2\pi}{3}),
    \end{cases}\Longrightarrow \theta\in (\frac{\pi}{6},\frac{\pi}{2}),
\end{equation*}
Hence we have
\begin{equation*}
    \begin{cases}
        x=-(\sqrt{\frac{16}{3}} \cos\theta -\frac{4}{3}\sin\theta +\frac{4}{3})^2,\\
        y=-(\frac{8}{3}\sin\theta +\frac{4}{3})^2,
    \end{cases} \text{ with } \theta \in (\frac{\pi}{6},\frac{\pi}{2} ).
\end{equation*}
The other part is nothing but the symmetry with respect to $y=x$,
\begin{equation*}
    \begin{cases}
        x=-(\frac{8}{3}\sin\theta +\frac{4}{3})^2,\\
        y=-(\sqrt{\frac{16}{3}} \cos\theta -\frac{4}{3}\sin\theta +\frac{4}{3})^2.
    \end{cases},
         \text{ with } \theta \in (\frac{\pi}{6},\frac{\pi}{2} )
\end{equation*}
Now we assume that
\begin{equation*}
    y<0<x<|y|.
\end{equation*}
Then we define
\begin{equation*}
    x=u^2 \text{ and } y=-v^2,\quad v>u.
\end{equation*}
The curve can now be written as
\begin{equation}\label{eq:u-v}
    u^3-v^3=-4(u^2+v^2).
\end{equation}
Let $t=\dfrac{u}{v}\in (0,1)$, then the equation becomes
\begin{equation*}
    v(t^3-1)=-4(t^2+1)\Longrightarrow v=\frac{4(t^2+1)}{1-t^3}.
\end{equation*}
Since $u=tv=\dfrac{4t(t^2+1)}{1-t^3}$ we can represent the curve in this region as
\begin{equation*}
    \begin{cases}
        x=\left(\dfrac{4t(t^2+1)}{1-t^3}\right)^2,\\
        y=-\left(\dfrac{4(t^2+1)}{1-t^3}\right)^2,
    \end{cases} \text{ with } t\in(0,1).
\end{equation*}
Similarly, for $x<0<y<|x|$, we have
\begin{equation*}
    \begin{cases}
        x=-\left(\dfrac{4(t^2+1)}{1-t^3}\right)^2,\\
        y=\left(\dfrac{4t(t^2+1)}{1-t^3}\right)^2,
    \end{cases} \text{ with } t\in (0,1).
\end{equation*}
Now we can draw the regions by using these explicit representations, see Figure~\ref{fig:the_curve2}.
\begin{figure}[htbp]
    \centering
    \includegraphics[scale=0.5]{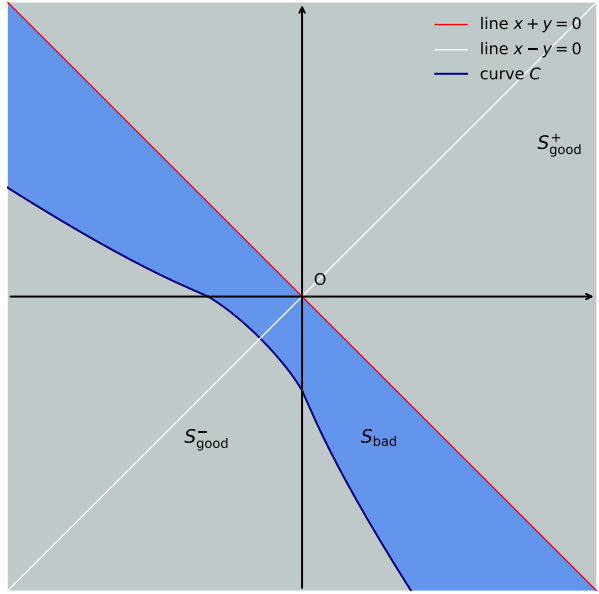}
    \caption{The curve $C: |x|^{1.5}-|y|^{1.5}=-4(x-y),x\neq y$, line $x+y=0$, $x-y=0$ and three regions for the example $s=1.5$ with threshold $\tau=4$.}
    \label{fig:the_curve2}
\end{figure}

\section{Another proof of Theorem~\ref{thm:lowfreq-intro} for polynomial curves}
\begin{center}
{\color{red} This appendix is not for final publication (arxiv only)}
\end{center}

  In this appendix  we consider $F$ of the form 
\begin{equation}  \label{eq:a_low-frequency-F}
  F(t,x) := \sum_{n=-N}^N  c_n \,  e^{2\ui\pi\, (|n|^s t + \lambda_n x)}.
\end{equation}
where 
\begin{enumerate}
\renewcommand{\theenumi}{\roman{enumi}}
\item $s>0$ is fixed and $N \ge 1$ is a natural number;

\item $(\lambda_n)_{n=-N\ldots N}$ is a finite sequence of distinct real numbers;

\item $(c_n)_{n=-N,\ldots,N}\subset\C$ is a finite sequence of complex coefficients.
\end{enumerate}
  
We assume that $P$ is a polynomial of degree $\geq 2$. We are providing another proof that
if $F\bigl(t,P(t)\bigr)=0$ for $t\in I$ (an interval) then $c_n=0$ for $n=-N,\ldots,N$.

\proofskip We first consider monomials curves, $\gamma(t)=t^d$ with $d\geq 2$.

\begin{example}\label{ex:monomials}
  Assume that $F$ of the
  form \eqref{eq:low-frequency-F} satisfies $F(t,t^d)= 0$. We claim
  that necessarily $F=0$. 
  
  \proofskip Indeed, as $z\to F(z,z^d)$ is an entire function over $\C$, if
  $F(t,t^d)= 0$ for $t\in(a,b)$ then $F(z, z^d)=0$ for every $z\in\C$.
  In particular, taking $\omega=e^{\ui\pi/d}$ so that $\omega^d = -1$ we obtain 
  for every $\tau\in\R$,
  \begin{align}    
    0 =& \; F\bigl( (\tau\omega ), (\tau\omega)^d\bigr)\notag\\
     = & \; c_0e^{- 2\ui\pi\, \lambda_0 \tau^d } +\sum_{n=1}^N  ( c_n e^{-2\ui\pi\, \lambda_n \tau^d}  + c_{-n} e^{-2\ui\pi\, \lambda_{-n} \tau^d} )   
    e^{2\ui\pi\, \tau\omega|n|^s } .\label{eq:induction}
  \end{align}
  Observe that $\Re(i \omega) <0$ so that taking the limit
  $\tau\to +\infty$, the last sum in \eqref{eq:induction} converges to $0$, 
  while $|c_0e^{- 2\ui\pi\, \lambda_0 \tau^d }|=|c_0|$, showing that $c_0 = 0$.
  We can then rewrite \eqref{eq:induction} as
  \begin{multline*}
    0=c_1e^{-2\ui\pi\, \lambda_1 \tau^d} + c_{-1} e^{-2\ui\pi\, \lambda_{-1} \tau^d}\\
    +\sum_{n=2}^N ( c_n e^{-2\ui\pi \lambda_n \tau^d} + c_{-n} e^{-2\ui\pi\, \lambda_{-n} \tau^d} ) e^{2\ui\pi \, \tau\omega(|n|^s-1) } .
  \end{multline*}
  As above, the last sum goes to $0$ when $\tau\to+\infty$. This implies
  $c_1 e^{-2\ui\pi\, \lambda_1 \tau^d} + c_{-1} e^{-2\ui\pi\, \lambda_{-1} \tau^d}\to
  0$. But
  \[
    |c_1e^{-2\ui\pi\, \lambda_1 \tau^d} + c_{-1} e^{-2\ui\pi\, \lambda_{-1} \tau^d}|=
    |c_1+c_{-1}e^{2\ui\pi\, (\lambda_1-\lambda_{-1}) \tau^d}|
  \]
  so that a vanishing limit implies $c_1=c_{-1}=0$, since $\lambda_1\not=\lambda_{-1}$. An easy induction
  then shows that, for $n=1,\ldots,N$,
  $|c_n + c_{-n} e^{2\ui\pi\, (\lambda_n - \lambda_{-n}) s^d}|\to 0$ and hence
  $c_n =c_{-n}= 0$ by the same argument. It follows that $F=0$.
\end{example}

\noindent The preceding example generalizes to polynomials.

\begin{theorem}\label{thm:pol}
Let $s\ge 0$, $N\ge 0$, and let $(\lambda_n)_{n=-N}^N\subset\R$ be pairwise
distinct frequencies.  Define
\[
F(t,x)=\sum_{n=-N}^N c_n\,e^{2\ui\pi(|n|^s t+\lambda_n x)},\qquad c_n\in\C.
\]
Let $P\in\R[z]$ be a real polynomial of degree $d\ge 2$.  If
\[
F\bigl(t,P(t)\bigr)=0\quad\text{for all }t\text{ in some nontrivial interval }I\subset\R,
\]
then $F\equiv 0$, i.e. $c_n=0$ for all $n$.
\end{theorem}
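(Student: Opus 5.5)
The plan is to generalize Example~\ref{ex:monomials} by choosing a complex ray along which the leading term of $P$ becomes purely imaginary with the right sign. Then the factors $e^{2\ui\pi\lambda_n P(z)}$ separate the frequencies by their size, and the $e^{2\ui\pi|n|^s z}$ factors are only lower order. The argument is an elimination of the coefficients one at a time.

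\emph{Step 1: pass to $\C$.} Since $z\mapsto F(z,P(z))$ is entire and vanishes on the interval $I$, the identity theorem gives $F(z,P(z))=0$ for every $z\in\C$.

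\emph{Step 2: set up the contradiction.} Write $P(z)=a_dz^d+\dots+a_0$ with $a_d\neq0$ real. Argue by contradiction and let $S=\{n:c_n\neq0\}\neq\emptyset$. Let $n_0$ be the index in $S$ with the largest $\lambda_n$; it is unique because the $\lambda_n$ are distinct.

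\emph{Step 3: choose the ray.} Choose $\omega$ with $|\omega|=1$ and $a_d\omega^d=-\ui|a_d|$. This is possible since $d\geq1$. Put $z=r\omega$ with $r\to+\infty$. Then
\[
\Re\bigl(2\ui\pi\lambda P(r\omega)\bigr)=2\pi\lambda|a_d|r^d+O(r^{d-1}),
\]
because the lower-order terms $a_kr^k\omega^k$, $k<d$, contribute at most $O(r^{d-1})$. Also $|e^{2\ui\pi|n|^sr\omega}|=e^{-2\pi|n|^sr\,\Im\omega}$, which is $e^{O(r)}$.

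\emph{Step 4: isolate $c_{n_0}$.} Divide the identity $F(r\omega,P(r\omega))=0$ by $e^{2\ui\pi(|n_0|^sr\omega+\lambda_{n_0}P(r\omega))}$. For $n\in S\setminus\{n_0\}$ the resulting term has modulus
\[
|c_n|\exp\bigl(2\pi(\lambda_n-\lambda_{n_0})|a_d|r^d+O(r^{d-1})+O(r)\bigr).
\]
Because $d\geq2$ and $\lambda_n-\lambda_{n_0}<0$, the $-r^d$ term dominates both $O(r^{d-1})$ and $O(r)$. So every such term tends to $0$, while the $n_0$ term is the constant $c_{n_0}$. Letting $r\to\infty$ gives $c_{n_0}=0$, contradicting $n_0\in S$. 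Equivalently, one can induct downward on the ordering of the $\lambda_n$ to show all $c_n=0$.

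\emph{Main obstacle.} The key point is that $d\geq2$ is needed so that $r^d$ beats the linear growth $O(r)$ coming from the $|n|^sz$ part. This is exactly why affine curves fail, since for them only the sums $|n|^s+\lambda_n$ matter. The remaining bookkeeping is to make sure the $O(r^{d-1})$ lower-order terms of $P$ do not spoil the domination. This holds because, after dividing by the $n_0$ term, only the differences $(\lambda_n-\lambda_{n_0})P(z)$ appear.
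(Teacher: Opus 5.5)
Your proof is correct, but it takes a different route from the paper's proof of this statement, which is the appendix argument.

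\textbf{The paper's route.} The paper first treats monomials using the ray $\omega=e^{\ui\pi/d}$. For a general $P=\kappa Q$, Lemma~\ref{lem:polynomial-fiddling} uses the intermediate value and implicit function theorems to build a curve $\rho(r)=re^{\ui\theta(r)}$ with $\theta(r)\in\bigl(\pi/(4d),3\pi/(4d)\bigr)$. Along this curve $Q(\rho(r))$ is real and $|Q(\rho(r))|\to\infty$. The spatial factors $e^{2\ui\pi\lambda_nP}$ are therefore unimodular there. The separation comes instead from the time factors $e^{2\ui\pi|n|^s\rho(r)}$, which decay faster for larger $|n|$. The coefficients are eliminated by induction on $|n|$. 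Each pair $c_{\pm n}$ needs an extra step, based on $\lambda_n\neq\lambda_{-n}$ and on $|Q(\rho(r))|\to\infty$ together with continuity.

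\textbf{Your route.} You do the opposite. On a single straight ray you make the leading term of $P$ purely imaginary. The spatial factors then carry the separation at scale $e^{2\pi\lambda|a_d|r^d}$, which swamps the $e^{O(r)}$ time factors because $d\geq2$. A single limit kills the coefficient with the largest $\lambda_n$. This is essentially the mechanism of the paper's main-text proof of Theorem~\ref{thm:merhol}(ii), which picks points where $\ui\gamma$ is real and large and an index $n_0$ maximizing $\lambda_n$. You carry it out directly on an explicit ray rather than through $z\mapsto 1/z$.

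\textbf{Comparison.} Your argument is shorter. It needs no auxiliary curve lemma, no separate monomial case and no pair-by-pair step. It also uses nothing about the time frequencies $|n|^s$, so it covers $s=0$ as well. The paper's induction on $|n|$ needs $|n|^s$ strictly increasing in $|n|$, so it breaks down at $s=0$. Your argument also works verbatim for complex-coefficient polynomials. The paper's route puts the separation on the dispersive exponentials, mirroring its monomial example, at the price of the curve construction and the pairwise argument.
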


The remaining of this section is devoted to the proof of the theorem.
Since monomial curves are already treated in
Example~\ref{ex:monomials} we may assume for the proof that our
polynomial $P$ is {\em not} a monomial.  This is what we do, both in
the auxiliary lemma and the subsequent proof of the theorem.

\begin{lemma}\label{lem:polynomial-fiddling}
  Let $Q$ be a monic polynomial of degree $d\ge 2$, with which is not a
  monomial. Then there exists $R>0$ and a $\cc^\infty$-function
  \[
    \theta: (R, \infty)   \to  \left(\frac{\pi}{4d}, \frac{3\pi}{4d}\right)
  \]
  such that $Q( r e^{\ui \theta(r)} ) \in \RR$ and
  $\lim\limits_{r>a, r\to \infty} |Q( r e^{\ui \theta(r)} )| = {+}\infty$.
\end{lemma}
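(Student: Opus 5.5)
The plan is a perturbative implicit-function argument in polar coordinates, run on the sector $\left(\frac{\pi}{4d},\frac{3\pi}{4d}\right)$ named in the statement.

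\textbf{Step 1 (normalization).} Write $Q(z)=z^d+\sum_{k<d}q_kz^k$ and set $z=re^{\ui\theta}$. Then
\[
Q(re^{\ui\theta})=r^d\bigl(e^{\ui d\theta}+E(r,\theta)\bigr),\qquad E(r,\theta)=\sum_{k<d}q_kr^{k-d}e^{\ui k\theta},
\]
where $E$ and its $\theta$-derivative are $O(1/r)$ uniformly in $\theta$. The condition $Q(re^{\ui\theta})\in\R$ is equivalent to
\[
G(r,\theta):=\sin(d\theta)+\Im E(r,\theta)=0 .
\]

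\textbf{Step 2 (implicit function theorem).} At $r=\infty$, i.e. $E\equiv0$, the zeros of $G$ are $\theta=k\pi/d$. At each such zero $\partial_\theta G=\pm d\neq0$. For $r>R$ with $R$ large, a quantitative implicit function theorem, or the intermediate value theorem combined with the monotonicity of $G$ in $\theta$ coming from $|\partial_\theta E|=O(1/r)$, produces a unique $\mathcal C^\infty$ branch $\theta(r)=k\pi/d+O(1/r)$. Smoothness in $r$ follows because $G$ is real-analytic in $(r,\theta)$.

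\textbf{Step 3 (growth).} Along this branch,
\[
|Q(re^{\ui\theta(r)})|=r^d\,|1+O(1/r)|\to+\infty .
\]
This gives the required limit.

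\textbf{Main obstacle: the location of $\theta(r)$.} The branches produced in Step 2 are all asymptotic to multiples of $\pi/d$. None of these multiples lies in $\left(\frac{\pi}{4d},\frac{3\pi}{4d}\right)$. Worse, on that sector $\sin(d\theta)\ge\frac{\sqrt2}{2}$, so
\[
\Im Q(re^{\ui\theta})\ge \tfrac{\sqrt2}{2}r^d-Cr^{d-1}>0
\]
for $r$ large. So, with the sector exactly as stated, no real values occur once $R$ is large, and the claim as worded cannot be established along these lines.

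I would therefore check first how the lemma is used downstream. The closing remarks of the monomial example suggest what is needed: a ray along which the exponent factor $e^{2\ui\pi\lambda Q}$ stays unimodular while $e^{2\ui\pi|n|^s z}$ decays. That only requires $\theta(r)$ to stay in a fixed closed subinterval of $(0,\pi)$ together with $Q$ real. Under that requirement, taking the branch at $k=1$, so $\theta(r)\to\pi/d$, works for $d\ge2$, since then $\pi/d\in(0,\pi)$.

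The non-monomial hypothesis plays no role in this argument; it only matters when one wants $\theta(r)$ to be non-constant. I would therefore present Steps 1–3 with the branch at $\pi/d$ and flag that the sector in the statement needs to be adjusted accordingly.
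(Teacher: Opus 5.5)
Your diagnosis is correct, and it is exactly where the paper's own proof breaks. Your computation does more than show the claim is out of reach of this method: it shows the lemma as stated is false. For $\theta\in\left(\frac{\pi}{4d},\frac{3\pi}{4d}\right)$ one has $\sin(d\theta)>\frac{\sqrt2}{2}$, so $\Im Q(re^{\ui\theta})\ge\frac{\sqrt2}{2}r^d-M(r)>0$ for large $r$, where $M(r)=\sum_{k<d}|a_k|r^k$. Hence no such $\theta(r)$ exists, for any monic $Q$ of degree $d\ge2$.

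The paper applies the intermediate value theorem between $\frac{\pi}{4d}$ and $\frac{3\pi}{4d}$. It asserts $\Im Q(re^{3\ui\pi/(4d)})\le-(\sqrt2-1)M(r)$. But $\sin\frac{3\pi}{4}=+\frac{\sqrt2}{2}$, so ``the same argument'' gives $\ge(\sqrt2-1)M(r)$ at both endpoints, and there is no sign change.

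Your repair is the right one, and it is essentially the paper's intended scheme (sign change plus implicit function theorem), correctly located. Take the window $\left[\frac{3\pi}{4d},\frac{5\pi}{4d}\right]$. There $\sin(d\theta)$ runs from $\frac{\sqrt2}{2}$ to $-\frac{\sqrt2}{2}$ and $\cos(d\theta)\le-\frac{\sqrt2}{2}$. So for large $r$ your $G$ has a unique zero in this window, depending smoothly on $r$.

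This corrected version is all that the proof of Theorem~\ref{thm:pol} uses. Since $d\ge2$, the window lies inside $\left[\frac{\pi}{4d},\pi-\frac{\pi}{4d}\right]$. Hence $\sin\theta(r)\ge\sin\frac{\pi}{4d}$, and the decay estimate used there survives verbatim. The branch is also just a perturbation of the ray $e^{\ui\pi/d}$ used for monomials in Example~\ref{ex:monomials}.

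Your formulation through $G=\Im\bigl(r^{-d}Q(re^{\ui\theta})\bigr)$ is cleaner than the paper's on two points.
\begin{itemize}
\item Strict monotonicity of $G$ in $\theta$ on the window gives uniqueness of the zero, and hence a globally defined smooth branch at once. The paper instead bounds only the modulus of the complex derivative $\partial_\theta Q(re^{\ui\theta})$. That bound is not by itself the nondegeneracy condition $\partial_\theta\Im Q\neq0$ which the implicit function theorem requires.
\item As you note, the non-monomial hypothesis is superfluous. In the paper it is needed only because the lower bounds are written as multiples of $M(r)$, which vanishes identically for monomials.
\end{itemize}
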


\begin{proof}[Proof of the Lemma]
  Write $Q(x) = \sum\limits_{k=0}^d a_k x^k$ with $a_d=1$ and define
  $\dst M(r) := \sum_{k=0}^{d-1} |a_k| r^k$. Observe that once we find
  a function $\theta$ that satisfies the first claim, the second one
  follows automatically.  Indeed, for $r\geq 1$,
  $M(r)\leq M(1)r^{d-1}$ so that
  \[
    |Q( r e^{\ui \theta(r)} )| \geq a_dr^d-M(r)\geq
    a_dr^d-M(1)r^{d-1}\to+\infty.
  \]
  Now take a sufficiently large $R>0$ to guarantee that
  \[
    |a_d| r^d > 2 M(r)
  \]
  for all $r>R$.  Using this estimate,
  \[
    \Im \bigl( Q( r e^{\ui \frac{\pi}{4d}} )\bigr) \ge \Im( r^d a_d e^{\ui
      \pi/4} ) - \sum_{k=0}^{d-1} |a_k| r^k \ge (\sqrt{2} - 1) M(r)>0.
  \]
  By the same argument
  $\Im \bigl( Q( r e^{\ui \frac{3\pi}{4d}} )\bigr) \le {-}(\sqrt{2}-1)
  M(r) < 0$.  Applying the Intermediate Value Theorem to
  $\theta\to \Im ( P( r e^{\ui \theta} ))$, we obtain for every $r>R$ at
  least one $\theta_r$ such that $Q(r e^{\ui \theta_r}) \in \RR$.

  \proofskip This argument does not yet give any information on the
  dependence of $\theta_r$ on $r$. But that can be fixed: consider a
  two-variable function $f: (r, \theta) \mapsto Q(r e^{\ui\theta})$ and
  observe that
  \[
    \frac{\partial}{\partial \theta} f(r, \theta) = \ui \sum_{k=0}^d k
    a_k r^{k} e^{\ui k \theta}.
  \]
  Now we have by the choice of $R$ for all $r>R$
  \[
    d |a_d| r^d > 2 d M(r) \ge \sum_{k=0}^{d-1} k |a_k| r^k + d M(r)
  \]
  which implies by lower triangle inequality
  \[
    \left|\frac{\partial}{\partial \theta} f(r, \theta)\right| \ge d
    |a_d| r^d - \sum_{k=0}^{d-1} k |a_k| r^k \ge d M(r) >0.
  \]
  Now the Implicit Function Theorem allows to resolve locally $\theta$
  as a function of $r$. But a little more can be said: the uniform
  minorization of the partial derivative gives actually a uniform size
  of the existence interval around any fixed $r>R$ (see
  e.g. \cite[Exercise~12,~page~448]{Fitzpatrick:advancedcalc}).  This
  allows to continue the function $\theta$ on an interval of the form
  $(a, +\infty)$. Since $f$ is $\cc^\infty$, so is $\theta$ by the
  usual implicit derivation argument.
\end{proof}

\begin{proof}[Proof Theorem~\ref{thm:pol}]
  We assume that there is a {\em real} polynomial $P$ such that, for
  all $t$ in some interval, $F\bigl(t, P(t)\bigr) = 0$. As we have
  already seen that, if $P$ is a monomial, this implies that $F=0$, we
  now assume that $P$ is not a monomial.  We further write
  $P(z)=\kappa \,Q(z)$ with $Q$ a monic polynomial and $\kappa\in\R$.
  We take
  $\theta: (R, \infty) \to \left(\frac{\pi}{4d},
    \frac{3\pi}{4d}\right)$ the function associated with $Q$ from
  Lemma~\ref{lem:polynomial-fiddling}.

  \proofskip As $z\to F\bigl(z, P(z)\bigr)$ is holomorphic, this
  implies that $F\bigl(z, P(z)\bigr) = 0$ for all $z\in\C$.  We then
  consider the curve in the complex plane given by
  \[
    \rho(r)=re^{\ui\theta(r)}
  \]
  so that
  \begin{align}
   0 = & \; F\Bigl( \rho(r) ,  P\bigl(\rho(r)\bigr) \Bigr)\notag\\
     = & \; c_0  e^{2\ui\pi\, \lambda_0 P(\rho(r))}
         + \sum_{n=1}^N \left( c_n e^{2\ui\pi\, \lambda_n P\bigl(\rho(r)\bigr)} + c_{-n} e^{2\ui\pi\,\lambda_{-n} P\bigl(\rho(r)\bigr)} \right) e^{2\ui\pi\, |n|^s \rho(r)}\notag\\
     = & \; c_0  e^{\ui \mu_0 Q(\rho(r))}
         + \sum_{n=1}^N \left( c_n e^{\ui \mu_n Q\bigl(\rho(r)\bigr)} + c_{-n} e^{\ui \mu_{-n} Q\bigl(\rho(r)\bigr)} \right) e^{2\ui\pi\, |n|^s \rho(r)}\label{eq:genpol}
  \end{align}
  where we have set $\mu_k=2\pi\kappa\,\lambda_k$.  Observe that
  \begin{equation}
       | e^{\ui  |n|^s \rho(r)} | = e^{-|n|^s r\Im(e^{\ui \theta(r)})} = e^{- |n|^s r \sin(\theta(r))}
      \le e^{-|n|^s r \sin(\frac{ \pi}{4d} )}   \label{eq:tend-to-0}
    \end{equation}
    tends to zero as $r \to{+}\infty$, while $Q(\rho(r))$ is
    real. Therefore the sum in \eqref{eq:genpol} converges to zero,
    forcing the first term to follow. However, this term is of
    constant modulus $|c_0|$, so that $c_0=0$.
  
    \proofskip Now assume towards a contradiction that the $c_n$'s are
    not all $0$ and let $n_0$ be the smallest index $n$ such that
    $|c_n|+|c_{-n}|\not=0$.  Then \eqref{eq:genpol} reads
    \begin{align*}
    0 = & \;  c_{n_0} e^{\ui \mu_{n_0} Q\bigl(\rho(r)\bigr)} + c_{-n_0} e^{\ui \mu_{-n_0} Q\bigl(\rho(r)\bigr)}   \\
        &  +   \sum_{n=n_0+1}^N ( c_n e^{\ui \mu_n Q\bigl(\rho(r)\bigr)}
           + c_{-n} e^{\ui \mu_{-n} Q\bigl(\rho(r)\bigr)} ) e^{\ui  (|n|^s-|n_0|^s) \gamma(r)}.
    \end{align*}
    As previously,
    \[
      |e^{\ui  (|n|^\alpha-|n_0|^\alpha) \gamma(r)}|\leq
      e^{-(|n|^\alpha-|n_0|^\alpha) r \sin(\frac{ \pi}{4d} )}\to 0
    \]
    when $n\geq n_0+1$ and $r\to +\infty$. As $ Q\bigl(\rho(r)\bigr)$
    is real valued, this implies that the sum again goes to $0$ so
    that
    \[
      \lim_{r\to \infty} \left(c_{n_0} e^{\ui \mu_{n_0}
          Q\bigl(\rho(r)\bigr)} + c_{-n_0} e^{\ui \mu_{-n_0}
          Q\bigl(\rho(r)\bigr)} \right) = 0.
    \]
    Exploiting once more that the exponential arguments are purely
    imaginary, the lower triangle inequality reveals
    \[
      \left| c_{n_0} e^{\ui \mu_{n_0} Q\bigl(\rho(r)\bigr)} + c_{-n_0}
        e^{\ui \mu_{-n_0} Q\bigl(\rho(r)\bigr)} \right| \ge
      \bigl||c_{n_0}|-|c_{-n_0}|\bigr|
    \]
    so that that necessarily $|c_{n_0}|=|c_{-n_0}|$. Now write
    $c_{-n_0} = e^{\ui  \vartheta} c_{n_0}\not=0$. Coming back to the
    above limit,
    \[
      \lim_{r\to \infty} |c_{n_0}|\,\left|1
        +e^{\ui \bigl(\vartheta+(\mu_{-n_0}-\mu_{n_0})
          Q\bigl(\rho(r)\bigr)\bigr)} \right| = 0.
    \]
    Let us write
    \[
      \ee=\left\{t\in\R\,:\ \left|1
          +e^{\ui \bigl(\vartheta+(\mu_{-n_0}-\mu_{n_0}) t\bigr)}
        \right|<1 \right\}.
    \]
    Note that, as $\mu_{n_0}\not=\mu_{-n_0}$, $\ee_\veps$ is a
    periodic union of intervals {\it i.e.} it is of the form
    $\ee=I_0+\dfrac{2\pi}{\mu_{-n_0}-\mu_{n_0}}\Z$ with
    $I_0=\left[\dfrac{-\vartheta-\pi/3}{\mu_{-n_0}-\mu_{n_0}},\dfrac{-\vartheta+\pi/3}{\mu_{-n_0}-\mu_{n_0}}\right]$,
    an interval of length $<\dfrac{2\pi}{3(\mu_{-n_0}-\mu_{n_0})}$.
    We now that, for $r$ large enough,
    $Q\bigl(\rho(r)\bigr)\in\ee$. But $Q\bigl(\rho(r)\bigr)$ is
    continuous so, once $r$ is large enough, $Q\bigl(\rho(r)\bigr)$
    has to take its values in a fixed interval
    $I_0+\dfrac{2\pi}{\mu_{-n_0}-\mu_{n_0}}k_0$, which contradicts
    $ \bigl|Q\bigl(\rho(r)\bigr)\bigr|\to+\infty$.  We have thus
    reached the desired contradiction and the theorem is proven.
\end{proof}



\end{document}